\documentclass[11pt, twoside]{article}

\usepackage[english]{babel}

\usepackage{amssymb}
\usepackage{mathrsfs}
\usepackage{amsmath}
\usepackage{amsthm}
\usepackage{amsfonts}
\usepackage{latexsym}
\usepackage{indentfirst}
\usepackage{color}
\usepackage{txfonts}
\usepackage{enumerate}

\usepackage[colorlinks=true,
linkcolor=blue,
citecolor=red,
urlcolor=magenta,
backref=page
]{hyperref}

\usepackage{txfonts}
\usepackage{anysize}

\allowdisplaybreaks

\newtheorem{theorem}{Theorem}[section]
\newtheorem{lemma}[theorem]{Lemma}

\newtheorem{proposition}[theorem]{Proposition}

\theoremstyle{definition}
\newtheorem{remark}[theorem]{Remark}

\newtheorem{definition}[theorem]{Definition}
\newcounter{assum}

\renewcommand{\appendix}{\par
\setcounter{section}{0}%
\setcounter{subsection}{0}%
\setcounter{subsubsection}{0}%
\gdef\thesection{\@Alph\c@section}%
\gdef\thesubsection{\@Alph\c@section.\@arabic\c@subsection}%
\gdef\theHsection{\@Alph\c@section.}%
\gdef\theHsubsection{\@Alph\c@section.\@arabic\c@subsection}%
\csname appendixmore\endcsname
}

\numberwithin{equation}{section}

\begin{document}
\title{\bf\Large
$\Gamma$-Convergence of Weak-Type Nonlocal  Functionals on Bounded
Domains\footnotetext{\hspace{-0.35cm} 2020 {\it
Mathematics Subject Classification}.
Primary 49J45; Secondary 46E35, 26B30, 26D10.
\endgraf {\it Key words and phrases.}
Gamma-convergence, nonlocal functional, Sobolev norm, bounded variation.
\endgraf This project is partially supported by the National
Natural Science Foundation of China (Grant Nos. 12431006, 12371093, and 12501118),
the Natural Science Foundation of Fujian Province (Grant No. 2026J008197),
the Beijing Natural Science Foundation (Grant No. 1262011),
the Fundamental Research Funds for the Central Universities
(Grant No. 2253200028), and Longyuan Young Talents of Gansu Province.}}
\author{Xiaosheng Lin, Dachun Yang, Sibei Yang, Wen Yuan
and Yangyang Zhang}
\date{\today}
\maketitle

\vspace{-0.7cm}

\begin{center}
\begin{minipage}{13cm}
{\small {\bf Abstract.}\quad
 Let $N\ge1$, $p\in[1,\infty)$, $\gamma\in(0,\infty)$, and
 $\Omega\subset\mathbb R^N$ be a bounded open interval
when $N=1$ or a bounded Lipschitz domain when $N\ge2$.
For any $\lambda\in(0,\infty)$ and any measurable function $u$,
consider the weak-type nonlocal functional
\begin{align*}
 G_{\lambda,p,\gamma}(u;\Omega)
 :=\lambda\iint_{\Omega\times\Omega}
 \mathbf 1_{\left\{(x,y)\in\Omega\times\Omega:\ x\neq y,\
 \frac{|u(x)-u(y)|^p}{|x-y|^{p+\gamma}}\geq\lambda\right\}}
 |x-y|^{\gamma-N}\,dx\,dy.
\end{align*}
In this article, we prove that, as $\lambda\to\infty$,
the family $G_{\lambda,p,\gamma}$ converges,
in the sense of $\Gamma$-convergence
in $L^p(\Omega)$, to the functional
\begin{align*}
 \Psi_{p,\gamma}^{\mathrm{cell}}(u;\Omega):=
\begin{cases}
 C_{N,p,\gamma}^{\mathrm{cell}}\displaystyle\int_\Omega|\nabla u|^p\,dx,
&p\in(1,\infty)\ \hbox{and}\ u\in W^{1,p}(\Omega),\\[2mm]
 C_{N,1,\gamma}^{\mathrm{cell}}|Du|(\Omega),
&p=1\ \hbox{and}\ u\in BV(\Omega),\\[1mm]
\infty,&\hbox{otherwise},
\end{cases}
\end{align*}
 where the positive constants $C_{N,p,\gamma}^{\mathrm{cell}}$
are independent of $\Omega$ and characterized by a cell formula. This
gives an affirmative answer to the problem posed by Brezis [Open Problem~9.3,
Rend. Lincei Mat. Appl. 2023].}
\end{minipage}
\end{center}

\vspace{0.1cm}

\tableofcontents
\section{Introduction}
Let $N\ge1$, $p\in [1,\infty)$, and $\Omega\subset \mathbb{R}^N$ be an open set.
Recall that the \emph{homogeneous Sobolev space} $\dot{W}^{1,p}(\Omega)$ is defined by setting
\begin{align*}
\dot{W}^{1,p}(\Omega):=\left\{f\in L^p_{\mathrm{loc}}(\Omega):\ |\nabla f|\in L^p(\Omega)\right\},
\end{align*}
where the \emph{notation $L^p_{\mathrm{loc}}(\Omega)$} denotes the set
of all locally $p$-integrable functions on
$\Omega$ and \emph{notation $\nabla f$} denotes the \emph{distributional gradient} of $f$.
Moreover, the \emph{nonhomogeneous Sobolev space} $W^{1,p}(\Omega)$ is defined as
$$W^{1,p}(\Omega):= L^p(\Omega)\cap \dot{W}^{1,p}(\Omega).$$
Meanwhile, denote by \emph{notation $\mathcal M(\Omega;\mathbb R^N)$}
the space of all finite $\mathbb R^N$-valued Radon
measures on $\Omega$. Then the homogeneous and the nonhomogeneous \emph{bounded-variation spaces} $\dot{BV}(\Omega)$
and $BV(\Omega)$ are respectively defined by setting
$$
\dot{BV}(\Omega):=\left\{u\in L^1_{\mathrm{loc}}(\Omega):Du\in\mathcal M(\Omega;\mathbb R^N)\right\}
$$
and
$$ BV(\Omega):=\dot{BV}(\Omega)\cap L^1(\Omega)$$
(see, for example, \cite{afp00}). Here, the \emph{notation $Du$}
denotes the distributional derivative of $u$,
and \emph{notation $|Du|$} denotes its total variation measure.

In the seminal articles \cite{bbm01, b02}, Bourgain, Brezis, and Mironescu
showed that,
for any $f\in W^{1,p}(\mathbb{R}^N)$,
\begin{align*}
\lim_{s\to 1^-}(1-s)\int_{\mathbb{R}^N}\int_{\mathbb{R}^N}\frac{|f(x)-f(y)|^p}{|x-y|^{N+sp}}\,dx\,dy
=\frac{K_{N,p}}{p}\|\,|\nabla f|\,\|_{L^p(\mathbb{R}^N)}^p,
\end{align*}
which is nowadays called the \emph{Bourgain--Brezis--Mironescu} (BBM) \emph{formula}. Here
\begin{align}\label{eq:gpgeo}
 K_{N,p}:=\int_{\mathbb S^{N-1}}|\omega\cdot e_N|^p\,d\mathcal H^{N-1}(\omega),
\end{align}
where $\mathbb S^{N-1}$ and $\mathcal H^{N-1}$ respectively
denote the \emph{unit sphere} in $\mathbb R^{N}$ and
the \emph{surface measure} on $\mathbb S^{N-1}$ and where
$e_N:=(0,\ldots,0,1)\in\mathbb S^{N-1}$.
Since then, the BBM formula and its various generalized versions have attracted a lot of attention
(see, for example, \cite{bn18,dgpyyz24,dlyyz23,dlyyz22,dm22,dm23,dssvy23,kmx05,zyy23}).
At the endpoint case $s=1$, Brezis et al. \cite{bvy21} proved that, for any
$f\in C_{\rm c}^\infty(\mathbb R^N)$ (the space of all infinitely
differentiable functions on $\mathbb R^N$ with compact support),
\begin{align*}
&\lim_{\lambda\to\infty}\lambda^p\mathcal L^{2N}
\left(\left\{(x,y)\in\mathbb R^N\times\mathbb R^N:
x\neq y,\ \frac{|f(x)-f(y)|}{|x-y|^{1+N/p}}\geq\lambda\right\}\right)
=\frac{K_{N,p}}{N}\|\,|\nabla f|\,\|_{L^p(\mathbb R^N)}^p,
\end{align*}
where $\mathcal L^{2N}$ denotes the Lebesgue measure on ${\mathbb R}^{2N}$.
For any given $\lambda\in(0,\infty)$, $p\in[1,\infty)$,
and $\gamma\in\mathbb{R}\backslash\{0\}$
and for any measurable function $u:\Omega\to\mathbb R$, let
\begin{align}\label{eq:lv}
F_{\lambda,p,\gamma}(u;\Omega):=\lambda^p\iint_{ \Omega\times\Omega}
\mathbf{1}_{\left\{(x,y)\in\Omega\times\Omega:\ x\neq y,\
\frac{|u(x)-u(y)|}{|x-y|^{1+\gamma/p}}>\lambda\right\}}
|x-y|^{\gamma-N}\,dx\,dy .
\end{align}
When $\Omega:=\mathbb R^N$, Brezis et al. \cite{bsvy24}
studied this family in a more general framework. Precisely, they obtained
sharp asymptotic estimates as $\lambda\to\infty$ when $\gamma\in(0,\infty)$ and as
$\lambda\downarrow0$ when $\gamma\in(-\infty,0)$ and, among other consequences, derived
characterizations of homogeneous Sobolev spaces.

These results led to a natural variational question.  In
\cite[Section~7C]{bsvy24}, Brezis et al. asked whether, for any given
$p\in[1,\infty)$ and $\gamma\in\mathbb{R}\backslash\{0\}$, there exists a positive constant
$c:=c(p,\gamma,N)$, depending only on $p$, $\gamma$, and $N$, such that the family
$F_{\lambda,p,\gamma}(\cdot;\mathbb{R}^N)$ converges, in the sense of $\Gamma$-convergence in $L^1_{\mathrm{loc}}(\mathbb R^N)$,
to the functional
\begin{align*}
\Psi_{\ast,c}(u):=\begin{cases}
 c\displaystyle\int_{\mathbb R^N}|\nabla u|^p\,dx,
&p\in(1,\infty)\ \hbox{and}\ u\in\dot{W}^{1,p}(\mathbb R^N),\\[2mm]
c|Du|(\mathbb R^N),
&p=1\ \hbox{and}\ u\in\dot{BV}(\mathbb R^N),\\[1mm]
\infty,&\hbox{otherwise},
\end{cases}
\end{align*}
as $\lambda\to\infty$ when $\gamma\in(0,\infty)$ and as $\lambda\downarrow0$ when
$\gamma\in(-\infty,0)$.  The case $p=1$ is especially delicate because the pointwise
limits on $\dot{W}^{1,1}(\mathbb R^N)$ and on $\dot{BV}(\mathbb R^N)$ may
differ. Furthermore, Brezis subsequently formulated in \cite[Open Problem~9.3]{b23} a
bounded-domain version of the case $\gamma=N$ of the above convergence of the family
$F_{\lambda,p,\gamma}(\cdot;\Omega)$ in \eqref{eq:lv}.
Let $\gamma\in (0,\infty)$
and $\Omega\subset\mathbb{R}^N$
be a bounded smooth domain.
For any $\lambda\in (0,\infty)$ and any  measurable function $u:\Omega\to\mathbb R$,
let
\begin{align}\label{eq:Ggamma}
G_{\lambda,p,\gamma}(u;\Omega):=\lambda\iint_{\Omega\times\Omega}
\mathbf 1_{\left\{(x,y)\in\Omega\times\Omega:\ x\neq y,\
\frac{|u(x)-u(y)|^p}{|x-y|^{p+\gamma}}\geq\lambda\right\}}
|x-y|^{\gamma-N}\,dx\,dy.
\end{align}
Brezis \cite[Open Problem~9.3]{b23} asked whether the family
$G_{\lambda,p,N}(\cdot;\Omega)$ converges as
$\lambda\to\infty$, in the sense of $\Gamma$-convergence
in $L^p(\Omega)$, to the functional
\begin{align}\label{e1.1}
\Psi_p(u;\Omega):=\begin{cases}
c_p\displaystyle\int_\Omega|\nabla u|^p\,dx,
&p\in(1,\infty)\ \hbox{and}\ u\in W^{1,p}(\Omega),\\[2mm]
c_1|Du|(\Omega),
&p=1\ \hbox{and}\ u\in BV(\Omega),\\[1mm]
\infty,&\hbox{otherwise}
\end{cases}
\end{align}
for some positive constants $c_p$ when $p\in(1,\infty)$ and $c_1$ when $p=1$.

Recall that, for the classical BBM energies, Ponce \cite{p04} showed that the corresponding
functionals $\Gamma$-converge with respect to the strong $L^p$ topology and that
the $\Gamma$-limit has the same constant as the pointwise BBM limit.
A different phenomenon occurs when
$\gamma:=-p$ in \eqref{eq:lv}: Nguyen \cite{n07,n11} proved that, as
$\lambda\downarrow0$, the family $F_{\lambda,p,-p}(\cdot;\mathbb{R}^N)$
$\Gamma$-converges in $L^p(\mathbb R^N)$ to a positive multiple of the
Sobolev energy when $p\in(1,\infty)$, or of the total variation when $p=1$,
and this positive multiplicative constant is strictly smaller than the pointwise-limit constant.
The positive multiplicative constant in this $\Gamma$-limit was given by an implicit cell
formula in \cite[formula~(1.3)]{n11}; its explicit value was later computed by
Antonucci et al. \cite[Theorem~1.1]{agmp20}.
Related nonlocal nonconvex approximations
and  their connections with image processing
 were further studied by Brezis and Nguyen
\cite{bn18,bn20ccm,bn20na}.
More recently, Nguyen
\cite[Theorem~29 and Remark~30]{n25} showed the following result:
if $p\in[1,\infty)$ and $\gamma\in(-1,0)$, then the family
$F_{\lambda,p,\gamma}(\cdot;\mathbb{R}^N)$ $\Gamma$-converges in
$L^p(\mathbb R^N)$, as $\lambda\downarrow0$, to the zero functional.
 For $\gamma\in(0,\infty)$, Gobbino and Picenni \cite{gp25}
established a positive lower bound for the $\Gamma$-liminf with respect to
$L^1_{\mathrm{loc}}(\mathbb R^N)$ convergence as $\lambda\to\infty$.
In the case $p=1$ and $\gamma\in(0,\infty)$, Lahti and Li \cite{ll26}
established the $\Gamma$-limit of
$F_{\lambda,1,\gamma}(\cdot;\Omega)$ with respect to area-strict convergence.
However, Brezis's bounded-domain $\Gamma$-convergence problem with respect to
the strong $L^p(\Omega)$ topology \cite[Open Problem~9.3]{b23} remains open.

The main result below gives an affirmative answer to Brezis's problem
\cite[Open Problem~9.3]{b23}, extends it to any $\gamma\in(0,\infty)$,
and identifies the positive constants in \eqref{e1.1} by means of the affine
cell constants defined below.

\begin{theorem}\label{thm:main}
Let $N\ge1$, $p\in [1,\infty)$, $\gamma\in(0,\infty)$, and $\Omega\subset\mathbb R^N$ be a bounded open interval
when $N=1$ or a bounded Lipschitz domain when $N\ge2$. Define the functional
$\Psi_{p,\gamma}^{\mathrm{cell}}:L^p(\Omega)\to[0,\infty]$ by setting, for any $u\in L^p(\Omega)$,
\begin{align*}
\Psi_{p,\gamma}^{\mathrm{cell}}(u;\Omega):=
\begin{cases}
C_{N,p,\gamma}^{\mathrm{cell}}\displaystyle\int_\Omega|\nabla u|^p\,dx,
&p\in(1,\infty)\ \hbox{and}\ u\in W^{1,p}(\Omega),\\[2mm]
C_{N,1,\gamma}^{\mathrm{cell}}|Du|(\Omega),
&p=1\ \hbox{and}\ u\in BV(\Omega),\\[1mm]
\infty,&\hbox{otherwise},
\end{cases}
\end{align*}
where the positive constant $C_{N,p,\gamma}^{\mathrm{cell}}$ is the same as in \eqref{eq:cellconst}.
Then the family $G_{\lambda,p,\gamma}(\cdot;\Omega)$ converges, in the sense of
$\Gamma$-convergence in $L^p(\Omega)$, to $\Psi_{p,\gamma}^{\mathrm{cell}}$ as $\lambda\to\infty.$
More precisely, let $\{\lambda_j\}_{j\in\mathbb N}\subset(0,\infty)$ be any sequence satisfying
$\lambda_j\to\infty$.  Then the following statements hold.
\begin{enumerate}[{\rm(i)}]
\item The $\Gamma$-liminf inequality: for any $u\in L^p(\Omega)$ and any
sequence $\{u_j\}_{j\in\mathbb N}\subset L^p(\Omega)$ satisfying
$u_j\to u$ in $L^p(\Omega)$,
\begin{align*}
\liminf_{j\to\infty}G_{\lambda_j,p,\gamma}(u_j;\Omega)\geq\Psi_{p,\gamma}^{\mathrm{cell}}(u;\Omega).
\end{align*}
\item The $\Gamma$-limsup inequality: for any $u\in L^p(\Omega)$, there
exists a sequence $\{u_j\}_{j\in\mathbb N}\subset L^p(\Omega)$ such that
$u_j\to u$ in $L^p(\Omega)$ and
\begin{align*}
\limsup_{j\to\infty}G_{\lambda_j,p,\gamma}(u_j;\Omega)
\leq\Psi_{p,\gamma}^{\mathrm{cell}}(u;\Omega).
\end{align*}
Such a sequence is called a \emph{recovery sequence} for $u$.
\end{enumerate}
\end{theorem}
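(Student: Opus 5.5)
We prove the theorem by the blow-up method of Fonseca and Müller, combined with a localization argument for the nonlocal energies, with the cell constant \eqref{eq:cellconst} acting as the bridge. The cell constant is defined through affine data on a unit cell: roughly, $C_{N,p,\gamma}^{\mathrm{cell}}$ is the $\liminf$, as $\lambda\to\infty$, of the infimum of $G_{\lambda,p,\gamma}(v;Q)$ over $v$ close in $L^p(Q)$ to the linear function $x\mapsto e\cdot x$, normalized by $|Q|$. Rotation invariance of the kernel makes this independent of the unit vector $e$. The scaling $u\mapsto tu$ combined with $\lambda\mapsto t^p\lambda$ gives $G_{t^p\lambda,p,\gamma}(tu)=t^pG_{\lambda,p,\gamma}(u)$, so the cell value for slope $\xi$ is $C|\xi|^p$.

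For the $\Gamma$-liminf (i), we may assume $\liminf G_{\lambda_j,p,\gamma}(u_j;\Omega)<\infty$ and pass to a subsequence.

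1. \emph{Compactness.} We show that $u\in W^{1,p}(\Omega)$, or $u\in BV(\Omega)$ when $p=1$. The route is an a priori estimate, presumably established earlier in the paper in the spirit of \cite{gp25,n11}: finite weak-type energy controls a BBM-type quantity at scales $|h|\gtrsim\lambda^{-1/\gamma}$ only. Since $u_j\to u$ strongly, this survives in the limit, and the characterization of Bourgain, Brezis and Mironescu then yields $u\in W^{1,p}$ or $BV$.

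2. \emph{Localization.} Define the measures $\mu_j(A):=\lambda_j\iint_{A\times\Omega}\mathbf 1_{\{\cdots\}}|x-y|^{\gamma-N}$, extract a weak-$*$ limit $\mu$, and show that $d\mu/d\mathcal L^N\ge C^{\mathrm{cell}}|\nabla u|^p$ almost everywhere. For $p=1$ we additionally need $d\mu/d|D^su|\ge C^{\mathrm{cell}}$ at $|D^su|$-a.e.\ point.

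3. \emph{Blow-up.} At a Lebesgue point $x_0$ of $\nabla u$, rescale $v_{j,r}(z):=(u_j(x_0+rz)-u(x_0))/r$ on the unit cube. A change of variables transforms $\lambda_j$ into $\lambda_j r^{\gamma}$, and since $\lambda_j\to\infty$ a diagonal choice $r\to0$ still gives $\lambda_jr^\gamma\to\infty$. Meanwhile $v_{j,r}$ tends in $L^p$ to the linear map $\nabla u(x_0)\cdot z$, so the definition of the cell constant gives the bound.

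4. \emph{Jump and Cantor part ($p=1$).} Here the blow-up limit is a one-dimensional monotone profile rather than an affine map. We must show that the cell constant for such profiles equals the affine one. We first try the one-dimensional reduction by slicing together with the $1$-homogeneity.

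For the $\Gamma$-limsup (ii), by the lower semicontinuity of the $\Gamma$-limsup and density (smooth functions up to the boundary are dense in $W^{1,p}(\Omega)$ for Lipschitz $\Omega$, and strictly dense in $BV$ with Reshetnyak continuity of $|Du|$) it suffices to treat piecewise affine $u$ on a fine triangulation, or $u\in C^1(\overline\Omega)$ approximated by such. On each simplex we paste near-optimal cell competitors. These are periodized at scale $\lambda^{-1/\gamma}$, i.e.\ rescaled copies of almost-minimizers on small cubes tiling the simplex with slope $\nabla u$.

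The main obstacle is controlling the interaction terms. The nonlocal kernel couples distinct cells, distinct simplices, and points near $\partial\Omega$; with kernel $|x-y|^{\gamma-N}$, $\gamma>0$, long-range pairs still contribute. Two facts make these errors vanish:
\begin{enumerate}[(a)]
\item the truncation $|u(x)-u(y)|^p\ge\lambda|x-y|^{p+\gamma}$ forces $|x-y|\lesssim(\|u\|_\infty^p/\lambda)^{1/(p+\gamma)}\to0$ for bounded competitors;
\item pairs straddling cell interfaces at distance $\delta$ form a set whose energy is $O(\delta)$ relative to the bulk.
\end{enumerate}
We would cut off the competitor to the affine function in thin boundary layers, and we would use the Lipschitz property of $\Omega$ to bound the boundary strip contribution. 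Finally, we must check that the cell infimum is attained in the limit uniformly enough to identify the liminf with a limit along $\lambda_j$. We would prove this by a subadditivity argument in the cube size, which shows that $\lambda\mapsto$ the normalized cell infimum actually converges.
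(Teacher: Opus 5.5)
The genuine gap is your Step~4, which is the heart of the $p=1$ lower bound. You need $m_{1,\gamma}(W_q)\ge C^{\mathrm{cell}}_{N,1,\gamma}|Dq|(I_0)$ for \emph{every} nondecreasing $q$: step profiles at jump points and arbitrary monotone profiles at Cantor points. Only ``$\ge$'' is required, not equality. ``Slicing plus $1$-homogeneity'' cannot give this, for three reasons.
\begin{enumerate}[(a)]
\item $C^{\mathrm{cell}}_{N,1,\gamma}=m_{1,\gamma}(\ell)$ is defined by an $N$-dimensional asymptotic cell problem. No one-dimensional or integral-geometric expression for it is known.
\item Slicing arguments, as in Gobbino--Picenni, give lower bounds like $K_{N,1}\log 2/(2^{\gamma+1}-1)$ (Lemma~\ref{lem:gp}). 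These are not known to equal $C^{\mathrm{cell}}_{N,1,\gamma}$.
\item The functional is not invariant under nonlinear reparametrizations of $z_N$, so $q$ cannot be transported to the identity.
\end{enumerate}
Since the constant must match the upper bound exactly, you have to compare the profile problem with the affine one through the definition of $m_{1,\gamma}$ itself.

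The paper does this by a ``reverse gluing''. Normalize $q$ to $\widetilde q$ with $0\le\widetilde q\le 1$ and $|D\widetilde q|(I_0)=1$. Tile $Q_0$ by the $m^N$ cubes $Q_\alpha=c_\alpha+\frac1mQ_0$, and on $Q_\alpha$ place the staircase copy $a_{\alpha_N}+\frac1m\widehat v_j(m(x-c_\alpha))$ of a near-optimal sequence for $W_{\widetilde q}$, at parameter $m^\gamma\widetilde\lambda_j$. By scaling, the cube energies add up to $G_{\widetilde\lambda_j,1,\gamma}(\widehat v_j;Q_0)$. The limit $W_{q_m}$ satisfies $|q_m(t)-t|\le 1/m$, so lower semicontinuity of $m_{1,\gamma}$ (Lemma~\ref{lem:mplsc}) gives $C^{\mathrm{cell}}_{N,1,\gamma}=m_{1,\gamma}(\ell)\le\liminf_{m}m_{1,\gamma}(W_{q_m})\le m_{1,\gamma}(W_{\widetilde q})$.

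Making the cross-cube terms vanish requires three further ingredients. First, near-optimal sequences are modified to equal $W_{\widetilde q}$ near $\partial Q_0$ at no asymptotic cost (Lemma~\ref{lem:replace}, which averages over $K_j\to\infty$ cut-off levels). Second, the energy of $W_{\widetilde q}$ in a collar of $\partial Q_0$ vanishes (Lemma~\ref{lem:collar}). Third, $|DW_{q_m}|$ does not charge the grid skeleton (Lemma~\ref{lem:skeleton}). None of this is in your plan, so as it stands your liminf for $p=1$ is proved only on the absolutely continuous part.

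For the $\Gamma$-limsup, your strategy is the paper's: boundary-modified cell competitors placed on a piecewise affine approximation, then density. However, the interaction control you describe is too weak. Fact~(a) only confines superlevel pairs to distance $\lesssim\lambda^{-1/(p+\gamma)}$. That is much larger than $\lambda^{-1/\gamma}$ and may exceed the cell size. Near-optimal cell sequences also converge only in $L^p$, so a point deep inside a cell can a priori interact with many neighbours; (b) therefore does not follow.

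What is needed is a cell sequence $(\Lambda_j,V_j,\rho_j)$ with $V_j=\ell$ within distance $\rho_j$ of $\partial Q_0$, $\Lambda_j\rho_j^\gamma\to\infty$, and $\|V_j-\ell\|_{L^\infty(Q_0)}^p/(\Lambda_j\rho_j^{p+\gamma})\to0$. The paper obtains this in Proposition~\ref{prop:prepair} via truncation, a Lipschitz cut-off and a diagonal choice. Then, for slopes bounded away from $0$, any pair with one point at distance $\ge\rho_jh$ inside a cell and the other outside that cell lies below the threshold. So every superlevel pair lies in a single cell or in the set where the competitor is affine, whose measure tends to $0$.

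Moreover, cells of size $\sim\lambda^{-1/\gamma}$ leave the rescaled parameter bounded. Take instead $h_j=(|\xi|^p\Lambda_{k_j}/\lambda_j)^{1/\gamma}$ with $\Lambda_{k_j}/\lambda_j\to0$. This makes the rescaled parameter exactly $\Lambda_{k_j}$, so your subadditivity argument for convergence of the normalized cell infimum is unnecessary.

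A minor point: on the axis-parallel cube, independence of the cell constant from the direction $e$ does not follow from rotation invariance alone, since rotating also rotates the cube. Blow up on cubes rotated to align with $\nabla u(x_0)$ instead.
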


\begin{remark}
The pointwise limit of the BSVY-type functionals \eqref{eq:lv}
need not exist for $BV$ functions. More precisely, Lahti
\cite[Example 3.22]{la25} showed that, for any given $\gamma\in (0,\infty)$, there exists
$u\in BV(0,1)$ such that the limit
\begin{align*}
\lim_{\lambda\to\infty}
F_{\lambda,1,\gamma}(u;(0,1))
\end{align*}
does not exist. However, Theorem \ref{thm:main}, together with the
relationships between $F_{\lambda,1,\gamma}$ and $G_{\lambda,1,\gamma}$,
shows that, for every fixed
$\gamma\in(0,\infty)$, the family $F_{\lambda,1,\gamma}(\cdot;\Omega)$
$\Gamma$-converges in $L^1(\Omega)$, as $\lambda\to\infty$, to the functional $\Psi_{1,\gamma}^{\mathrm{cell}}(\cdot;\Omega)$.
Thus, although the pointwise limit $F_{\lambda,1,\gamma}(u;\Omega)$ may fail to
exist for a fixed $u\in BV(\Omega)$, the corresponding $\Gamma$-limit exists
and satisfies
\begin{align*}
\left(
\Gamma\mathop{\!-\!\lim}_{\lambda\to\infty}
F_{\lambda,1,\gamma}(\cdot;\Omega)
\right)(u)
=
C_{N,1,\gamma}^{\mathrm{cell}}|Du|(\Omega).
\end{align*}
For further results on the pointwise asymptotic behavior of
these functionals on $BV$ spaces, we refer to Lahti \cite{la25}
and Picenni \cite{pic24}.
\end{remark}

The proof of Theorem \ref{thm:main} has two main ingredients,
which allow us to identify the exact constant in the $\Gamma$-limit
and to construct a matching recovery sequence.
For the \emph{lower bound}, different from the dyadic discretization 
and the slicing arguments of Gobbino and Picenni \cite{gp25}, 
we compare the energy on small cubes directly with the affine cell constant.
Moreover, when $p\in(1,\infty)$, we show that the normalized blow-ups of a 
Sobolev function at almost every point with nonzero gradient converge to an affine function.
This gives the required lower bound on sufficiently small cubes, 
and a Morse-type covering argument then yields the global estimate.
The main difficulty for the lower bound appears when $p=1$. 
In this case, by using the fine structure of BV functions, we find that 
normalized blow-ups need not be affine and may converge to general 
one-dimensional nondecreasing BV functions.
We therefore prove that the cell energy of every such function is bounded from below by its total variation times the same affine cell constant $C_{N,1,\gamma}^{\mathrm{cell}}$.
For the \emph{upper bound}, the nonlocal nature of the functional
prevents us from directly gluing scaled cell recovery sequences because interactions between different cubes may create additional energy.
To deal with this problem, motivated by Nguyen \cite{n11},
we modify near-minimizing sequences for the affine cell 
constant so that they agree with the affine function near the boundary of the cube,
and then place scaled copies of these modified sequences 
into a continuous piecewise affine approximation.
Combining the lower and the upper bounds yields the desired full $\Gamma$-limit 
with the constant $C_{N,p,\gamma}^{\mathrm{cell}}$.

The remainder of this article is organized as follows.

In Section \ref{sec:tools}, we introduce the cell constant
$C_{N,p,\gamma}^{\mathrm{cell}}$ and establish its basic properties.
In Section \ref{sec:plimsup}, we show the $\Gamma$-limsup inequality.
Specifically, in Subsection \ref{subsec:bdrymod}, we modify
the sequences realizing the cell constant so that they agree with affine functions
near the boundary. In Subsection \ref{subsec:parec}, we use
these modified sequences to construct recovery sequences for continuous
piecewise affine functions. Finally, in Subsection \ref{proofsup}, we obtain
the $\Gamma$-limsup inequality for general Sobolev and BV functions by
approximation. In Section \ref{sec:liminf}, we prove the $\Gamma$-liminf inequality.
Specifically, in Subsection \ref{proofp}, we treat the case $p\in(1,\infty)$.
 In Subsection
\ref{subsec:bvlim}, we study one-dimensional limits of rescaled BV
functions.
In
Subsection \ref{subsec:monobv}, we establish a lower bound for nondecreasing BV
functions of one variable.
Finally, in Subsection \ref{proofone}, we use these results to show
the $\Gamma$-liminf inequality for $p=1$.

At the end of this section, we make some notational conventions. Let
${\mathbb N}:=\{1,2,\ldots\}$ and ${\mathbb Z}_+:={\mathbb N}\cup\{0\}$.
For any $\beta:=(\beta_1,\ldots,\beta_N)\in\mathbb{Z}_+^N:=(\mathbb{Z}_+)^N$,
let $|\beta|:=\beta_1+\cdots+\beta_N$
and, for any $x:=(x_1,\ldots,x_N)$, let
$x^\beta:=x_1^{\beta_1}\cdots x_N^{\beta_N}$ and
$\partial^\beta:=\left(\frac{\partial}{\partial x_1}\right)^{\beta_1}\cdots
\left(\frac{\partial}{\partial x_N}\right)^{\beta_N}$.
We always denote by $C$ a positive constant
which is independent of the main parameters involved,
but it may vary from line to line.
We also use $C_{\alpha,\beta,\ldots}$ to denote a
positive constant depending on the indicated parameters
$\alpha$, $\beta,\ldots$.
The notation $f\lesssim g$ means that $f\leq Cg$.
If $f\lesssim g$ and $g\lesssim f$,
we then write $f\sim g$. If $f\leq Cg$ and $g=h$
or $g\leq h$, we then write $f\lesssim g=h$ or $f\lesssim g\leq h$.
For any $x\in{\mathbb R}^N$ and $r\in(0,\infty)$,
we denote by
$B(x,r):=\{y\in{\mathbb R}^N:\ |y-x|<r\}$
the ball with center $x$ and radius $r$.
We use $\mathbf{1}_E$ to denote the
characteristic function of a measurable set $E\subset{\mathbb R}^N$
and $\mathbf{0}$ to denote the \emph{origin} of ${\mathbb R}^N$.
We denote by $\mathcal L^N$ the $N$-dimensional Lebesgue measure
and by $\mathcal H^k$ the $k$-dimensional Hausdorff measure.
For any $x\in\mathbb{R}^N$ and any nonempty set $E\subset\mathbb{R}^N$, let
$\mathrm{dist}(x,E):=\inf\{|x-y|:y\in E\}$. If
$\mu$ is a Radon measure and $A$ is a Borel set, then
$\mu\llcorner A$ denotes the restriction of $\mu$ to $A$, namely
\begin{align*}
 (\mu\llcorner A)(E):=\mu(E\cap A)
\end{align*}
for every Borel set $E$. If $\mu$ is a positive Radon measure and
$f\in L^1(\mu;\mathbb R^m)$, then $f\mu$ denotes the
$\mathbb R^m$-valued Radon measure defined by
\begin{align*}
 (f\mu)(E):=\int_E f\,d\mu.
\end{align*}
For a finite signed or vector-valued Radon measure $\sigma$, we
denote its total variation measure by $|\sigma|$.
For any given $N\in\mathbb N$, let
$$\mathcal O(N):=\left\{R\ \text{is an}\ N\times N \ \text{matrix}: R^TR=\mathrm{I}_N\right\},
$$
where $R^T$ denotes the transpose matrix of $R$
and $\mathrm{I}_N$ denotes the $N\times N$ identity matrix.
Throughout this article, we always let
\begin{align}\label{eq:ya}
I_0:=\left(-\frac{1}{2},\frac{1}{2}\right)\ \ \text{and}\ \ Q_0:=I_0^N=\left(-\frac{1}{2},\frac{1}{2}\right)^N.
\end{align}
Moreover, for any $x:=(x_1,\ldots,x_N)\in\mathbb{R}^N,$ define
\begin{equation}\label{e1.2}
\ell(x):=x_N.
\end{equation}
For any measurable set $E\subset\mathbb{R}^N$
and any $f\in L^1(E)$, we denote the integral $\int_{E}f(x)\,dx$ simply by
$\int_{E}f\,dx$ and, when $|E|\in(0,\infty)$, we always use the following notation
$$
\fint_Ef\,dx:=\frac{1}{|E|}\int_{E}f(x)\,dx.
$$
Finally, in all proofs we consistently retain the notation
introduced in the original theorem (or related statement).

\section{Preliminary}\label{sec:tools}
In this section, we introduce the cell constant
$C_{N,p,\gamma}^{\mathrm{cell}}$ and establish its basic properties.

\begin{definition}
For any given $p\in[1,\infty)$, $\gamma\in(0,\infty)$, and $u\in L^p(Q_0)$, define
\begin{align}\label{eq:mp}
m_{p,\gamma}(u):=\inf\left\{\liminf_{j\to\infty}G_{\lambda_j,p,\gamma}(u_j;Q_0):
\lambda_j\to\infty,\quad u_j\to u\ \hbox{in }L^p(Q_0)\right\},
\end{align}
where $G_{\lambda_j,p,\gamma}$ is as in \eqref{eq:Ggamma} and $Q_0$ is as in  \eqref{eq:ya}.
Moreover, let $C_{N,p,\gamma}^{\mathrm{cell}}:=m_{p,\gamma}(\ell)$ with $\ell$ being as in
\eqref{e1.2}; i.e.,
\begin{align}\label{eq:cellconst}
C_{N,p,\gamma}^{\mathrm{cell}}=\inf\left\{
\liminf_{j\to\infty}G_{\lambda_j,p,\gamma}(u_j;Q_0):
\lambda_j\to\infty,\quad u_j\to\ell\ \hbox{in }L^p(Q_0)\right\}.
\end{align}
\end{definition}

It is easy to verify that $m_{p,\gamma}$ has the following scaling properties.
\begin{lemma}\label{lem:scaling}
Let $N\in\mathbb{N}$, $p\in[1,\infty)$, $\gamma\in(0,\infty)$, $U\subset\mathbb R^N$ be an open set, and $w:U\to\mathbb R$ be a measurable function. Let $x_0\in\mathbb R^N$, $R\in\mathcal O(N)$, $r\in(0,\infty)$, $a\in\mathbb R\setminus\{0\}$, and $b\in\mathbb R$. Define the function $u:x_0+rRU\to\mathbb R$ by setting,
for any $z\in U$,
\begin{align*}
u(x_0+rRz):=arw(z)+b.
\end{align*}
Then
\begin{align*}
G_{\lambda,p,\gamma}(u;x_0+rRU)=|a|^p r^NG_{\frac{r^\gamma}{|a|^p}\lambda,p,\gamma}(w;U).
\end{align*}
Consequently,
\begin{align}\label{eq:mpscale}
 m_{p,\gamma}(aw+b)=|a|^p m_{p,\gamma}(w).
\end{align}
\end{lemma}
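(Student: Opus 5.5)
The plan is a direct change of variables in the double integral defining $G_{\lambda,p,\gamma}$, followed by passing the resulting identity through the infimum in \eqref{eq:mp}.

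\textbf{Step 1: the identity for $G$.} Substitute $x=x_0+rRz$ and $y=x_0+rRz'$ with $z,z'\in U$. Then $dx\,dy=r^{2N}\,dz\,dz'$, since $|\det(rR)|=r^N$. Because $R$ is orthogonal, $|x-y|=r|z-z'|$, and $x\neq y$ if and only if $z\neq z'$. We also have $u(x)-u(y)=ar\,(w(z)-w(z'))$. Hence
\begin{align*}
\frac{|u(x)-u(y)|^p}{|x-y|^{p+\gamma}}=\frac{|a|^pr^p|w(z)-w(z')|^p}{r^{p+\gamma}|z-z'|^{p+\gamma}}\geq\lambda
\iff\frac{|w(z)-w(z')|^p}{|z-z'|^{p+\gamma}}\geq\frac{r^\gamma}{|a|^p}\lambda,
\end{align*}
and the weight transforms as $|x-y|^{\gamma-N}=r^{\gamma-N}|z-z'|^{\gamma-N}$. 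Collecting the factors gives
$$\lambda\, r^{2N}r^{\gamma-N}=|a|^pr^N\cdot\Big(\frac{r^\gamma}{|a|^p}\lambda\Big).$$
This is exactly the prefactor of $|a|^pr^NG_{\mu,p,\gamma}(w;U)$ with $\mu=r^\gamma\lambda/|a|^p$, which proves the first identity. Measurability of the set being integrated is preserved, since the change of variables is an affine bijection.

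\textbf{Step 2: the consequence \eqref{eq:mpscale}.} Apply Step 1 with $x_0=\mathbf 0$, $R=\mathrm I_N$, $r=1$ and $U=Q_0$. This gives $G_{\lambda,p,\gamma}(aw_j+b;Q_0)=|a|^pG_{\lambda/|a|^p,p,\gamma}(w_j;Q_0)$.

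\emph{Inequality $m_{p,\gamma}(aw+b)\le|a|^pm_{p,\gamma}(w)$.} Take any admissible pair $\lambda_j\to\infty$ and $w_j\to w$ in $L^p(Q_0)$. Set $u_j:=aw_j+b$ and $\lambda_j':=|a|^p\lambda_j$. Then $\lambda_j'\to\infty$ and $u_j\to aw+b$ in $L^p(Q_0)$; the latter uses $b\in L^p(Q_0)$, which holds because $Q_0$ is bounded. Moreover $G_{\lambda_j',p,\gamma}(u_j;Q_0)=|a|^pG_{\lambda_j,p,\gamma}(w_j;Q_0)$. Taking $\liminf$ and then the infimum over admissible pairs gives this inequality.

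\emph{Reverse inequality.} Apply the same argument to $aw+b$ with the inverse map, i.e. with coefficients $1/a$ and $-b/a$. This is legitimate because $a\neq0$. The map $\lambda\mapsto|a|^p\lambda$ is a bijection of sequences tending to infinity, so no admissible sequences are lost in either direction.

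The only point needing care is the bookkeeping of powers of $r$ in Step 1; nothing here is a genuine obstacle. One should also note the case $m_{p,\gamma}(w)=\infty$, where the identity is trivially consistent since both sides are infinite.
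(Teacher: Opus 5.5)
Your proposal is correct and is exactly the routine verification the paper omits (it only says ``It is easy to verify''). The affine change of variables $x=x_0+rRz$, $y=x_0+rRz'$ gives the identity for $G_{\lambda,p,\gamma}$. The bijection $(\lambda_j,w_j)\leftrightarrow(|a|^p\lambda_j,\,aw_j+b)$ between admissible pairs, applied in both directions since $a\neq0$, then yields $m_{p,\gamma}(aw+b)=|a|^p m_{p,\gamma}(w)$.
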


The following lemma proves that $m_{p,\gamma}$ is lower semicontinuous.
\begin{lemma}\label{lem:mplsc}
Let $N\in\mathbb{N}$, $p\in [1,\infty)$, and $\gamma\in(0,\infty).$ The map $m_{p,\gamma}:L^p(Q_0)\to[0,\infty]$ is
lower semicontinuous; i.e., for any sequence $\{w_n\}_{n\in\mathbb{N}}\subset
L^p(Q_0)$ converging to $w$ in $L^p(Q_0)$,
\begin{align}\label{genggao}
m_{p,\gamma}(w)\leq\liminf_{n\to\infty}m_{p,\gamma}(w_n).
\end{align}
\end{lemma}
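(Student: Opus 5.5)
This is the standard diagonal argument showing that a $\Gamma$-lower limit is lower semicontinuous. The quantity $m_{p,\gamma}$ is an infimum over pairs $(\lambda_j,u_j)$ of the $\liminf$ of $G_{\lambda_j,p,\gamma}(u_j;Q_0)$. So it suffices to pass from almost optimal sequences attached to each $w_n$ to a single diagonal sequence attached to $w$.

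First, by passing to a subsequence of $\{w_n\}_{n\in\mathbb N}$ we may assume that $L:=\lim_{n\to\infty}m_{p,\gamma}(w_n)$ exists and equals the $\liminf$ in \eqref{genggao}. If $L=\infty$ there is nothing to prove, so assume $L<\infty$ and, discarding finitely many $n$, that $m_{p,\gamma}(w_n)<\infty$ for all $n$.

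For each $n$, the definition \eqref{eq:mp} gives $\lambda^{(n)}_j\to\infty$ and $u^{(n)}_j\to w_n$ in $L^p(Q_0)$ with
\[
\liminf_{j\to\infty}G_{\lambda^{(n)}_j,p,\gamma}\bigl(u^{(n)}_j;Q_0\bigr)<m_{p,\gamma}(w_n)+\frac1n .
\]
Since this is a $\liminf$, infinitely many indices $j$ satisfy the strict bound, and all sufficiently large $j$ satisfy $\|u^{(n)}_j-w_n\|_{L^p(Q_0)}<1/n$ and $\lambda^{(n)}_j>n$. We can therefore choose one index $j_n$ meeting all three requirements simultaneously.

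Now set $\mu_n:=\lambda^{(n)}_{j_n}$ and $v_n:=u^{(n)}_{j_n}$. Then $\mu_n\to\infty$, and
\[
\|v_n-w\|_{L^p(Q_0)}\le \frac1n+\|w_n-w\|_{L^p(Q_0)}\to0 .
\]
Moreover, $G_{\mu_n,p,\gamma}(v_n;Q_0)<m_{p,\gamma}(w_n)+1/n$. Hence the pair $(\mu_n,v_n)$ is admissible in the definition of $m_{p,\gamma}(w)$, and
\[
m_{p,\gamma}(w)\le\liminf_{n\to\infty}G_{\mu_n,p,\gamma}(v_n;Q_0)\le\lim_{n\to\infty}\Bigl(m_{p,\gamma}(w_n)+\frac1n\Bigr)=L .
\]

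No step is a real obstacle; the only care needed is in selecting $j_n$. A $\liminf$ bound holds only along infinitely many indices, not eventually, so the index must be taken from that infinite set and beyond the threshold where the $L^p$ and $\lambda$ conditions hold. No property of $G_{\lambda,p,\gamma}$ beyond its definition is used, not even Lemma \ref{lem:scaling}.
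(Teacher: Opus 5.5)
Your proposal is correct and is essentially the paper's own argument. Both pass to a subsequence realizing the $\liminf$, pick a near-optimal admissible sequence for each $w_n$, select one index with $\lambda\ge n$, $L^p$-error at most $\frac1n$ and energy at most $m_{p,\gamma}(w_n)+\frac1n$, and use the resulting diagonal pair as a competitor for $m_{p,\gamma}(w)$. The only cosmetic difference is that the paper takes slack $\frac{1}{2n}$ in the $\liminf$ bound, whereas you use a strict inequality, to guarantee infinitely many good indices.
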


\begin{proof}
Let $L:=\liminf_{n\to\infty}m_{p,\gamma}(w_n)\in[0,\infty].$ If $L=\infty$, then
the desired conclusion holds automatically. Assume $L<\infty$.
Passing to a subsequence and relabelling if necessary, we may assume that $\lim_{n\to\infty}m_{p,\gamma}(w_n)= L$ and $m_{p,\gamma}(w_n)<\infty$ for any $n\in\mathbb{N}.$
For any given $n\in\mathbb{N}$, choose an admissible sequence $\{(\lambda_{n,k},v_{n,k})\}_{k\in\mathbb{N}}$ in \eqref{eq:mp} for $w_n$
such that
\begin{align*}
\liminf_{k\to\infty}G_{\lambda_{n,k},p,\gamma}(v_{n,k};Q_0)\leq m_{p,\gamma}(w_n)+\frac{1}{2n}.
\end{align*}
Using this, the convergence of $\lambda_{n,k}$ and $v_{n,k}$, and the definition
of the limit inferior, we conclude that, for any given  $n\in\mathbb N$,
there exists $k(n)\in\mathbb{N}$ sufficiently large such that
$\lambda_{n,k(n)}\geq n,$ $\|v_{n,k(n)}-w_n\|_{L^p(Q_0)}\leq\frac1n,$
and
\begin{align*}
G_{\lambda_{n,k(n)},p,\gamma}(v_{n,k(n)};Q_0)
\leq m_{p,\gamma}(w_n)+\frac1n.
\end{align*}
This implies that \eqref{genggao} holds, which completes the proof of Lemma
\ref{lem:mplsc}.
\end{proof}

The following lemma is a part of  \cite[Theorem~1.1]{gp25}.
\begin{lemma}\label{lem:gp}
Let $N\in\mathbb{N}$, $p\in[1,\infty)$, $\gamma\in(0,\infty)$, and $U\subset\mathbb R^N$ be an
open set. Then, for any $u\in L^1_{\mathrm{loc}}(U)$ and any sequences $\{\lambda_j\}_{j\in\mathbb{N}}$ and
$\{u_j\}_{j\in\mathbb{N}}$ satisfying $\lim_{j\to\infty}\lambda_j=\infty$ and
$\lim_{j\to\infty}u_j=u$ in $L^1_{\mathrm{loc}}(U),$
\begin{align*}
\liminf_{j\to\infty}G_{\lambda_j,p,\gamma}(u_j;U)\geq K_{N,p}\frac{\log 2}{2^{\gamma+1}-1}
\begin{cases}
\displaystyle\int_U|\nabla u|^p\,dx,&p\in(1,\infty)\hbox{ and }u\in\dot W^{1,p}(U),\\[2mm]
|Du|(U),&p=1\hbox{ and }u\in\dot{BV}(U),\\[1mm]
\infty,&\hbox{otherwise},
\end{cases}
\end{align*}
where $K_{N,p}$ is as in \eqref{eq:gpgeo}.
\end{lemma}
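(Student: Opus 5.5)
Since Lemma~\ref{lem:gp} is quoted from \cite[Theorem~1.1]{gp25}, we only indicate how the argument there can be organized. It has three parts: a reduction to one-dimensional slices, a one-dimensional dyadic lower bound on intervals, and a localization that recovers $|\nabla u|^p$ or $|Du|$.

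\emph{Slicing.} Write $y=x+t\omega$ with $\omega\in\mathbb S^{N-1}$ and $t>0$, so that $|x-y|^{\gamma-N}\,dy=t^{\gamma-1}\,dt\,d\mathcal H^{N-1}(\omega)$. For a fixed direction $\omega$, decompose $x=z+s\omega$ with $z\in\omega^\perp$. Then $G_{\lambda,p,\gamma}(u_j;U)$ becomes an integral over $\omega$ and $z$ of one-dimensional energies
\[
\lambda\int\!\!\int \mathbf 1_{\{|v(s)-v(s')|^p\ge\lambda|s-s'|^{p+\gamma}\}}|s-s'|^{\gamma-1}\,ds\,ds',
\]
where $v$ is the slice $v(s):=u_j(z+s\omega)$ on the slice of $U$. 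By Fatou's lemma we pass the $\liminf$ inside the $(\omega,z)$-integral, after extracting a subsequence along which almost every slice converges in $L^1_{\mathrm{loc}}$. It then suffices to prove the one-dimensional estimate with constant $c_\gamma:=\frac{\log2}{2^{\gamma+1}-1}$, namely $\liminf\ge c_\gamma\int|v'|^p$ (respectively $c_\gamma|Dv|$). Integrating the slice bounds in $z$ gives $\int|\partial_\omega u|^p\,dx$ for $p>1$ (and the slicing formula for $|Du|$ when $p=1$). Integrating $|\nabla u\cdot\omega|^p$ over the sphere then produces $K_{N,p}\int|\nabla u|^p$ by rotation invariance. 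When $p=1$ we use instead $\int_{\mathbb S^{N-1}}|\nu\cdot\omega|\,d\mathcal H^{N-1}(\omega)=K_{N,1}$ for the polar density $\nu$ of $Du$. If the one-dimensional bound is finite for almost every slice and direction, the standard slicing characterization also shows that $u\in\dot W^{1,p}$ (resp.\ $\dot{BV}$), which covers the ``$\infty$ otherwise'' alternative.

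\emph{One-dimensional interval estimate (the key step).} Let $v$ be defined on an interval $I$ of length $L$ whose endpoint values (in an averaged sense) differ by $D$. We aim to show that the energy restricted to $I\times I$ is at least $c_\gamma\,D^p/L^{p-1}$, up to errors that vanish as $\lambda\to\infty$. We would use a dyadic chain argument. Fix a step $t$ and a base point $s$, and consider the chain $s,s+t,\dots$ across $I$. If no step of the chain exceeds the threshold $\lambda^{1/p}t^{1+\gamma/p}$, the total increment is at most $(L/t)\lambda^{1/p}t^{1+\gamma/p}$. This is smaller than $D$ as soon as $\lambda t^{\gamma}<(D/L)^p$. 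So, for all scales $t\le t_*$ with $\lambda t_*^\gamma\sim(D/L)^p$, a definite fraction of the pairs $(s,s+t)$ must be ``active''. To obtain the sharp constant we compare consecutive dyadic scales $t,2t$: an inactive pair at scale $2t$ forces at least one active pair at scale $t$ among its two halves, or else the increment budget is respected. Summing the resulting recursive inequality over $t=2^{-k}t_*$ with the weight $\lambda t^{\gamma-1}\,dt$ gives the geometric factor $\frac{\log2}{2^{\gamma+1}-1}$. The remaining $L/t$ factors are matched to $L\cdot(D/L)^p$ through the relation $\lambda t_*^\gamma\sim(D/L)^p$.

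\emph{Localization and the main obstacle.} Apply the interval estimate to the limits $v_j\to v$ on a fine partition of the slice into disjoint intervals $I_i$. The energies on disjoint squares $I_i\times I_i$ add up, and $L^1$ convergence lets us replace $D_i$ by the increment of $v$ across $I_i$, computed at Lebesgue points chosen as endpoints. Since $p\ge1$, we have $D_i^p/L_i^{p-1}=L_i|D_i/L_i|^p$, and refining the partition yields $c_\gamma\int_I|v'|^p$ by Jensen's inequality and differentiation. For $p=1$ it yields $c_\gamma|Dv|(I)$, the total variation being the supremum over partitions. I expect the main obstacle to be the dyadic counting inequality in the middle step, and in particular obtaining exactly the factor $\frac{\log2}{2^{\gamma+1}-1}$ rather than merely some positive constant. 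The delicate point is making the ``active pair'' argument hold for a positive-measure set of base points uniformly, despite $v_j$ being only measurable.
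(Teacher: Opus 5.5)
\textbf{Verdict.} Your proposal has a genuine gap: the key one-dimensional interval estimate is not just unproved but rests on a claim that is false.

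\textbf{What the paper does.} The paper does not reprove this lemma. Its entire justification is a reduction to Gobbino--Picenni's Theorem~1.1, which is stated for $F_{\lambda,p,\gamma}$ in \eqref{eq:lv} (strict inequality, prefactor $\lambda^p$). Since $\{|v(x)-v(y)|^p\ge\lambda|x-y|^{p+\gamma}\}\supset\{|v(x)-v(y)|>\lambda^{1/p}|x-y|^{1+\gamma/p}\}$ and $(\lambda^{1/p})^p=\lambda$, one has $G_{\lambda,p,\gamma}(v;U)\ge F_{\lambda^{1/p},p,\gamma}(v;U)$. Together with $\lambda_j^{1/p}\to\infty$, that is all that is needed. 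You appeal to the citation but never make this translation. Instead you sketch a proof from scratch, and there the central step fails as designed.

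\textbf{Why the interval estimate fails.} Your chain argument only shows that every chain of step $t\le t_*$ contains at least one active step.
\begin{itemize}
\item A chain has about $L/t$ steps, so this gives an active fraction $\gtrsim t/L$ of the pairs $(s,s+t)$, not a definite fraction. The energy it certifies at scales $t\le t_*$ is of order $\lambda\int_0^{t_*}t\cdot t^{\gamma-1}\,dt\sim (D/L)^p\,t_*\to0$.
\item No argument confined to $t\le t_*$ can do better. For $p=1$ and a single jump of height $D$, only pairs straddling the jump are active. Its one-sided energy $\frac{D}{\gamma+1}$ lives at scales up to $(D/\lambda)^{1/(\gamma+1)}\gg t_*$.
\item The same happens for fine staircases approximating a ramp. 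These are natural competitors $v_j$ in a $\Gamma$-liminf; for $p=1$ they are strictly cheaper than the ramp itself.
\item For $p>1$, a ramp of slope $D/\ell$ with $\ell\ll L$ produces at scales $\le t_*$ only an amount of order $\frac{\ell}{L}\cdot\frac{D^p}{L^{p-1}}$.
\end{itemize}
So your ``remaining $L/t$ factors'' encode the affine picture only. Your dyadic comparison sentence is also backwards: it is an \emph{active} pair at distance $2t$ that forces an active half at distance $t$. This holds because the threshold on $|v(x)-v(y)|$ grows by the factor $2^{1+\gamma/p}\ge2$ when the distance doubles.

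\textbf{What is missing.} You need a multi-scale counting estimate valid for every profile. One way to set it up:
\begin{itemize}
\item Fix a base step $h$ small compared with $t_*$, so that inactive steps at distance $h$ carry negligible total increment, and fix an offset $\sigma$.
\item Let $A_m$ be the number of active pairs $(\sigma+kh,\sigma+kh+2^mh)$, $k\in\mathbb Z$.
\item Bound $\sum_{m\ge0}\lambda(2^mh)^{\gamma}h\,A_m$ from below in terms of the increment, using the halving property above.
\end{itemize}
For $p=1$ and a jump, the worst position of $D$ relative to the thresholds $\lambda(2^mh)^{1+\gamma}$ gives, as $h\to0$, exactly $D\sum_{k\ge1}2^{-k(\gamma+1)}=\frac{D}{2^{\gamma+1}-1}$. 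Averaging $h$ over $[h_0,2h_0]$ against $dh/h$ then supplies the $\log 2$.

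\textbf{What is in place.} Your slicing and localization frame is consistent with the slicing and dyadic strategy the paper attributes to Gobbino--Picenni. Without the counting estimate, however, only those routine parts are established.

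\textbf{Constant bookkeeping.} With $y=x+t\omega$, $t>0$, slicing yields the one-sided energy over $s'>s$. For the symmetric one-dimensional energy the required constant is therefore $2c_\gamma$, consistent with $K_{1,p}=2$.
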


Indeed, this formulation follows from \cite[Theorem~1.1]{gp25} by applying
that theorem with the parameter $\lambda_j^{1/p}$. The non-strict inequality
in \eqref{eq:Ggamma} only enlarges the corresponding superlevel set.

By Lemma \ref{lem:gp}, we have the following conclusion.
\begin{lemma}\label{lem:cellbds}
Let $p\in[1,\infty)$ and $\gamma\in(0,\infty)$. Then the following statements hold.
\begin{itemize}
\item[$\mathrm{(i)}$]
$\lim_{\lambda\to\infty}G_{\lambda,p,\gamma}(\ell;Q_0)=K_{N,p}/\gamma$,
where $K_{N,p}$ is as in \eqref{eq:gpgeo}.
Consequently, $C_{N,p,\gamma}^{\mathrm{cell}}\leq K_{N,p}/{\gamma}.$

\item [$\mathrm{(ii)}$] There exist sequences $\{\lambda_j\}_{j\in\mathbb{N} }$
and $\{v_j\}_{j\in\mathbb{N}}$ such that $\lim_{j\to\infty}\lambda_j=\infty$, $\lim_{j\to\infty}v_j=\ell$ in $L^p(Q_0)$, and
\begin{align}\label{eq:nearcell}
\lim_{j\to\infty}G_{\lambda_j,p,\gamma}(v_j;Q_0)=C_{N,p,\gamma}^{\mathrm{cell}}.
\end{align}
Moreover,
\begin{align}\label{eq:cellbds}
K_{N,p}\frac{\log2}{2^{\gamma+1}-1}\leq C_{N,p,\gamma}^{\mathrm{cell}}.
\end{align}
\end{itemize}
\end{lemma}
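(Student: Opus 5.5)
For (i), I will compute $G_{\lambda,p,\gamma}(\ell;Q_0)$ directly. Since $|\ell(x)-\ell(y)|=|x_N-y_N|$, the superlevel condition reads $|x_N-y_N|^p\ge\lambda|x-y|^{p+\gamma}$. Writing $y=x+h$ and $h=\rho\omega$ with $\omega\in\mathbb S^{N-1}$, it becomes $\rho^\gamma\le|\omega\cdot e_N|^p/\lambda$, so $\rho\le\rho_\lambda(\omega):=(|\omega_N|^p/\lambda)^{1/\gamma}$. Hence
\begin{align*}
G_{\lambda,p,\gamma}(\ell;Q_0)=\lambda\int_{\mathbb S^{N-1}}\int_0^{\rho_\lambda(\omega)}\rho^{\gamma-1}\,\mathcal L^N\left(\{x\in Q_0:\ x+\rho\omega\in Q_0\}\right)d\rho\,d\mathcal H^{N-1}(\omega).
\end{align*}
We have $\rho_\lambda\le\lambda^{-1/\gamma}\to0$ uniformly, and the measure of the set in $x$ lies in $[1-\sqrt N\rho,1]$, so it tends to $1$ uniformly. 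Also $\lambda\int_0^{\rho_\lambda}\rho^{\gamma-1}d\rho=|\omega_N|^p/\gamma$. Dominated (in fact uniform) convergence then gives the limit $K_{N,p}/\gamma$. Taking $u_j=\ell$ and any $\lambda_j\to\infty$ as an admissible pair in \eqref{eq:cellconst} yields $C^{\mathrm{cell}}_{N,p,\gamma}\le K_{N,p}/\gamma$.

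For (ii), the existence of a realizing sequence is a diagonal argument like the one in the proof of Lemma \ref{lem:mplsc}. By (i) the infimum $C:=C^{\mathrm{cell}}_{N,p,\gamma}$ is finite. For each $n$, pick an admissible pair $(\lambda_{n,k},u_{n,k})_k$ with $\liminf_k G_{\lambda_{n,k},p,\gamma}(u_{n,k};Q_0)\le C+\frac1n$. Then choose $k(n)$ such that $\lambda_{n,k(n)}\ge n$, $\|u_{n,k(n)}-\ell\|_{L^p(Q_0)}\le\frac1n$, and $G\le C+\frac2n$. Setting $\lambda_n:=\lambda_{n,k(n)}$ and $v_n:=u_{n,k(n)}$ gives an admissible sequence with $\limsup_n G\le C$. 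By the definition of $C$ as an infimum, $\liminf_n G\ge C$ along this sequence and along every subsequence. Therefore the limit exists and equals $C$, which is \eqref{eq:nearcell}.

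For \eqref{eq:cellbds}, I apply Lemma \ref{lem:gp} with $U=Q_0$, $u=\ell$, and any admissible sequence, since $L^p$ convergence implies $L^1_{\mathrm{loc}}$ convergence. When $p>1$, $\ell\in\dot W^{1,p}(Q_0)$ and $\int_{Q_0}|\nabla\ell|^p=1$. When $p=1$, $|D\ell|(Q_0)=1$. So every admissible liminf is at least $K_{N,p}\log2/(2^{\gamma+1}-1)$, and taking the infimum gives the bound.

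There is no serious obstacle. The only care needed is the uniform control of the overlap measure $\mathcal L^N(Q_0\cap(Q_0-\rho\omega))$ for small $\rho$ in (i), and the subsequence argument showing that the limit, not just the liminf, equals $C$ in (ii).
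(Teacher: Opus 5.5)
Your proposal is correct and follows essentially the paper's route. For (i) the paper also reduces, via Fubini, to the overlap $|Q_0\cap(Q_0-h)|$; it rescales $h=\lambda^{-1/\gamma}z$ and applies dominated convergence before passing to polar coordinates, whereas your two-sided bound $1-\sqrt N\rho\le|Q_0\cap(Q_0-\rho\omega)|\le 1$ even yields a quantitative rate. For (ii) the paper invokes the same diagonal argument together with Lemma~\ref{lem:gp}, and your write-up simply supplies the details it calls standard, including the check that $\int_{Q_0}|\nabla\ell|^p\,dx=|D\ell|(Q_0)=1$.
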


\begin{proof}
By Fubini's theorem, we find that, for any $\lambda\in(0,\infty),$
\begin{align*}
G_{\lambda,p,\gamma}(\ell;Q_0)&=\lambda\int_{\mathbb R^N}|Q_0\cap(Q_0-h)|
\mathbf 1_{\left\{h\in\mathbb R^N:\ h\neq\mathbf 0,\ \frac{|h_N|^p}{|h|^{p+\gamma}}\geq\lambda\right\}}
|h|^{\gamma-N}\,dh\\
&=\int_{\mathbb R^N}\left|Q_0\cap\left(Q_0-\lambda^{-1/\gamma}z\right)\right|
\mathbf 1_{\left\{z\in\mathbb R^N:\ z\neq\mathbf 0,\ \frac{|z_N|^p}{|z|^{p+\gamma}}\geq1\right\}}
|z|^{\gamma-N}\,dz.
\end{align*}
This, combined with the dominated convergence theorem, implies that
\begin{align*}
\lim_{\lambda\to\infty}G_{\lambda,p,\gamma}(\ell;Q_0)
&=\int_{\mathbb R^N}\mathbf 1_{\left\{z\in\mathbb R^N:\ z\neq\mathbf 0,\ \frac{|z_N|^p}{|z|^{p+\gamma}}\geq1\right\}}
|z|^{\gamma-N}\,dz
=\int_{\mathbb S^{N-1}}\int_0^{|\omega\cdot e_N|^{p/\gamma}}r^{\gamma-1}\,dr\,
d\mathcal H^{N-1}(\omega)\\
&=\frac1\gamma\int_{\mathbb S^{N-1}}|\omega\cdot e_N|^p\,d\mathcal H^{N-1}(\omega)
=\frac{K_{N,p}}{\gamma}.
\end{align*}
This shows (i).

Moreover, \eqref{eq:nearcell} follows directly from the standard
diagonal argument and the definition of $C_{N,p,\gamma}^{\mathrm{cell}}$, and  \eqref{eq:cellbds} follows directly
from  \eqref{eq:nearcell} and Lemma \ref{lem:gp}. This completes the proof of
(ii) and hence Lemma \ref{lem:cellbds}.
\end{proof}

\section{The Gamma-Limsup Inequality\label{sec:plimsup}}

In this section, we prove the Gamma-limsup inequality stated  in Theorem \ref{thm:main}.
More precisely, we have the  following  conclusions.

\begin{theorem}\label{thm:plimsup}
Let $N\in\mathbb N$, $p\in [1,\infty)$, $\gamma\in(0,\infty)$, and $\Omega\subset\mathbb R^N$ be a bounded open interval
when $N=1$ or a bounded Lipschitz domain when $N\ge2$. Let $u\in W^{1,p}(\Omega)$ and
$\{\lambda_j\}_{j\in\mathbb N}\subset(0,\infty)$ satisfy $\lim_{j\to\infty}\lambda_j=\infty$.
Then there exists $\{u_j\}_{j\in\mathbb N}\subset L^p(\Omega)$ satisfying $u_j\to u$ in
$L^p(\Omega)$ and
\begin{align}\label{eq:jiahua}
\limsup_{j\to\infty}G_{\lambda_j,p,\gamma}(u_j;\Omega)
\leq C_{N,p,\gamma}^{\mathrm{cell}}\int_\Omega|\nabla u|^p\,dx,
\end{align}
where $C_{N,p,\gamma}^{\mathrm{cell}}$ is as in \eqref{eq:cellconst}.
\end{theorem}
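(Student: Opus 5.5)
The plan follows the three-step structure announced in the introduction.

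\emph{Step 1: boundary modification of cell sequences.} Start from the sequences $\{(\lambda_j,v_j)\}$ of Lemma~\ref{lem:cellbds}(ii), which satisfy $v_j\to\ell$ in $L^p(Q_0)$ and $G_{\lambda_j,p,\gamma}(v_j;Q_0)\to C_{N,p,\gamma}^{\mathrm{cell}}$. Fix $\delta\in(0,1/4)$ and a cutoff $\varphi\in C_c^\infty(Q_0)$ with $\varphi\equiv1$ on $(1-\delta)Q_0$. Set $\tilde v_j:=\varphi v_j+(1-\varphi)\ell$, after first truncating $v_j$ so that $|v_j-\ell|\le 1$; truncation does not increase $G$. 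The aim is
\[
\limsup_j G_{\lambda_j,p,\gamma}(\tilde v_j;Q_0)\le C_{N,p,\gamma}^{\mathrm{cell}}+o_\delta(1).
\]
Pairs with both points in $(1-\delta)Q_0$ are controlled by the energy of $v_j$. Pairs with both points in the layer where $\varphi<1$ are controlled by an $\ell$-like contribution of order $K_{N,p}\delta/\gamma$. The mixed pairs are the delicate part.

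Following Nguyen, I would average over a family of layers: choose the transition annulus among $\sim1/\delta$ disjoint annuli so that the energy of $v_j$ near it is at most $\delta$ times the total. Using $|\tilde v_j(x)-\tilde v_j(y)|\le |v_j(x)-v_j(y)|+|\nabla\varphi||x-y||v_j-\ell|(y)+\dots$, the level set of $\tilde v_j$ is contained in the union of a slightly lowered level set of $v_j$ and a set controlled by $\|v_j-\ell\|_{L^p}\to0$ through a Chebyshev estimate against the weight $|x-y|^{\gamma-N}$. Only pairs with $|x-y|\lesssim \lambda^{-1/\gamma}$ matter, since $|x-y|^{p+\gamma}\lambda\le|v(x)-v(y)|^p\le 2^p$. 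A small loss in $\lambda$ is absorbed by scaling, since $G_{\lambda(1-\epsilon)}\le (1-\epsilon)^{-1}G$ evaluated at a comparable level. Combined with Lemma~\ref{lem:scaling}, this gives, for every unit vector $e$ and slope $a$, a sequence on any cube that equals the affine function $a\,x\cdot e$ near the cube boundary and has energy at most $(C^{\mathrm{cell}}+\varepsilon)|a|^p|Q|$.

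\emph{Step 2: piecewise affine $u$.} Extend $u$ by the Lipschitz extension theorem and approximate it on a neighbourhood of $\Omega$ by continuous piecewise affine functions on simplices. On each simplex, tile by small cubes of side $\rho$ and insert the modified cell sequences from Step 1, with $\lambda$ rescaled as in Lemma~\ref{lem:scaling}. Cubes that meet simplex boundaries or $\partial\Omega$ keep the affine or piecewise affine function. Their total measure is $O(\rho)$, so by Lemma~\ref{lem:cellbds}(i) their energy contribution is $O(\rho)\|\nabla u\|_\infty^p$.

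Interactions between different cubes involve only pairs at distance $\lesssim\lambda^{-1/\gamma}\ll\rho$. Because each inserted function equals the affine function in a $\delta\rho$-collar of its cube, such pairs see an affine function on at least one side. I expect the cross-cube interaction to be the main obstacle, since it must be bounded by the collar energy of order $\delta$ plus $o(1)$ in $\lambda$. Letting $j\to\infty$, then $\delta,\rho\to0$, gives \eqref{eq:jiahua} for piecewise affine $u$.

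\emph{Step 3: general $u\in W^{1,p}(\Omega)$.} For Lipschitz $\Omega$, continuous piecewise affine functions are dense in $W^{1,p}(\Omega)$: extend $u$, mollify, and interpolate. Hence the $\Gamma$-limsup, which is lower semicontinuous in $L^p$, is bounded above by $C^{\mathrm{cell}}\int_\Omega|\nabla u|^p\,dx$ through the continuity of the right-hand side in $W^{1,p}$. A diagonal argument then produces the recovery sequence along the given $\{\lambda_j\}$.

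To pass from the sequence $\lambda_j$ of Lemma~\ref{lem:cellbds} to an arbitrary sequence, I would use the fact that the rescaling in Lemma~\ref{lem:scaling} lets the cube side absorb the value of $\lambda$: choose $r$ so that $r^\gamma\lambda/|a|^p$ matches a cell parameter. For $N=1$, intervals replace cubes and the argument is unchanged.
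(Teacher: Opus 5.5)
Your plan follows the paper's route: boundary-modified cell sequences (Proposition~\ref{prop:prepair}), scaled and rotated copies glued into a continuous piecewise affine function (Proposition~\ref{prop:pa}), then density and a diagonal argument.

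\textbf{Matching cube sides to cell parameters.} Your last paragraph names the decisive device. The paper implements it by taking the cube side in the $i$th piece to be $h_{j,i}=(|\xi_i|^p\Lambda_{k_j}/\lambda_j)^{1/\gamma}$ with $\Lambda_{k_j}/\lambda_j\to0$. So Step~2 should use these slope- and $j$-dependent sides, which tend to $0$, not a fixed $\rho$. The reason is that the definition of $C_{N,p,\gamma}^{\mathrm{cell}}$ supplies near-optimal cell functions only along one sequence of parameters. Pieces with $\xi_i=0$ break this matching; the paper removes them by replacing $w$ with $w+\varepsilon_mq\cdot x$.

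\textbf{Step~1 is simpler than you expect.} Averaging over layers is not needed, because $\ell$ is Lipschitz. The convexity bound $|\eta(x)a+(1-\eta(x))b|^p\le\eta(x)|a|^p+(1-\eta(x))|b|^p$, together with your Chebyshev estimate for $[\eta(x)-\eta(y)][v(y)-\ell(y)]$, already gives (Lemma~\ref{lem:pcutoff})
\[
G_{\lambda,p,\gamma}(\eta v+(1-\eta)\ell;Q_0)\le\alpha^{-p}G_{\alpha^p\lambda,p,\gamma}(v;Q_0)+C|\{\eta<1\}|+CL_\eta^p(1-\alpha)^{-p}\|v-\ell\|_{L^p(Q_0)}^p.
\]
Nguyen-type averaging appears only in the liminf part (Lemma~\ref{lem:replace}), where the boundary datum is not Lipschitz.

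\textbf{The cross-cube step.} This is the step you leave open. It is not a real obstacle, but your justification does not work as stated. For a merely bounded function the interaction range is of order $\lambda^{-1/(p+\gamma)}$, not $\lambda^{-1/\gamma}$; the latter is the Lipschitz range. Also, knowing that one endpoint sees the affine function does not control the pair.

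What works, and what Proposition~\ref{prop:prepair}(iii) is built for, is a separation estimate that uses the $L^\infty$ deviation. Let $\tilde v=\ell$ within distance $\delta$ of $\partial Q_0$ and $\|\tilde v-\ell\|_{L^\infty(Q_0)}\le1$, which your truncation provides. Take $x\in Q$ with $\operatorname{dist}(x,\partial Q)\ge\delta r$ and $y\notin Q$. Then $|x-y|\ge\delta r$ and $|w_j(x)-w(x)|\le|\xi|r$. At $y$ either $w_j(y)=w(y)$, or $y$ lies in the inner part of another cube $Q'$ of side $r'$, in which case $|x-y|\ge\delta r'$. Write $L$ for the Lipschitz constant of $w$ and $a_*:=\min_i|\xi_i|$, and use $r^\gamma\lambda_j=|\xi|^p\Lambda$. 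Then
\[
\frac{|w_j(x)-w_j(y)|^p}{\lambda_j|x-y|^{p+\gamma}}\lesssim\frac{L^p}{a_*^p\Lambda\delta^{\gamma}}+\frac{1}{\Lambda\delta^{p+\gamma}}\longrightarrow0 .
\]
So for large $j$ no such pair lies in the superlevel set. Every remaining pair not contained in a single cube has both endpoints where $w_j=w$. These pairs contribute at most $\omega_NL^p|E_j|/\gamma=O(\delta)+o(1)$; this is the inclusion \eqref{eq:painc}.

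With this filled in, your scheme works: fixed collar width $\delta$, then a diagonal argument in $\delta$. The paper instead lets the collar width $\rho_j\to0$ inside Proposition~\ref{prop:prepair}, so no final limit in $\delta$ is needed.
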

\begin{theorem}\label{thm:onelimsup}
Let $\gamma\in(0,\infty)$ and $\Omega\subset \mathbb{R}^N $ be as in Theorem \ref{thm:plimsup},
$u\in BV(\Omega)$, $\{\lambda_j\}_{j\in\mathbb N}\subset(0,\infty)$ satisfy
$\lim_{j\to\infty}\lambda_j=\infty$.  Then there exists
$\{u_j\}_{j\in\mathbb N}\subset L^1(\Omega)$ satisfying $u_j\to u$ in $L^1(\Omega)$ and
\begin{align*}
\limsup_{j\to\infty}G_{\lambda_j,1,\gamma}(u_j;\Omega)\leq C_{N,1,\gamma}^{\mathrm{cell}}|Du|(\Omega),
\end{align*}
where $C_{N,1,\gamma}^{\mathrm{cell}}$ is as in \eqref{eq:cellconst}.
\end{theorem}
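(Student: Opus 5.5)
The statement to prove is Theorem \ref{thm:onelimsup}, the $BV$ $\Gamma$-limsup. I would derive it from Theorem \ref{thm:plimsup} with $p=1$ by a density and diagonal argument. The tool is the classical strict approximation of $BV$ functions on Lipschitz domains (Anzellotti--Giaquinta). Given $u\in BV(\Omega)$, there exist $w_k\in C^\infty(\Omega)\cap W^{1,1}(\Omega)$ with
\begin{align*}
w_k\to u\ \text{in}\ L^1(\Omega)\quad\text{and}\quad \int_\Omega|\nabla w_k|\,dx\to|Du|(\Omega).
\end{align*}
Applying Theorem \ref{thm:plimsup} with $p=1$ to each $w_k$ and the given sequence $\{\lambda_j\}_{j\in\mathbb N}$ yields sequences $\{u_{k,j}\}_{j\in\mathbb N}\subset L^1(\Omega)$ such that $u_{k,j}\to w_k$ in $L^1(\Omega)$ as $j\to\infty$ and
\begin{align*}
\limsup_{j\to\infty}G_{\lambda_j,1,\gamma}(u_{k,j};\Omega)\le C_{N,1,\gamma}^{\mathrm{cell}}\int_\Omega|\nabla w_k|\,dx.
\end{align*}

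The diagonal step goes as follows. For each $k$, choose $J_k$ increasing in $k$ so that, for all $j\ge J_k$, we have $\|u_{k,j}-w_k\|_{L^1(\Omega)}\le 1/k$ and
\begin{align*}
G_{\lambda_j,1,\gamma}(u_{k,j};\Omega)\le C_{N,1,\gamma}^{\mathrm{cell}}\int_\Omega|\nabla w_k|\,dx+\frac1k.
\end{align*}
Define $u_j:=u_{k,j}$ for $J_k\le j<J_{k+1}$; for $j<J_1$ take, say, $u_j:=u_{1,j}$, which does not affect limits. Then $u_j\to u$ in $L^1(\Omega)$, since $\|u_j-u\|_{L^1(\Omega)}\le 1/k+\|w_k-u\|_{L^1(\Omega)}$ with $k=k(j)\to\infty$. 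Moreover,
\begin{align*}
\limsup_{j\to\infty}G_{\lambda_j,1,\gamma}(u_j;\Omega)\le\limsup_{k\to\infty}\left(C_{N,1,\gamma}^{\mathrm{cell}}\int_\Omega|\nabla w_k|\,dx+\frac1k\right)=C_{N,1,\gamma}^{\mathrm{cell}}|Du|(\Omega).
\end{align*}
Equivalently, one can phrase this as: the $\Gamma$-limsup functional along $\{\lambda_j\}_{j\in\mathbb N}$ is $L^1$-lower semicontinuous, and its value at $w_k$ is at most $C_{N,1,\gamma}^{\mathrm{cell}}\int_\Omega|\nabla w_k|\,dx$.

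The argument has a single obstacle: the $\limsup$ in Theorem \ref{thm:plimsup} is only asymptotic. The choice of $J_k$ therefore has to use, for each fixed $k$, that $\sup_{j\ge J}G_{\lambda_j,1,\gamma}(u_{k,j};\Omega)$ eventually lies below the limsup plus $1/k$. This is immediate from the definition of $\limsup$, and the rest is routine. If one preferred to avoid relying on the full generality of Theorem \ref{thm:plimsup}, one could instead approximate $u$ strictly by continuous piecewise affine functions. However, the strict smooth approximation combined with Theorem \ref{thm:plimsup} is the most direct route.
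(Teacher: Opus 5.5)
Your proposal is correct and follows essentially the same route as the paper. The paper's proof combines strict approximation of $u$ by $W^{1,1}(\Omega)\cap C^\infty(\Omega)$ functions (Lemma \ref{lem:strictapp}), Theorem \ref{thm:plimsup} with $p=1$ applied to each approximant, and the same diagonal selection used in the proof of Theorem \ref{thm:plimsup}. One minor point: strict approximation holds on any open set, so the Lipschitz hypothesis on $\Omega$ is only needed inside Theorem \ref{thm:plimsup}.
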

The proofs of Theorems \ref{thm:plimsup} and \ref{thm:onelimsup} are given in Subsection \ref{proofsup}.

\subsection{Boundary Modification of Sequences Realizing the Affine Cell Constant}
\label{subsec:bdrymod}
In this  subsection, we use Lemma \ref{lem:cellbds} to show the following  conclusion, which
plays a pivotal role in the proof of Theorem \ref{thm:plimsup}.
\begin{proposition}\label{prop:prepair}
Let $p\in[1,\infty)$ and $\gamma\in(0,\infty)$. Then  there exist sequences $\{\Lambda_j\}_{j\in\mathbb N}\subset(0,\infty)$,
$\{V_j\}_{j\in\mathbb N}\subset L^\infty(Q_0)$, and $\{\rho_j\}_{j\in\mathbb N}\subset(0,\infty)$
satisfying the following properties:
\begin{enumerate}[{\rm(i)}]
\item $\lim_{j\to\infty}\Lambda_j=\infty$, $\lim_{j\to\infty}V_j=\ell$ in $L^p(Q_0)$,
$\lim_{j\to\infty}\rho_j=0$, and
\begin{align}\label{eq:pah1}
\lim_{j\to\infty}G_{\Lambda_j,p,\gamma}(V_j;Q_0)=C_{N,p,\gamma}^{\mathrm{cell}}.
\end{align}
\item  for any $j\in\mathbb{N}$,
\begin{align}\label{eq:pah2}
V_j=\ell\ \ \text{on}\ \
\{z\in Q_0:\operatorname{dist}(z,\partial Q_0)<\rho_j\}.
\end{align}
\item it holds that
\begin{align}\label{eq:pah3}
\lim_{j\to\infty}\Lambda_j\rho_j^\gamma=\infty\ \ \text{and}\ \
\lim_{j\to\infty}\frac{\|V_j-\ell\|_{L^\infty(Q_0)}^p}
{\Lambda_j\rho_j^{p+\gamma}}=0.
\end{align}
\end{enumerate}
\end{proposition}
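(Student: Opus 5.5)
The plan is to take the near-optimal sequence $\{(\lambda_j,v_j)\}_{j\in\mathbb N}$ from Lemma \ref{lem:cellbds}(ii) and change it by a single pointwise truncation around $\ell$. The truncation width will shrink to zero near $\partial Q_0$. The tool is the elementary fact that the median of three reals, $\mathrm{med}(a,b,c)$, is $1$-Lipschitz with respect to the max-norm of $(a,b,c)$.

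\textbf{Construction.} Set $\varepsilon_j:=\|v_j-\ell\|_{L^p(Q_0)}\to0$ and choose
$$\rho_j:=\max\Bigl\{\varepsilon_j^{1/2},\ \lambda_j^{-1/(2\gamma)},\ j^{-1}\Bigr\},$$
so that $\rho_j\to0$, $\varepsilon_j^p/\rho_j^p\to0$, and $\lambda_j\rho_j^\gamma\ge\lambda_j^{1/2}\to\infty$. Let
$$\psi_j(z):=\min\bigl\{1,\ (\operatorname{dist}(z,\partial Q_0)/\rho_j-1)_+\bigr\}.$$
This function is $\rho_j^{-1}$-Lipschitz, vanishes where $\operatorname{dist}(z,\partial Q_0)<\rho_j$, and equals $1$ outside a boundary layer of width $2\rho_j$. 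Define $\Lambda_j:=\lambda_j$ and
$$V_j:=\mathrm{med}\bigl(v_j,\ \ell-\rho_j\psi_j,\ \ell+\rho_j\psi_j\bigr).$$
Then $|V_j-\ell|\le\rho_j\psi_j\le\rho_j$, so $V_j\in L^\infty(Q_0)$, $V_j\to\ell$ in $L^p(Q_0)$, and \eqref{eq:pah2} holds. For \eqref{eq:pah3}, the first limit holds by the choice of $\rho_j$, and the ratio is at most $\rho_j^p/(\Lambda_j\rho_j^{p+\gamma})=(\Lambda_j\rho_j^\gamma)^{-1}\to0$.

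\textbf{Energy estimate (the main step).} Let $A_j:=\{z\in Q_0:\ V_j(z)\neq v_j(z)\}$. Then
$$A_j\subset\{|v_j-\ell|>\rho_j\}\cup\{\operatorname{dist}(\cdot,\partial Q_0)<2\rho_j\},$$
so Chebyshev gives $|A_j|\lesssim\varepsilon_j^p/\rho_j^p+\rho_j\to0$. If $x,y\notin A_j$, then $V_j(x)-V_j(y)=v_j(x)-v_j(y)$. Otherwise, by the median's Lipschitz property,
$$|V_j(x)-V_j(y)|\le\max\bigl\{|v_j(x)-v_j(y)|,\ |x-y|+|x-y|\bigr\}=\max\{|v_j(x)-v_j(y)|,\,2|x-y|\},$$
since $\ell\pm\rho_j\psi_j$ is $2$-Lipschitz. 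Hence the superlevel set of $V_j$ is contained in the superlevel set of $v_j$ together with the pairs having $x\in A_j$ or $y\in A_j$ and $2^p|x-y|^p\ge\lambda_j|x-y|^{p+\gamma}$, that is, $|x-y|\le(2^p/\lambda_j)^{1/\gamma}$. The contribution of the latter pairs is at most
$$2\lambda_j|A_j|\int_{|h|\le(2^p/\lambda_j)^{1/\gamma}}|h|^{\gamma-N}\,dh=2\,\mathcal H^{N-1}(\mathbb S^{N-1})\frac{2^p}{\gamma}|A_j|\to0.$$
Therefore $\limsup_{j\to\infty}G_{\Lambda_j,p,\gamma}(V_j;Q_0)\le\lim_{j\to\infty}G_{\lambda_j,p,\gamma}(v_j;Q_0)=C_{N,p,\gamma}^{\mathrm{cell}}$. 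The reverse inequality for the $\liminf$ is exactly the definition \eqref{eq:cellconst}, since $V_j\to\ell$ and $\Lambda_j\to\infty$; this yields \eqref{eq:pah1}.

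The delicate point is this energy comparison. A naive cutoff $\ell+\varphi(v_j-\ell)$ would generate uncontrolled nonlocal cross terms. The median construction avoids them because modified pairs cost at most the affine-type energy, localized to $|h|\lesssim\lambda_j^{-1/\gamma}$, and weighted by the vanishing measure $|A_j|$. One must keep the slope $\rho_j^{-1}\cdot\rho_j$ of the cutoff envelope bounded; this is why the truncation width and the layer width are both taken equal to $\rho_j$.
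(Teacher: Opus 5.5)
Your proof is correct, and it takes a genuinely different route from the paper's.

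\textbf{The paper's route.} The paper uses the convex-combination joining $V_{j,n}=\eta_jT_j(v_n)+(1-\eta_j)\ell$ together with Lemma \ref{lem:pcutoff}(ii).
\begin{itemize}
\item The threshold is split into fractions $\alpha_j$ and $1-\alpha_j$, which forces the rescaling $\lambda_{j,n}=\widetilde\lambda_n/\alpha_j^p$.
\item The cross term $[\eta_j(x)-\eta_j(y)][T_j(v_n)(y)-\ell(y)]$ is paid for by $\omega_N\|T_j(v_n)-\ell\|_{L^p(Q_0)}^p/(\gamma(1-\alpha_j)^p\rho_j^p)$.
\item A separate truncation $T_j$ at height $j$ supplies the $L^\infty$ control needed for \eqref{eq:pah3}.
\item Because $\rho_j$ and $\alpha_j$ are fixed first and only then is $n\to\infty$ taken, a diagonal choice $n(j)$ is required.
\end{itemize}

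\textbf{Your route.} You clamp $v_j$ between the $2$-Lipschitz obstacles $\ell\pm\rho_j\psi_j$, which exploits the order structure of scalar values instead. Since the clamp is $1$-Lipschitz in the max-norm, a pair touching the modified set $A_j$ can enter the superlevel set only through $|v_j(x)-v_j(y)|$ or at range $|x-y|\le(2^p/\lambda_j)^{1/\gamma}$. This gives the one-line comparison $G_{\lambda_j,p,\gamma}(V_j;Q_0)\le G_{\lambda_j,p,\gamma}(v_j;Q_0)+C_{N,p,\gamma}|A_j|$ at the \emph{same} threshold, and Chebyshev then controls $|A_j|$.

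\textbf{What each approach buys.} Your construction gives $\Lambda_j=\lambda_j$ exactly, with no threshold splitting, no diagonalization and no auxiliary height truncation. It also yields the stronger bound $\|V_j-\ell\|_{L^\infty(Q_0)}\le\rho_j$, which makes \eqref{eq:pah3} immediate. Both arguments rely on the same scale separation $\rho_j\gg\|v_j-\ell\|_{L^p(Q_0)}$: in the paper it enters through the third term of \eqref{eq:pcutoff}, in yours through Chebyshev. The paper's joining lemma is the more general tool, since it does not use ordering and applies to arbitrary pairs $u,v$. For this particular statement, however, your construction is shorter and complete.
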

To prove Proposition \ref{prop:prepair}, we need the following basic results.
Let $p\in[1,\infty)$ and $\gamma\in(0,\infty)$. For any given  measurable sets $E\subset D\subset\mathbb R^N$ and any
measurable function $u:D\to\mathbb R$, let
\begin{align*}
\mathcal G_{\lambda,p,\gamma}(u;E,D):=\lambda\int_E\int_D
\mathbf 1_{\left\{(x,y)\in E\times D:\ x\neq y,\ \frac{|u(x)-u(y)|^p}{|x-y|^{p+\gamma}}\geq\lambda\right\}}
|x-y|^{\gamma-N}\,dy\,dx.
\end{align*}

\begin{lemma}\label{lem:pcutoff}
Let $N\in\mathbb N$, $p\in [1,\infty)$, $\gamma\in(0,\infty)$, and $\lambda\in (0,\infty).$
\begin{itemize}
\item [$\mathrm{(i)}$] For any given  measurable sets $E\subset D\subset\mathbb R^N$,
\begin{align*}
\mathcal G_{\lambda,p,\gamma}(\ell;E,D)\leq\frac{\omega_N}{\gamma}|E|,
\end{align*}
where $\omega_N:=\mathcal H^{N-1}(\partial B(\mathbf{0},1)).$ Consequently, $G_{\lambda,p,\gamma}(\ell;E)\leq\frac{\omega_N}{\gamma}|E|$.
\item  [$\mathrm{(ii)}$]Let $U\subset\mathbb R^N$ be a bounded open set,
$u,v\in L^p(U)$, and $\eta:U\to[0,1]$ be Lipschitz.
Then, for any  $\alpha\in (0,1)$,
\begin{align}\label{eq:pcutoff}
G_{\lambda,p,\gamma}(w;U)\leq\frac{G_{\alpha^p\lambda,p,\gamma}(u;U)}{\alpha^p}
+\frac{\mathcal G_{\alpha^p\lambda,p,\gamma}(v;E_\eta,U)}{\alpha^p}
+\frac{\omega_NL_\eta^p\|u-v\|_{L^p(U)}^p}{\gamma(1-\alpha)^p},
\end{align}
where $w:=\eta u+(1-\eta)v,$ $E_\eta:=\{x\in U:\eta(x)<1\}$, and
\begin{align*}
L_\eta:=\sup_{\substack{x,y\in U\\x\neq y}}\frac{|\eta(x)-\eta(y)|}{|x-y|}<\infty.
\end{align*}
\end{itemize}
\end{lemma}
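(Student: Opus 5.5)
For (i), fix $x\in E$ and bound the $y$-integral pointwise in $x$. For (ii), split the difference quotient of $w$ into a convex-combination part and a cutoff-error part, then treat each piece either as one of the two nonlocal terms or by the same radial computation as in (i).

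\textbf{Part (i).} Fix $x\in E$ and let $y\in D$, $y\neq x$, satisfy $|x_N-y_N|^p\geq\lambda|x-y|^{p+\gamma}$. Since $|x_N-y_N|\le|x-y|$, this forces $|x-y|^\gamma\le\lambda^{-1}$, i.e. $|x-y|\le\lambda^{-1/\gamma}$. Hence the inner integral is at most
$$\lambda\int_{B(\mathbf 0,\lambda^{-1/\gamma})}|h|^{\gamma-N}\,dh=\lambda\,\omega_N\frac{\lambda^{-1}}{\gamma}=\frac{\omega_N}{\gamma}.$$
Integrating over $x\in E$ gives $\mathcal G_{\lambda,p,\gamma}(\ell;E,D)\le\frac{\omega_N}{\gamma}|E|$. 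The consequence is the case $D=E$.

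\textbf{Part (ii), algebraic identity.} I would use the identity, verified by expanding,
$$w(x)-w(y)=\eta(x)\,[u(x)-u(y)]+(1-\eta(x))\,[v(x)-v(y)]+(\eta(x)-\eta(y))\,[u(y)-v(y)].$$
The first two terms form a convex combination. If $x\notin E_\eta$, then $\eta(x)=1$ and only the $u$-term survives. So the absolute value of the first two terms is at most $|u(x)-u(y)|$ when $x\notin E_\eta$, and at most $\max\{|u(x)-u(y)|,|v(x)-v(y)|\}$ when $x\in E_\eta$. Write $d:=|x-y|$.

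\textbf{Part (ii), splitting the superlevel set.} Suppose $|w(x)-w(y)|\ge\lambda^{1/p}d^{1+\gamma/p}$. Then one of the following must hold:
\begin{enumerate}[(a)]
\item $|u(x)-u(y)|\ge\alpha\lambda^{1/p}d^{1+\gamma/p}$;
\item $x\in E_\eta$ and $|v(x)-v(y)|\ge\alpha\lambda^{1/p}d^{1+\gamma/p}$;
\item $L_\eta\, d\,|(u-v)(y)|\ge(1-\alpha)\lambda^{1/p}d^{1+\gamma/p}$.
\end{enumerate}
Raising (a) and (b) to the $p$-th power shows the pair $(x,y)$ lies in the superlevel set at level $\alpha^p\lambda$. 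Writing $\lambda=\alpha^p\lambda/\alpha^p$, the contribution of (a) is at most $G_{\alpha^p\lambda,p,\gamma}(u;U)/\alpha^p$. The contribution of (b) is at most $\mathcal G_{\alpha^p\lambda,p,\gamma}(v;E_\eta,U)/\alpha^p$.

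\textbf{Part (ii), the cutoff term.} Condition (c) is the one needing care; the point is to integrate in $x$ first with $y$ fixed, using Fubini. Condition (c) means $d^\gamma\le L_\eta^p|(u-v)(y)|^p/((1-\alpha)^p\lambda)=:R(y)^\gamma$. The same radial computation as in (i) then gives
$$\lambda\int_{B(y,R(y))}|x-y|^{\gamma-N}\,dx=\frac{\lambda\,\omega_N R(y)^\gamma}{\gamma}=\frac{\omega_N L_\eta^p|(u-v)(y)|^p}{\gamma(1-\alpha)^p}.$$
Integrating over $y\in U$ yields the last term of \eqref{eq:pcutoff}. Summing the three contributions gives \eqref{eq:pcutoff}.
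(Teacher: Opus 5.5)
Your proposal is correct and follows the paper's proof essentially step for step. Part (i) uses the same radial bound. Part (ii) uses the same splitting of $w(x)-w(y)$ into the convex-combination term and the cutoff term $(\eta(x)-\eta(y))(u-v)(y)$, with thresholds $\alpha\lambda^{1/p}$ and $(1-\alpha)\lambda^{1/p}$, and the same radial integration in $x$ for fixed $y$ to handle the cutoff term.

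The only cosmetic difference is how you treat the convex-combination term. You bound it by $\max\{|u(x)-u(y)|,|v(x)-v(y)|\}$ via the triangle inequality, whereas the paper uses convexity of $t\mapsto|t|^p$. Both lead to the same set inclusion into the $u$-superlevel set and the $v$-superlevel set restricted to $x\in E_\eta$.
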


\begin{proof}
Note that, for any $x,y\in\mathbb{R}^N,$ $|\ell(x)-\ell(y)|\leq|x-y|.$ Thus,
\begin{align*}
\mathcal G_{\lambda,p,\gamma}(\ell;E,D)\leq\lambda \int_{E}
\int_{D\cap B(x, \lambda^{-\frac{1}{\gamma}})}|x-y|^{\gamma-N}\,dy\,dx
\leq\frac{\omega_N}{\gamma}|E|.
\end{align*}
This shows (i).

Moreover, observe that, for any $x,y\in U$,
\begin{align*}
w(x)-w(y)=A(x,y)+B(x,y),
\end{align*}
where
\begin{align*}
 A(x,y):=\eta(x)[u(x)-u(y)]+[1-\eta(x)][v(x)-v(y)]
\end{align*}
and
\begin{align*}
 B(x,y):=[\eta(x)-\eta(y)][u(y)-v(y)].
\end{align*}
Set
$$E_w:=\left\{(x,y)\in U\times U:\ x\neq y,\ \frac{|w(x)-w(y)|}{|x-y|^{\frac{p+\gamma}{p}}}\geq \lambda^{\frac{1}{p}}\right\},$$
$$E_{A}:=\left\{(x,y)\in U\times U:\ x\neq y,\ \frac{|A(x,y)|}{|x-y|^{\frac{p+\gamma}{p}}}\geq \alpha \lambda^{\frac{1}{p}}\right\},$$
and
$$E_{B}:=\left\{(x,y)\in U\times U:\ x\neq y,\ \frac{|B(x,y)|}{|x-y|^{\frac{p+\gamma}{p}}}\geq (1-\alpha) \lambda^{\frac{1}{p}}\right\}.$$
Then
\begin{align}\label{eq:joining2}
E_w\subset E_A\cup E_B.
\end{align}
By convexity of the function $f(t):=|t|^p$ in $\mathbb{R}$, we find that, for any $x,y\in U,$
\begin{align*}
|A(x,y)|^p\leq\eta(x)|u(x)-u(y)|^p+[1-\eta(x)]|v(x)-v(y)|^p.
\end{align*}
This further implies that
\begin{align}\label{eq:joining}
E_A\subset E_{A,1}\cup E_{A,2},
\end{align}
where
\begin{align*}
E_{A,1}:=\left\{(x,y)\in U\times U:\ x\neq y,\ \frac{|u(x)-u(y)|^p}{|x-y|^{p+\gamma}}\geq\alpha^p\lambda\right\}
\end{align*}
and
\begin{align*}
 E_{A,2}:=\left\{(x,y)\in E_\eta\times U:\ x\neq y,\ \frac{|v(x)-v(y)|^p}{|x-y|^{p+\gamma}}\geq\alpha^p\lambda\right\}.
\end{align*}
From the definition of $L_\eta$, we deduce that, for any $(x,y)\in E_B$,
\begin{align*}
(1-\alpha)^p\lambda|x-y|^{p+\gamma}\leq|\eta(x)-\eta(y)|^p|u(y)-v(y)|^p
\leq L_\eta^p|x-y|^p|u(y)-v(y)|^p
\end{align*}
and hence
\begin{align*}
|x-y|^\gamma\leq\frac{L_\eta^p}{(1-\alpha)^p\lambda}|u(y)-v(y)|^p,
\end{align*}
which implies that
\begin{align*}
\lambda\iint_{E_B}|x-y|^{\gamma-N}\,dy\,dx
\leq\frac{\omega_N L_\eta^p\|u-v\|^p_{L^p(U)}}{\gamma(1-\alpha)^p}.
\end{align*}
By this, \eqref{eq:joining}, and \eqref{eq:joining2}, we conclude that \eqref{eq:pcutoff} holds.
This completes the proof of Lemma \ref{lem:pcutoff}.
\end{proof}

Now, we prove
Proposition \ref{prop:prepair}.
\begin{proof}[Proof of Proposition \ref{prop:prepair}]
Let $\{\widetilde\lambda_n\}_{n\in\mathbb N}\subset(0,\infty)$ and $\{v_n\}_{n\in\mathbb N}
\subset L^p(Q_0)$ be the same as in Lemma \ref{lem:cellbds}(ii). Fix $j\in\mathbb{N}$
and define the truncation function $T_j$ by setting, for any $t\in\mathbb R$,
\begin{align*}
T_j(t)=\begin{cases}
-j, & t\in(-\infty,-j),\\
t,  & t\in[-j,j],\\
j,  & t\in(j,\infty).
\end{cases}
\end{align*}
It is easy to check that  $T_j$ is $1$-Lipschitz; i.e., for any $t_1,t_2\in\mathbb{R},$
$|T_j(t_1)-T_j(t_2)|\leq |t_1-t_2|.$ From this and the fact that $T_j(\ell)=\ell$ on $Q_0$,
we infer that, for any $n\in\mathbb{N},$
\begin{align*}
|T_j(v_n)-\ell|=|T_j(v_n)-T_j(\ell)|\leq|v_n-\ell|.
\end{align*}
This further implies that
\begin{align*}
\lim_{n\to\infty}T_j(v_n)=\ell\ \ \text{in}\ \ L^p(Q_0).
\end{align*}
Moreover, for any $\lambda\in (0,\infty)$,
\begin{align*}
G_{\lambda,p,\gamma}\left(T_j(v_n);Q_0\right)\leq G_{\lambda,p,\gamma}\left(v_n;Q_0\right).
\end{align*}
Define the $1$-Lipschitz function $\theta:\mathbb R\to[0,1]$ by setting, for any $t\in\mathbb R$,
\begin{align*}
\theta(t):=
\begin{cases}
0,&t\in(-\infty,3],\\
t-3,&t\in(3,4),\\
1,&t\in[4,\infty),
\end{cases}
\end{align*}
and, for any $z\in Q_0$, let
\begin{align*}
\eta_j(z):=\theta\left(\frac{\operatorname{dist}(z,\partial Q_0)}{\rho_j}\right)=\begin{cases}
0,&\operatorname{dist}(z,\partial Q_0)\leq 3\rho_j,\\[1mm]
\displaystyle\frac{\operatorname{dist}(z,\partial Q_0)}{\rho_j}-3,
&3\rho_j<\operatorname{dist}(z,\partial Q_0)<4\rho_j,\\[2mm]
1,&\operatorname{dist}(z,\partial Q_0)\geq 4\rho_j.
\end{cases}
\end{align*}
where $\rho_j:=\frac{1}{100j}$ for any $j\in\mathbb{N}.$
Then, for any $z,w\in Q_0$,
\begin{align*}
|\eta_j(z)-\eta_j(w)|&=\left|\theta\left(
\frac{\operatorname{dist}(z,\partial Q_0)}{\rho_j}\right)-\theta\left(
\frac{\operatorname{dist}(w,\partial Q_0)}{\rho_j}\right)\right|\\
&\leq\left|\frac{\operatorname{dist}(z,\partial Q_0)}{\rho_j}-
\frac{\operatorname{dist}(w,\partial Q_0)}{\rho_j}\right|\leq \frac{|z-w|}{\rho_j}.
\end{align*}
Thus, $\eta_j$ is a Lipschitz function on  $Q_0$ with Lipschitz constant at most $\frac{1}{\rho_j}$.
For any $j\in\mathbb N$, let
\begin{align*}
E_j:=\{z\in Q_0:\eta_j(z)<1\}.
\end{align*}
Then $E_j$ is contained in $\{z\in Q_0:\ \operatorname{dist}(z,\partial Q_0)<4\rho_j\}.$ Thus,
$|E_j|\leq C_N\rho_j$.

Now, for any $j,n\in\mathbb{N}$, let
\begin{align*}
 V_{j,n}:=\eta_jT_j(v_n)+(1-\eta_j)\ell\ \ \text{and}\ \ \lambda_{j,n}:=\frac{\widetilde\lambda_n}{\alpha_j^p},
\end{align*}
where $\{\alpha_j\}_{j\in\mathbb{N}}\subset (0,1)$ satisfies $\lim_{j\to\infty}\alpha_j=1.$
Applying Lemma \ref{lem:pcutoff}, we find that, for any $j,n\in\mathbb{N}$,
\begin{align}\label{eq:repairbd}
G_{\lambda_{j,n},p,\gamma}(V_{j,n};Q_0)&\leq
\frac{G_{\alpha_j^p\lambda_{j,n},p,\gamma}(T_j(v_n);Q_0)}{\alpha_j^p}
+\frac{\mathcal G_{\alpha_j^p\lambda_{j,n},p,\gamma}(\ell;E_j,Q_0)}{\alpha_j^p}
+\frac{\omega_N\|T_j(v_n)-\ell\|_{L^p(Q_0)}^p}{\gamma(1-\alpha_j)^p\rho_j^p}\notag\\
&\leq\frac{G_{\widetilde\lambda_n,p,\gamma}(v_n;Q_0)}{\alpha_j^p}+\frac{C_{N,\gamma}\rho_j}{\alpha_j^p}
+\frac{\omega_N\|T_j(v_n)-\ell\|_{L^p(Q_0)}^p}{\gamma(1-\alpha_j)^p\rho_j^p}.
\end{align}
Note that, for any given $j\in\mathbb{N}$, $\lim_{n\to\infty}V_{j,n}=\ell$ in $L^p(Q_0)$,
$\lim_{n\to\infty}\lambda_{j,n}=\infty$, and
\begin{align*}
\lim_{n\to\infty}\frac{\|V_{j,n}-\ell\|^p_{L^\infty(Q_0)}}{\lambda_{j,n}\rho_{j}^{p+\gamma}}\leq
\lim_{n\to\infty}\frac{(j+\frac{1}{2})^p}{\lambda_{j,n}\rho_{j}^{p+\gamma}}=0.
\end{align*}
This, combined with \eqref{eq:repairbd},  implies that, for any  given $j\in\mathbb{N}$, there exists
$n(j)\in\mathbb{N}$ such that
\begin{align*}
\lambda_{j,n(j)}\rho_j^\gamma\geq j,\ \
\frac{\|V_{j,n(j)}-\ell\|_{L^\infty(Q_0)}^p}{\lambda_{j,n(j)}\rho_j^{p+\gamma}}\leq \frac{1}{j},
\ \ \|V_{j,n(j)}-\ell\|_{L^p(Q_0)}\leq\frac1j,
\end{align*}
and
\begin{align}\label{eq:repairdiag}
G_{\lambda_{j,n(j)},p,\gamma}(V_{j,n(j)};Q_0)\leq \frac{C_{N,p,\gamma}^{\mathrm{cell}}+\frac{1}{j}}{\alpha_j^p}
+\frac{C_{N,\gamma}\rho_j}{\alpha_j^p}+\frac1j.
\end{align}
Finally, for any $j\in\mathbb{N}$, let
$\Lambda_j:=\lambda_{j,n(j)}$ and $V_j:=V_{j,n(j)}$.
Letting $j\to\infty$ in  \eqref{eq:repairdiag}, we conclude that
\begin{align*}
\limsup_{j\to\infty}G_{\Lambda_j,p,\gamma}(V_j;Q_0)\leq C_{N,p,\gamma}^{\mathrm{cell}}.
\end{align*}
On the other hand, since $C^{\mathrm{cell}}_{N,p,\gamma}=m_{p,\gamma}(\ell)$, it follows that
\begin{align*}
C_{N,p,\gamma}^{\mathrm{cell}}\leq\liminf_{j\to\infty}G_{\Lambda_j,p,\gamma}(V_j;Q_0)
\end{align*}
and hence \eqref{eq:pah1} holds. The other assertions of the present
proposition holds  naturally by the choice of $n(j)$. This completes the proof
of Proposition \ref{prop:prepair}.
\end{proof}

\subsection{Recovery Sequences for Continuous Piecewise Affine Functions}\label{subsec:parec}

In this subsection, we construct recovery sequences for continuous
piecewise affine functions. We first recall the concept of piecewise-affine functions.

\begin{definition}\label{def:pa}
Let $N\in\mathbb N$, $a\in\mathbb R^N\setminus\{\mathbf{0}\}$, and $c\in\mathbb R$. A set
of the form $\{x\in\mathbb R^N:a\cdot x\leq c\}$ is called a \emph{closed affine half-space} in $\mathbb R^N$.
A \emph{closed convex polyhedron} $P$ in $\mathbb R^N$ is an intersection of
finitely many closed affine half-spaces with nonempty interior.
Let $U\subset\mathbb R^N$ be a bounded convex open set. A continuous function
$w:U\to\mathbb R$ is called \emph{continuous piecewise affine} if there are finitely
many closed convex polyhedrons $\{P_i\}_{i=1}^s$ with pairwise disjoint interiors, vectors
$\{\xi_i\}_{i=1}^s$, and  numbers $\{b_i\}_{i=1}^s$ such that $U\subset\bigcup_{i=1}^s P_i$
and, for any  $i\in\{1,\ldots,s\}$ and any $x\in U\cap P_i$,
\begin{align}\label{eq:paaff}
w(x)=\xi_i\cdot x+b_i.
\end{align}
\end{definition}
\begin{remark}
Let $U\subset \mathbb{R}^N $ be a bounded convex open set and
$w:U\to\mathbb R$ be continuous  piecewise affine. By the  assumption that $U$ is
convex and \eqref{eq:paaff}, we conclude  that, for any $x,y\in U,$
\begin{align}\label{eq:palip}
|w(x)-w(y)|\leq L|x-y|,
\end{align}
where $L:=\max_{1\leq i\leq s}|\xi_i|$.
\end{remark}

By the following conclusion, we  find that the recovery sequences for piecewise-affine functions exists.
\begin{proposition}\label{prop:pa}
Let $N\in\mathbb N$, $p\in [1,\infty)$, $\gamma\in(0,\infty)$, and $\Omega\subset\mathbb{R}^N$ be a bounded open set.
Assume that $U\supset\overline\Omega$ is a bounded convex open set and
$w:U\to\mathbb R$ is continuous piecewise affine. Let
$\{\lambda_j\}_{j\in\mathbb N}\subset(0,\infty)$ satisfy $\lambda_j\to\infty$.
Then there exists $\{w_j\}_{j\in\mathbb N}\subset L^p(\Omega)$ satisfying
$w_j\to w$ in $L^p(\Omega)$ and
\begin{align}\label{eq:parec}
\limsup_{j\to\infty}G_{\lambda_j,p,\gamma}(w_j;\Omega)
\leq C_{N,p,\gamma}^{\mathrm{cell}}\int_\Omega|\nabla w|^p\,dx.
\end{align}
\end{proposition}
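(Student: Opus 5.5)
We will build $w_j$ by tiling $\mathbb R^N$ with small cubes of side $\delta$, which will later be sent to zero along a diagonal sequence. On each cube that lies inside the interior of a single polyhedron $P_i$ with $\xi_i\neq\mathbf 0$, we insert a rescaled, rotated copy of the boundary-modified cell sequence from Proposition~\ref{prop:prepair}. Everywhere else we keep $w_j=w$.

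\textbf{Construction.} Fix $i$ with $\xi_i\neq\mathbf0$ and choose $R_i\in\mathcal O(N)$ with $R_ie_N=\xi_i/|\xi_i|$. Work with cubes $Q=x_Q+\delta R_iQ_0$ aligned with $R_i$; the orientation varies with $i$, so we tile each $P_i$ separately. On each such cube with $Q\subset\operatorname{int}P_i$, set
\begin{align*}
w_j(x_Q+\delta R_iz):=|\xi_i|\,\delta\, V_{k}(z)+w(x_Q),\qquad z\in Q_0,
\end{align*}
where $V_k$ and $\Lambda_k$ are taken from Proposition~\ref{prop:prepair}. Note that $|\xi_i|\delta\,\ell(z)+w(x_Q)=w(x_Q+\delta R_iz)$. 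By Lemma~\ref{lem:scaling},
\begin{align*}
G_{\lambda_j,p,\gamma}(w_j;Q)=|\xi_i|^p\delta^N G_{\delta^\gamma\lambda_j/|\xi_i|^p,p,\gamma}(V_k;Q_0).
\end{align*}
Given $\delta$, we choose $k=k(j)$ with $\Lambda_k=\delta^\gamma\lambda_j/|\xi_i|^p$, up to an admissible slack: $\Lambda_k$ can be adjusted monotonically since raising $\lambda$ only shrinks the level set while the prefactor $\lambda$ grows. To avoid this subtlety, we could instead re-run Proposition~\ref{prop:prepair}'s diagonal argument along the prescribed sequence $\delta^\gamma\lambda_j/|\xi_i|^p\to\infty$. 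Summing over cubes gives the main term
\begin{align*}
\sum_Q|\xi_i|^p\delta^N\bigl(C^{\mathrm{cell}}_{N,p,\gamma}+o(1)\bigr)\le C^{\mathrm{cell}}_{N,p,\gamma}\int_\Omega|\nabla w|^p\,dx+o(1)
\end{align*}
(only cubes meeting $\Omega$ matter). Cubes straddling interfaces or $\partial\Omega$ have total measure $O(\delta)$. There $w_j=w$ is $L$-Lipschitz by \eqref{eq:palip}, so by Lemma~\ref{lem:pcutoff}(i) with $\ell$ replaced by $w/L$ and scaling (Lemma~\ref{lem:scaling}), their contribution is $\lesssim L^p\delta$. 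Convergence $w_j\to w$ in $L^p$ follows from $\|w_j-w\|_{L^\infty}\le L\delta\|V_k-\ell\|_{L^\infty}$ together with $\|V_k-\ell\|_{L^p}\to0$; in fact the $L^p$ error is at most $L\delta$ times $o(1)$.

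\textbf{Cross interactions, the main obstacle.} We must bound $\lambda_j\iint$ over pairs $(x,y)$ in different cubes, or with one point in a modified cube and the other outside. Since $w_j=w$ on the $\rho_k\delta$-collar of each cube by \eqref{eq:pah2}, consider such a pair with $x$ in the inner part of $Q$. Then either $|x-y|\ge\rho_k\delta$, or $y\in Q$, which is already counted. In the first case,
\begin{align*}
|w_j(x)-w_j(y)|\le L|x-y|+2L\delta\|V_k-\ell\|_{L^\infty},
\end{align*}
using that $w$ is $L$-Lipschitz. The exceedance condition $|w_j(x)-w_j(y)|^p\ge\lambda_j|x-y|^{p+\gamma}$ then forces either $|x-y|^\gamma\lesssim L^p/\lambda_j$, or
\begin{align*}
\lambda_j(\rho_k\delta)^{p+\gamma}\lesssim L^p\delta^p\|V_k-\ell\|_\infty^p.
\end{align*}
In rescaled variables the latter reads $\|V_k-\ell\|^p_\infty/(\Lambda_k\rho_k^{p+\gamma})\gtrsim1$, which is excluded for large $k$ by \eqref{eq:pah3}. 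In the former case, $|x-y|\lesssim\lambda_j^{-1/\gamma}$, which is $\ll\rho_k\delta$ because $\Lambda_k\rho_k^\gamma\to\infty$ by \eqref{eq:pah3}; this contradicts $|x-y|\ge\rho_k\delta$. Hence no cross-cube exceedances occur from inner points. Pairs with both points in collars, or outside modified cubes, see $w_j=w$ exactly, and their energy is bounded by Lemma~\ref{lem:pcutoff}(i) applied to a neighbourhood of size $\lambda_j^{-1/\gamma}$. I will have to handle this more carefully: collar-to-collar pairs in different cubes contribute at most $C(L^p/\gamma)\,|\text{collars}|\lesssim L^p\rho_k$, which tends to $0$.

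Finally, letting $j\to\infty$ and then $\delta\to0$, a diagonal choice $\delta=\delta_j$ gives \eqref{eq:parec}.
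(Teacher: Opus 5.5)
There is a genuine gap in the main term. With the side length $\delta$ fixed, Lemma~\ref{lem:scaling} turns $G_{\lambda_j,p,\gamma}(w_j;Q)$ into a cell energy at the \emph{prescribed} parameters $\mu_j:=\delta^\gamma\lambda_j/|\xi_i|^p$. Proposition~\ref{prop:prepair}, however, provides near-optimal cell functions only along its own sequence $\{\Lambda_k\}_{k\in\mathbb N}$, and in general no $\Lambda_k$ equals $\mu_j$.

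Neither of your remedies closes this. First, the map $\lambda\mapsto G_{\lambda,p,\gamma}(v;Q_0)$ is not monotone. The only general comparison is $G_{\mu,p,\gamma}(v;Q_0)\le(\mu/\Lambda)\,G_{\Lambda,p,\gamma}(v;Q_0)$ for $\mu\ge\Lambda$. This costs the factor $\mu_j/\Lambda_{k(j)}$, and nothing forces that factor to tend to $1$, since $\{\Lambda_k\}$ may have arbitrarily large ratio gaps. Second, re-running the diagonal argument of Proposition~\ref{prop:prepair} along $\{\mu_j\}$ is circular. That argument starts from Lemma~\ref{lem:cellbds}(ii), which realizes $C_{N,p,\gamma}^{\mathrm{cell}}$ only along \emph{some} sequence $\widetilde\lambda_n\to\infty$, because $C_{N,p,\gamma}^{\mathrm{cell}}$ is an infimum over all sequences. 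That the cell constant is attained along every sequence $\lambda_j\to\infty$ is exactly the special case $\Omega=Q_0$, $w=\ell$ of the proposition you are proving. Your collar step also relies on the exact identity $\lambda_j\delta^\gamma=|\xi_i|^p\Lambda_k$ to convert the exceedance condition into \eqref{eq:pah3}, so it is unsupported as well.

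The missing idea is to reverse the dependence. Choose $k_j\to\infty$ with $\Lambda_{k_j}/\lambda_j\to0$, which is possible because $\lambda_j\to\infty$. Then let the side length be dictated by the matching relation, $h_{j,i}:=(|\xi_i|^p\Lambda_{k_j}/\lambda_j)^{1/\gamma}\to0$, separately for each piece. Nothing has to tile exactly. One modifies only the cubes $h_{j,i}R_i(m+Q_0)$, $m\in\mathbb Z^N$, whose closures lie in $D_i:=\Omega\cap\operatorname{int}P_i$. The uncovered part of $D_i$ then has measure $o(1)$ by continuity of measure. For a general bounded open $\Omega$ this layer is $o(1)$, not $O(\delta)$, which is why cubes crossing $\partial\Omega$ should not be modified. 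Lemma~\ref{lem:scaling} now gives exactly $|\xi_i|^ph_{j,i}^N\,G_{\Lambda_{k_j},p,\gamma}(V_{k_j};Q_0)$ per cube, and your collar argument goes through verbatim with $\rho_{k_j}h_{j,i}$ in place of $\rho_k\delta$. This is the paper's proof.

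Your choice to leave the pieces with $\xi_i=\mathbf 0$ unmodified is a legitimate alternative to the paper's perturbation $w+\varepsilon_mq\cdot x$ followed by a diagonal argument. The collar estimate only needs $|\xi_i|\ge\min\{|\xi_r|:\xi_r\neq\mathbf 0\}$ at the inner point. However, those pieces have positive measure, so the crude bound $\frac{\omega_N}{\gamma}L^p|E|$ does not vanish there. You must use that $w$ is constant on each such piece, so that only pairs within distance $(L^p/\lambda_j)^{1/\gamma}$ of $\partial P_i$ can reach the threshold; these pairs contribute $o(1)$.
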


\begin{proof}
Let $\{P_i\}_{i=1}^s$, $\{\xi_i\}_{i=1}^s$, and
$\{b_i\}_{i=1}^s$ be as in Definition \ref{def:pa}.
For any $i\in\{1,\ldots,s\}$, let
$D_i:=\Omega\cap \operatorname{int}(P_i).$
Since the interiors of the polyhedra are pairwise disjoint and
$U\subset\bigcup_{i=1}^sP_i$, it follows that
\begin{align}\label{eq:papart}
 \left|\Omega\setminus\left(\bigcup_{i=1}^sD_i\right)\right|=0.
\end{align}
We first show the present proposition under the additional assumption
\begin{align}\label{eq:papos}
 a_*:=\min_{1\leq i\leq s}|\xi_i|>0.
\end{align}
Let $\{\Lambda_j\}_{j\in\mathbb{N}}\subset (0,\infty),$
$\{V_j\}_{j\in\mathbb{N}}\subset L^\infty (Q_0)$, and $\{\rho_j\}_{j\in\mathbb{N}}\subset (0,\infty)$
be as in Proposition \ref{prop:prepair}.
By the fact that $\lim_{j\to\infty}\lambda_j=\infty$,  we find that  there exists  a sequence
$\{k_j\}_{j\in\mathbb{N}}\subset \mathbb{N}$ such that
\begin{align}\label{eq:paidx}
\lim_{j\to\infty}k_j=\infty\quad\text{and}\quad
\lim_{j\to\infty}\frac{\Lambda_{k_j}}{\lambda_j}=0.
\end{align}
Indeed, for any $j\in\mathbb{N}$, define
\begin{align*}
\mathcal K_j:=\left\{k\in\mathbb N: k\leq j\text{ and }
\Lambda_k\leq\sqrt{\lambda_j}\right\}\quad\text{and}\quad
k_j:=\max_{k\in\mathcal{K}_j}k.
\end{align*}
Moreover, if $\mathcal K_j=\emptyset$, define $k_j:=1$.
Since $\lim_{j\to\infty}\lambda_j=\infty$, it follows that, for all
sufficiently large $j$, $\mathcal K_j\neq\emptyset$ and, for any given
$K\in\mathbb{N}$, $K\in\mathcal K_j$ and $k_j\geq K$ for all sufficiently
large $j$.  This implies that \eqref{eq:paidx} holds.
For any $j\in\mathbb N$, let $\widehat\Lambda_j:=\Lambda_{k_j}$,
$\widehat V_j:=V_{k_j}$, and $\widehat\rho_j:=\rho_{k_j}$.
Since $\lim_{j\to\infty}\widehat{\rho}_j=0,$
after discarding finitely many indices,  we may assume that, for any $j\in\mathbb{N},$
$0<\widehat\rho_j<\frac{1}{2}$. Let  $i\in\{1,\ldots,s\}$ and $j\in\mathbb{N}.$ Define
\begin{align}\label{eq:pah}
h_{j,i}:=\left(\frac{|\xi_i|^p\widehat\Lambda_j}{\lambda_j}\right)^{\frac{1}{\gamma}}.
\end{align}
By \eqref{eq:paidx}, we find that
\begin{align}\label{eq:pahzero}
\lim_{j\to\infty}\max_{1\leq i\leq s}h_{j,i}=0.
\end{align}
Choose  matrix $R_i\in\mathcal O(N)$ such that $R_ie_N=\frac{\xi_i}{|\xi_i|},$
where $e_N:=(0,\ldots,1)$. For any $k\in\mathbb Z^N$, define
\begin{align}\label{eq:shui}
Q_{j,i,k}:=h_{j,i}R_i(k+Q_0)=h_{j,i}R_ik+h_{j,i}R_iQ_0.
\end{align}
Let $\mathscr Q_{j,i}$ denote the finite family of those cubes whose
closures are contained in $D_i$. Let $Q\in\mathscr Q_{j,i}$. From \eqref{eq:shui}, we infer that
$Q=x_Q+h_{j,i}R_iQ_0$, where $x_Q$ is the center of $Q$.
Thus, for any $x\in Q$, there exists a unique $z\in Q_0$ such that
$x=x_Q+h_{j,i}R_iz$. In what follows, for $Q\in\mathscr Q_{j,i}$ and $x\in Q$, the
corresponding point $z\in Q_0$ is always understood to be given by
$z=R_i^{-1}\frac{x-x_Q}{h_{j,i}}$, or equivalently $x=x_Q+h_{j,i}R_iz$.
Recall that, for any $z\in\mathbb{R}^N,$ $\ell(z):=z_N.$
By \eqref{eq:paaff} and the definition of $R_i$, we conclude that, for any $x\in Q$,
\begin{align}\label{eq:biaodashi1}
w(x)&=w(x_Q)+\xi_i\cdot(x-x_Q)=w(x_Q)+h_{j,i}\xi_i\cdot R_i z
=w(x_Q)+h_{j,i}(R_i^{-1}\xi_i)\cdot z\notag\\
&=w(x_Q)+|\xi_i|h_{j,i}e_N\cdot z=w(x_Q)+|\xi_i|h_{j,i}\ell(z).
\end{align}
Now define $w_j$ as follows.  For any $i\in\{1,\ldots,s\}$, $Q\in\mathscr Q_{j,i}$,
and $x\in Q$, let
\begin{align}\label{eq:padef}
w_j(x)&:=w(x_Q)+|\xi_i|h_{j,i}\widehat V_j
\left(R_i^{-1}\frac{x-x_Q}{h_{j,i}}\right)=w(x_Q)+|\xi_i|h_{j,i}\widehat V_j(z),
\end{align}
and, for any
\begin{align*}
x\in\Omega\setminus\left(\bigcup_{i=1}^s\bigcup_{Q\in\mathscr Q_{j,i}}Q\right),
\end{align*}
let $w_j(x)=w(x)$. Let $n_{j,i}:=\#\mathscr Q_{j,i},$ which denotes the number of cubes in the finite
family $\mathscr Q_{j,i}$. Since $\mathscr Q_{j,i}$ consists of cubes of side length $h_{j,i}$
whose closures are contained in $D_i$, it follows that, for any $i\in\{1,\ldots,s\}$,
\begin{align*}
D_i\setminus\left(\bigcup_{Q\in\mathscr Q_{j,i}}\overline{Q}\right)\subset\left\{
x\in D_i:\operatorname{dist}(x,\mathbb R^N\setminus D_i)\leq\sqrt N\,h_{j,i}\right\}.
\end{align*}
This, together with \eqref{eq:pahzero}, implies that
\begin{align}\label{eq:pafill}
\lim_{j\to\infty}\left|D_i\setminus\left(\bigcup_{Q\in\mathscr Q_{j,i}}Q\right)\right|=0
\end{align}
and hence
\begin{align}\label{eq:pavol}
\lim_{j\to\infty}n_{j,i}h_{j,i}^N=|D_i|.
\end{align}
From this,  the change-of-variables formula, \eqref{eq:padef}, \eqref{eq:pahzero},
and the fact that $\lim_{j\to\infty}\widehat{V}_j=\ell$ in $L^p(Q_0),$ we deduce that
\begin{align*}
\|w_j-w\|_{L^p(\Omega)}^p&=\sum_{i=1}^s\sum_{Q\in\mathscr Q_{j,i}}
|\xi_i|^ph_{j,i}^{N+p}\|\widehat V_j-\ell\|_{L^p(Q_0)}^p\\
&=\sum_{i=1}^s|\xi_i|^pn_{j,i}h_{j,i}^{N+p}\|\widehat V_j-\ell\|_{L^p(Q_0)}^p\longrightarrow0
\end{align*}
as $j\to\infty$. Thus,  to prove the  present proposition under the assumption \eqref{eq:papos},
it remains to show \eqref{eq:parec}. Let
\begin{align*}
Q^{\rm in}:=\left\{x\in Q:\operatorname{dist}(x,\partial Q)\geq
\widehat\rho_jh_{j,i}\right\}
\end{align*}
and
\begin{align*}
E_j:=\Omega\setminus\left(\bigcup_{i=1}^s\bigcup_{Q\in\mathscr Q_{j,i}}Q^{\rm in}\right).
\end{align*}
Observe that, for any $x\in Q$ with $Q\in\mathscr Q_{j,i}$,
\begin{align*}
\operatorname{dist}(x,\partial Q)=h_{j,i}\operatorname{dist}(z,\partial Q_0).
\end{align*}
This, combined with \eqref{eq:pah2}, implies that, for any $x\in Q$
with $\operatorname{dist}(x,\partial Q)<\widehat\rho_jh_{j,i},$
$w_j(x)=w(x)$ and hence
\begin{align}\label{eq:pae}
w_j=w\quad\text{on }E_j.
\end{align}
Note that,  for any $Q\in\mathscr Q_{j,i}$,
\begin{align*}
|Q\setminus Q^{\rm in}|=h_{j,i}^N\left[1-(1-2\widehat\rho_j)^N\right].
\end{align*}
Using this and \eqref{eq:papart}, we obtain that
\begin{align*}
|E_j|&=\sum_{i=1}^s\left|D_i\setminus\bigcup_{Q\in\mathscr Q_{j,i}}Q\right|
+\sum_{i=1}^s\sum_{Q\in\mathscr Q_{j,i}}|Q\setminus Q^{\rm in}|\\
&=\sum_{i=1}^s\left|D_i\setminus\bigcup_{Q\in\mathscr Q_{j,i}}Q\right|
+\left[1-(1-2\widehat\rho_j)^N\right]\sum_{i=1}^s n_{j,i}h_{j,i}^N.
\end{align*}
This, together with \eqref{eq:pafill}, \eqref{eq:pavol}, and the fact
that $\widehat\rho_j\to0$, implies that
\begin{align}\label{eq:haozi}
\lim_{j\to\infty}|E_j|=0.
\end{align}
Let
$$\mathcal A_j:=\bigcup_{i=1}^s\bigcup_{Q\in\mathscr Q_{j,i}}(Q\times Q)$$ and
\begin{align*}
\mathcal T_j:=\left\{(x,y)\in\Omega\times\Omega:\ x\neq y,\
\frac{|w_j(x)-w_j(y)|^p}{|x-y|^{p+\gamma}}\geq\lambda_j\right\}.
\end{align*}
We claim that, for all sufficiently large $j$,
\begin{align}\label{eq:painc}
\mathcal T_j\subset\mathcal A_j\cup(E_j\times E_j).
\end{align}
Assuming this claim for the moment, we continue with the proof. Let $j$ be  as above.
Using Lemma~\ref{lem:scaling} and \eqref{eq:pah}, we have
\begin{align}\label{eq:pacell}
\lambda_j\iint_{\mathcal{T}_j\cap (Q\times Q)}|x-y|^{\gamma-N}\,dx\,dy
=G_{\lambda_j,p,\gamma}(w_j;Q)
=|\xi_i|^ph_{j,i}^NG_{\widehat\Lambda_j,p,\gamma}(\widehat V_j;Q_0).
\end{align}
On the other hand, by \eqref{eq:pae} and \eqref{eq:palip}, we conclude that
\begin{align*}
&\lambda_j\iint_{\mathcal T_j\cap(E_j\times E_j)}|x-y|^{\gamma-N}\,dx\,dy\\
&\quad=\lambda_j\int_{E_j}\int_{E_j}\mathbf 1_{\left\{(x,y)\in E_j\times E_j:\ x\neq y,\ \frac{|w(x)-w(y)|^p}{|x-y|^{p+\gamma}}\geq \lambda_j\right\}}
|x-y|^{\gamma-N}\,dy\,dx\\
&\quad\leq\lambda_j\int_{E_j}\int_{E_j}\mathbf 1_{\left\{(x,y)\in E_j\times E_j:\ x\neq y,\ \frac{L^p}{|x-y|^{\gamma}}\geq \lambda_j\right\}}
|x-y|^{\gamma-N}\,dy\,dx
\leq\frac{\omega_N}{\gamma}L^p|E_j|.
\end{align*}
From this, \eqref{eq:painc}, and \eqref{eq:pacell},   we infer that
\begin{align*}
&G_{\lambda_j,p,\gamma}(w_j;\Omega)\\
&\quad
\leq
\lambda_j\iint_{\mathcal T_j\cap\mathcal A_j}|x-y|^{\gamma-N}\,dx\,dy
+\lambda_j\iint_{\mathcal T_j\cap(E_j\times E_j)}|x-y|^{\gamma-N}\,dx\,dy\\
&\quad\leq
\sum_{i=1}^s\sum_{Q\in\mathscr Q_{j,i}}\lambda_j
\iint_{\mathcal{T}_j\cap (Q\times Q)}|x-y|^{\gamma-N}\,dx\,dy\notag
+\frac{\omega_N}{\gamma}L^p|E_j|\\
&\quad\leq G_{\widehat\Lambda_j,p,\gamma}(\widehat V_j;Q_0)
\sum_{i=1}^s|\xi_i|^pn_{j,i}h_{j,i}^N
+\frac{\omega_N}{\gamma}L^p|E_j|.
\end{align*}
Letting $j\to\infty$ and using  \eqref{eq:pah1}, \eqref{eq:pavol}, and \eqref{eq:haozi}, we find that
\begin{align*}
&\limsup_{j\to\infty}G_{\lambda_j,p,\gamma}(w_j;\Omega)\\
&\quad\leq\lim_{j\to\infty}G_{\widehat\Lambda_j,p,\gamma}(\widehat V_j;Q_0)
\sum_{i=1}^s|\xi_i|^pn_{j,i}h_{j,i}^N
+\lim_{j\to\infty}\frac{\omega_N}{\gamma}L^p|E_j|\\
&\quad=C_{N,p,\gamma}^{\mathrm{cell}}\sum_{i=1}^s|\xi_i|^p|D_i|
=C_{N,p,\gamma}^{\mathrm{cell}}\int_\Omega|\nabla w|^p\,dx.
\end{align*}
This proves \eqref{eq:parec} under the assumption \eqref{eq:papos}.

Now, we show the claim \eqref{eq:painc}. Let $(x,y)\in \mathcal T_j $.
Suppose, to the contrary, that $ (x,y)\not\in\mathcal A_j\cup(E_j\times E_j).$
By symmetry, we may assume that $x\notin E_j$.  Then there exist $i\in\{1,\ldots,s\}$
and $Q\in\mathscr Q_{j,i}$ such that $x\in Q^{\rm in}$. Since $(x,y)\notin\mathcal A_j$,
it follows that  $y\notin Q$.  This implies that
\begin{align}\label{eq:ds}
|x-y|\geq\operatorname{dist}(x,\partial Q)\geq\widehat\rho_jh_{j,i}.
\end{align}
On the other hand, from \eqref{eq:padef} and \eqref{eq:biaodashi1}, it follows that
\begin{align*}
|w_j(x)-w(x)|\leq|\xi_i|h_{j,i}\|\widehat V_j-\ell\|_{L^\infty(Q_0)}.
\end{align*}
Using this, \eqref{eq:ds}, and the fact that $\lambda_jh_{j,i}^\gamma=|\xi_i|^p\widehat\Lambda_j$, we obtain that
\begin{align}\label{eq:wewill}
\frac{|w_j(x)-w(x)|^p}{\lambda_j|x-y|^{p+\gamma}}\leq\frac{
|\xi_i|^ph_{j,i}^p\|\widehat V_j-\ell\|_{L^\infty(Q_0)}^p}{
\lambda_j\widehat\rho_j^{p+\gamma}h_{j,i}^{p+\gamma}}
=\frac{|\xi_i|^p\|\widehat V_j-\ell\|_{L^\infty(Q_0)}^p}{
\lambda_j\widehat\rho_j^{p+\gamma}h_{j,i}^\gamma}
=\frac{\|\widehat V_j-\ell\|_{L^\infty(Q_0)}^p}{\widehat\Lambda_j\widehat\rho_j^{p+\gamma}}.
\end{align}
If $y\in E_j$, then, by \eqref{eq:pae}, we find that $w_j(y)-w(y)=0$.  If $y\notin E_j$, there exist $r\in\{1,\ldots,s\}$ and
$Q'\in\mathscr Q_{j,r}$ such that $y\in(Q')^{\rm in}$. From this and an argument similar to that used in  the
proof of \eqref{eq:wewill}, we deduce that
\begin{align*}
\frac{|w_j(y)-w(y)|^p}{\lambda_j|x-y|^{p+\gamma}}\leq
\frac{\|\widehat V_j-\ell\|_{L^\infty(Q_0)}^p}
 {\widehat\Lambda_j\widehat\rho_j^{p+\gamma}}.
\end{align*}
By this, \eqref{eq:wewill}, \eqref{eq:ds}, and  \eqref{eq:palip}, we conclude that
\begin{align*}
\frac{|w_j(x)-w_j(y)|^p}{\lambda_j|x-y|^{p+\gamma}}&\leq3^{p-1}
\left(\frac{L^p}{\lambda_j|x-y|^\gamma}+\frac{|w_j(x)-w(x)|^p}{\lambda_j|x-y|^{p+\gamma}}
+\frac{|w_j(y)-w(y)|^p}{\lambda_j|x-y|^{p+\gamma}}\right)\\
&\leq3^{p-1}\left(\frac{L^p}{a_*^p\widehat\Lambda_j\widehat\rho_j^\gamma}
+\frac{2\|\widehat V_j-\ell\|_{L^\infty(Q_0)}^p}{\widehat\Lambda_j\widehat\rho_j^{p+\gamma}}
\right)=:\varepsilon_j,
\end{align*}
where $L:=\max_{1\leq i\leq s}|\xi_i|$ is as in \eqref{eq:palip}.
Using this and \eqref{eq:pah3}, we have $\lim_{j\to\infty}\varepsilon_j=0.$ However, since
$(x,y)\in\mathcal T_j$, it follows that
\begin{align*}
1\leq\frac{|w_j(x)-w_j(y)|^p}{\lambda_j|x-y|^{p+\gamma}}.
\end{align*}
This contradicts  that $\lim_{j\to\infty}\varepsilon_j=0.$ Thus, $(x,y)\in \mathcal A_j\cup(E_j\times E_j)$.
This proves  the claim \eqref{eq:painc}.

Finally, we remove the assumption \eqref{eq:papos}. Fix $q\in\mathbb R^N\setminus\{{\bf0}\}$.
Choose a sequence $\{\varepsilon_m\}_{m\in\mathbb{N}}\subset(0,\infty)$ such that
$\lim_{m\to\infty}\varepsilon_m=0$ and, for any $m\in\mathbb{N}$ and $i\in\{1,\ldots,s\}$,
$\xi_i+\varepsilon_mq\neq0$. For any $m\in\mathbb{N}$ and $x\in U,$ define
\begin{align*}
w^{(m)}(x):=w(x)+\varepsilon_mq\cdot x.
\end{align*}
Then $w^{(m)}\to w$ in $W^{1,p}(\Omega)$. Moreover, for any given
$m\in\mathbb{N},$ $w^{(m)}$ is continuous finite piecewise affine  and
\begin{align*}
a_*^{(m)}:=\min_{1\leq i\leq s}|\xi_i+\varepsilon_mq|>0.
\end{align*}
Thus, for any given $m\in\mathbb{N},$ there exists a sequence
$\{w_j^{(m)}\}_{j\in\mathbb N}\subset L^p(\Omega)$ such that
$w_j^{(m)}\to w^{(m)}$ in $L^p(\Omega)$ and
\begin{align*}
\limsup_{j\to\infty}G_{\lambda_j,p,\gamma}(w_j^{(m)};\Omega)\leq
C_{N,p,\gamma}^{\mathrm{cell}}\int_\Omega|\nabla w^{(m)}|^p\,dx.
\end{align*}
Then, for any given $m\in\mathbb{N},$ there exists $J_m\in\mathbb{N}$
such that, for any $j\geq J_m$, $\|w_j^{(m)}-w^{(m)}\|_{L^p(\Omega)}\leq\frac1m$ and
\begin{align*}
G_{\lambda_j,p,\gamma}(w_j^{(m)};\Omega)\leq C_{N,p,\gamma}^{\mathrm{cell}}
\int_\Omega\left|\nabla w^{(m)}\right|^p\,dx+\frac1m.
\end{align*}
Without loss of generality, we may assume that $\{J_m\}_{m\in\mathbb{N}}$ is strictly increasing.
For any given $j\geq J_1$, let $m(j)$ be the unique integer such that $J_{m(j)}\leq j<J_{m(j)+1}.$
Since $\{J_m\}_{m\in\mathbb{N}}$ is strictly increasing, it follows that $\lim_{j\to\infty}m(j)=\infty.$
For any $j\geq J_1$, define $w_j:=w_j^{(m(j))}.$ Then, we have
\begin{align*}
\|w_j-w\|_{L^p(\Omega)}&\leq\left\|w_j-w^{(m(j))}\right\|_{L^p(\Omega)}+
\left\|w^{(m(j))}-w\right\|_{L^p(\Omega)}\\
&\leq\frac1{m(j)}+\left\|w^{(m(j))}-w\right\|_{L^p(\Omega)}
\end{align*}
and
\begin{align*}
G_{\lambda_j,p,\gamma}(w_j;\Omega)\leq C_{N,p,\gamma}^{\mathrm{cell}}\int_\Omega
|\nabla w^{(m(j))}|^p\,dx+\frac1{m(j)}.
\end{align*}
Letting $j\to\infty$, we find that $\lim_{j\to\infty}w_j=w$ in $L^p(\Omega)$ and \eqref{eq:parec} holds.
This completes the proof of Proposition \ref{prop:pa}.
\end{proof}

\subsection{Proofs of Theorems \ref{thm:plimsup} and \ref{thm:onelimsup}\label{proofsup}}

In this subsection, we give the proofs of Theorems \ref{thm:plimsup} and \ref{thm:onelimsup}.
The following standard density result follows from \cite[Proposition~2.8]{et99} (see also
\cite[Theorem~1]{vs14}).
\begin{lemma}\label{lem:padensity}
Let $N\in\mathbb N$, $p\in [1,\infty)$, $\Omega\subset\mathbb R^N$ be a bounded open interval
when $N=1$ or a bounded Lipschitz domain when $N\ge2$, and $u\in W^{1,p}(\Omega)$.  There exist a bounded
convex open set $U$ containing $\overline\Omega$ and a sequence
$\{u_m\}_{m\in\mathbb N}\subset C(U)$ of continuous finite
piecewise-affine functions such that $\lim_{m\to\infty}u_m= u$ in $W^{1,p}(\Omega).$
\end{lemma}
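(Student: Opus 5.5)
The statement is a standard density result, and I would prove it by first extending $u$ past $\partial\Omega$ and then applying piecewise-affine interpolation on fine triangulations to mollified functions.

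\textbf{Step 1: extension.} Since $\Omega$ is a bounded interval ($N=1$) or a bounded Lipschitz domain, Stein's (or Calder\'on's) extension theorem gives $Eu\in W^{1,p}(\mathbb R^N)$ with $Eu=u$ on $\Omega$. Fix a large open cube or ball $U\supset\overline\Omega$; it is bounded and convex. Multiplying $Eu$ by a cutoff, we may assume $Eu$ has compact support in $U$.

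\textbf{Step 2: smoothing.} Mollify: $v_\varepsilon:=Eu*\varphi_\varepsilon\in C_{\rm c}^\infty(U)$ for $\varepsilon$ small. Then $v_\varepsilon\to Eu$ in $W^{1,p}(\mathbb R^N)$, and in particular $v_\varepsilon\to u$ in $W^{1,p}(\Omega)$.

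\textbf{Step 3: interpolation.} For fixed smooth $v:=v_\varepsilon$, take a Kuhn/Freudenthal triangulation of $\mathbb R^N$ into simplices of mesh $h$. Each simplex is a closed convex polyhedron with nonempty interior, an intersection of finitely many half-spaces, and the interiors are pairwise disjoint. Only finitely many simplices meet $\overline U$. Let $I_hv$ be the Lagrange $P_1$ interpolant of $v$ at the vertices. It is continuous, and it is affine on each simplex, so it is continuous piecewise affine on $U$ in the sense of Definition~\ref{def:pa}. The Kuhn simplices are uniformly shape-regular, so the standard interpolation estimate gives
\begin{align*}
\|v-I_hv\|_{W^{1,\infty}(U)}\le C_N h\|D^2v\|_{L^\infty}.
\end{align*}
Since $\Omega$ is bounded, this yields convergence in $W^{1,p}(\Omega)$ as $h\to0$.

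\textbf{Step 4: diagonal argument.} Choose $\varepsilon_m\to0$ and then $h_m$ so small that
\begin{align*}
\|v_{\varepsilon_m}-I_{h_m}v_{\varepsilon_m}\|_{W^{1,p}(\Omega)}<\frac1m.
\end{align*}
Setting $u_m:=I_{h_m}v_{\varepsilon_m}$ restricted to $U$ then gives $u_m\to u$ in $W^{1,p}(\Omega)$, and all the $u_m$ share the same convex set $U$.

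\textbf{Main obstacle.} The only genuinely delicate point is Step 1. The extension is where the Lipschitz regularity of $\Omega$ (or the interval structure when $N=1$) is needed, and it lets the approximants live on a convex neighborhood $U$ of $\overline\Omega$ rather than on $\Omega$ itself. The case $p=1$ causes no extra trouble, because both the extension theorem and the interpolation estimate hold for all $p\in[1,\infty)$. This is exactly the content of the cited \cite[Proposition~2.8]{et99}.
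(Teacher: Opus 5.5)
Your argument is correct. It is the standard extension--mollification--$P_1$-interpolation proof behind the result the paper cites (\cite[Proposition~2.8]{et99}); the paper gives no argument of its own beyond that citation. The other reference, \cite[Theorem~1]{vs14}, instead interpolates the Sobolev function directly on suitably translated triangulations without mollifying, whereas your smoothing step reduces everything to the elementary estimate $\|v-I_hv\|_{W^{1,\infty}}\lesssim h\|D^2v\|_{L^\infty}$.
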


Now,  we show Theorem \ref{thm:plimsup} by using Propositions \ref{prop:pa} and \ref{prop:prepair}
and Lemma \ref{lem:padensity}.

\begin{proof}[Proof of Theorem \ref{thm:plimsup}]
Let $U$ and $\{u_m\}_{m\in\mathbb{N}}$ be as in Lemma \ref{lem:padensity}.
By Propositions \ref{prop:pa} and \ref{prop:prepair}, we  conclude that, for any given
$m\in\mathbb{N}$, there exists a sequence $\{u_j^{(m)}\}_{j\in\mathbb{N}}\subset L^p(\Omega)$ such that
$u_j^{(m)}\to u_m$ in $L^p(\Omega)$ and
\begin{align*}
\limsup_{j\to\infty} G_{\lambda_j,p,\gamma}(u_j^{(m)};\Omega)\leq
C_{N,p,\gamma}^{\mathrm{cell}}\int_\Omega|\nabla u_m|^p\,dx.
\end{align*}
Thus, for any given $m\in\mathbb{N},$ there  exists  $J_m\in\mathbb{N}$ such that, for any $j\geq J_m$,
$\|u_j^{(m)}-u_m\|_{L^p(\Omega)}\leq\frac1m$ and
\begin{align*}
G_{\lambda_j,p,\gamma}(u_j^{(m)};\Omega)\leq C_{N,p,\gamma}^{\mathrm{cell}}
\int_\Omega\left|\nabla u_m\right|^p\,dx+\frac1m.
\end{align*}
Without loss of generality, we may assume that $\{J_m\}_{m\in\mathbb{N}}$ is strictly increasing.
For any given  $j\geq J_1$, let $m(j)$ be the unique integer such that $J_{m(j)}\leq j<J_{m(j)+1}.$
Since $\{J_m\}_{m\in\mathbb{N}}$ is strictly increasing, it follows that  $\lim_{j\to\infty}m(j)=\infty.$
For any $j\geq J_1$, define $u_j:=u_j^{(m(j))}.$ Then, we have
\begin{align*}
\|u_j-u\|_{L^p(\Omega)}\leq\left\|u_j-u_{m(j)}\right\|_{L^p(\Omega)}
+\left\|u_{m(j)}-u\right\|_{L^p(\Omega)}\leq\frac1{m(j)}+\left\|u_{m(j)}-u\right\|_{L^p(\Omega)}
\end{align*}
and
\begin{align*}
G_{\lambda_j,p,\gamma}(u_j;\Omega)\leq C_{N,p,\gamma}^{\mathrm{cell}}
\int_\Omega\left|\nabla u_{m(j)}\right|^p\,dx+\frac1{m(j)}.
\end{align*}
Letting $j\to\infty$, we find that $\lim_{j\to\infty}u_j=u$ in $L^p(\Omega)$ and \eqref{eq:jiahua} holds.
This completes the proof of Theorem \ref{thm:plimsup}.
\end{proof}

The following standard strict approximation result is just \cite[Theorem~5.3]{eg15}.
\begin{lemma}\label{lem:strictapp}
Let $N\in\mathbb N$, $\Omega\subset \mathbb{R}^N$ be a open set and $u\in BV(\Omega)$.
Then there  exists a sequence $\{u_m\}_{m\in\mathbb{N}}\subset W^{1,1}(\Omega)\cap C^\infty(\Omega)$ such that
$\lim_{m\to\infty}u_m=u$ in $L^1(\Omega)$ and $\lim_{m\to\infty}\int_{\Omega}|\nabla u_m(x)|\,dx=|Du|(\Omega)$.
\end{lemma}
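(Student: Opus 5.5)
The statement is the classical strict-approximation theorem for $BV$ functions (Evans--Gariepy, Theorem~5.3), and I would prove it by the Anzellotti--Giaquinta construction: a locally finite partition of unity subordinate to an exhaustion of $\Omega$, combined with mollification at scales adapted to each piece.

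Fix $\varepsilon>0$. Let $\Omega_k:=\{x\in\Omega:\operatorname{dist}(x,\partial\Omega)>1/(m+k)\}\cap B(\mathbf 0,k+m)$. Choose $m$ so large that $|Du|(\Omega\setminus\Omega_1)<\varepsilon$.

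Set $V_1:=\Omega_2$ and $V_k:=\Omega_{k+1}\setminus\overline{\Omega_{k-1}}$ for $k\ge2$. These open sets cover $\Omega$, and each point lies in at most three of them. Take a smooth partition of unity $\{\zeta_k\}$ subordinate to $\{V_k\}$, so that $0\le\zeta_k\le1$, $\zeta_k\in C_c^\infty(V_k)$ and $\sum_k\zeta_k\equiv1$ on $\Omega$. For each $k$ choose a mollification radius $\varepsilon_k$ so small that three conditions hold:
\begin{itemize}
\item[(a)] $\operatorname{supp}(\phi_{\varepsilon_k}*(u\zeta_k))\subset V_k$;
\item[(b)] $\|\phi_{\varepsilon_k}*(u\zeta_k)-u\zeta_k\|_{L^1}<\varepsilon2^{-k}$;
\item[(c)] $\|\phi_{\varepsilon_k}*(u\nabla\zeta_k)-u\nabla\zeta_k\|_{L^1}<\varepsilon2^{-k}$.
\end{itemize}
Define $u_\varepsilon:=\sum_k\phi_{\varepsilon_k}*(u\zeta_k)$. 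The sum is locally finite, so $u_\varepsilon\in C^\infty(\Omega)$, and (b) gives $\|u_\varepsilon-u\|_{L^1(\Omega)}<\varepsilon$.

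For the variation, take $\varphi\in C_c^1(\Omega;\mathbb R^N)$ with $|\varphi|\le1$. Moving the mollifier onto $\varphi$ and using $\operatorname{div}(\zeta_k\psi)=\zeta_k\operatorname{div}\psi+\nabla\zeta_k\cdot\psi$, we get
\begin{align*}
\int_\Omega u_\varepsilon\operatorname{div}\varphi\,dx
=\sum_k\int u\operatorname{div}\bigl(\zeta_k(\phi_{\varepsilon_k}*\varphi)\bigr)\,dx
-\sum_k\int\varphi\cdot\bigl[\phi_{\varepsilon_k}*(u\nabla\zeta_k)-u\nabla\zeta_k\bigr]\,dx .
\end{align*}
In this step we used $\sum_k\nabla\zeta_k=0$ to insert the term $u\nabla\zeta_k$ for free. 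The second sum is bounded by $\varepsilon$ because of (c).

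In the first sum, each $\zeta_k(\phi_{\varepsilon_k}*\varphi)$ is an admissible test field with sup-norm at most $1$. The $k=1$ term is therefore bounded by $|Du|(\Omega)$. Each $k\ge2$ term is bounded by $|Du|(V_k)$, and since every point lies in at most three of the $V_k$,
\begin{align*}
\sum_{k\ge2}|Du|(V_k)\le3|Du|(\Omega\setminus\Omega_1)<3\varepsilon .
\end{align*}
Hence $\int_\Omega|\nabla u_\varepsilon|\,dx\le|Du|(\Omega)+4\varepsilon$. In particular $u_\varepsilon\in W^{1,1}(\Omega)$, and $u_\varepsilon\in L^1$ follows from the $L^1$ estimate.

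Lower semicontinuity of the total variation under $L^1$ convergence gives $|Du|(\Omega)\le\liminf\int|\nabla u_\varepsilon|$. Taking $\varepsilon=1/m\to0$ then yields both $u_m\to u$ in $L^1(\Omega)$ and $\int_\Omega|\nabla u_m|\,dx\to|Du|(\Omega)$.

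The main obstacle is the commutator term coming from $\nabla\zeta_k$. Differentiating $u\zeta_k$ produces $u\nabla\zeta_k$, which is not controlled by $|Du|$. The fix is the cancellation $\sum_k\nabla\zeta_k=0$, which lets us replace each of these terms by its mollification error, and then choosing $\varepsilon_k$ so that condition (c) holds makes the total error summable.
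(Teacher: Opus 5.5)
Your proposal is correct and is essentially the paper's route: the paper gives no proof of its own and simply cites Evans--Gariepy, Theorem~5.3, whose proof is exactly the Anzellotti--Giaquinta construction you reproduce (exhaustion, partition of unity, variable mollification radii, and the cancellation $\sum_k\nabla\zeta_k=0$ for the commutator term, followed by lower semicontinuity). The only blemish is cosmetic: you use $m$ both for the exhaustion parameter and for the sequence index in $\varepsilon=1/m$, so rename one of them.
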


\begin{proof}[Proof of Theorem \ref{thm:onelimsup}]
Applying Theorem \ref{thm:plimsup} and Lemma \ref{lem:strictapp}  and using
the same diagonal argument used in the proof of Theorem \ref{thm:plimsup},
we conclude that the desired conclusion  holds; we omit the details here. This
completes the proof of Theorem \ref{thm:onelimsup}.
\end{proof}

\section{The Gamma-Liminf Inequality\label{sec:liminf}}

In this section, we prove the Gamma-liminf inequality stated  in Theorem \ref{thm:main}.
We have the  following  conclusions.

\begin{theorem}\label{thm:pliminf}
Let $N\in\mathbb N$, $p\in(1,\infty)$, $\gamma\in(0,\infty)$, and $\Omega\subset\mathbb R^N$ be a bounded open set.
Let $\{\lambda_j\}_{j\in\mathbb N}\subset(0,\infty)$ satisfy
$\lambda_j\to\infty$ and $\{u_j\}_{j\in\mathbb N}\subset L^p(\Omega)$
satisfy $u_j\to u$ in $L^p(\Omega)$ for some $u\in L^p(\Omega)$. Then
\begin{align}\label{eq:pliminf}
\liminf_{j\to\infty}G_{\lambda_j,p,\gamma}(u_j;\Omega)\geq C_{N,p,\gamma}^{\mathrm{cell}}
\int_\Omega|\nabla u|^p\,dx,
\end{align}
where the right-hand side is understood to be $\infty$ if $u\notin W^{1,p}(\Omega)$.
\end{theorem}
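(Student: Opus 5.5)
We may assume $L:=\liminf_{j\to\infty}G_{\lambda_j,p,\gamma}(u_j;\Omega)<\infty$ and, passing to a subsequence, that the liminf is a limit. Then Lemma \ref{lem:gp} (applied on $\Omega$, using that $L^p$ convergence implies $L^1_{\mathrm{loc}}$ convergence) gives $u\in\dot W^{1,p}(\Omega)$ with $\int_\Omega|\nabla u|^p\,dx<\infty$. Hence $u\in W^{1,p}(\Omega)$, and the right-hand side of \eqref{eq:pliminf} is finite. The proof then reduces to a localization argument.

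We consider the finite Radon measures $\mu_j(A):=\lambda_j\iint_{(A\times\Omega)\cap\{\cdots\}}|x-y|^{\gamma-N}\,dy\,dx$ on $\Omega$. Up to a subsequence, these converge weakly-$*$ to some measure $\mu$ with $\mu(\Omega)\le L$. It then suffices to show that, for a.e.\ $x_0\in\Omega$,
\begin{align*}
\frac{d\mu}{d\mathcal L^N}(x_0)\ge C_{N,p,\gamma}^{\mathrm{cell}}|\nabla u(x_0)|^p .
\end{align*}
Instead of measures, one can equivalently use a Morse/Vitali covering of $\Omega$, up to a null set, by small disjoint cubes $Q=x_0+rR_{x_0}Q_0$ rotated so that $R_{x_0}e_N=\nabla u(x_0)/|\nabla u(x_0)|$. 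On each such cube one lower bounds $\liminf_j G_{\lambda_j,p,\gamma}(u_j;Q)$, and superadditivity over disjoint cubes, $G(\cdot;\Omega)\ge\sum_Q G(\cdot;Q)$, finishes the argument. Points with $\nabla u(x_0)=0$ contribute nothing, so we discard them.

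Fix a point $x_0$ that is a Lebesgue point of $\nabla u$ in the $L^p$ sense (that is, $L^p$-differentiability of $u$, which holds a.e.\ for $W^{1,p}$) and satisfies $a:=|\nabla u(x_0)|>0$. Define the blow-ups $w_r(z):=\big(u(x_0+rR z)-u(x_0)\big)/(ar)$. Then $w_r\to\ell$ in $L^p(Q_0)$ as $r\to0$, since $u$ is $L^p$-differentiable at $x_0$ (Calderón--Zygmund). By Lemma \ref{lem:scaling},
\begin{align*}
G_{\lambda_j,p,\gamma}(u_j;Q)=a^pr^NG_{r^\gamma\lambda_j/a^p,p,\gamma}(w_{j,r};Q_0),
\end{align*}
where $w_{j,r}$ is the rescaling of $u_j$, and $w_{j,r}\to w_r$ in $L^p(Q_0)$ as $j\to\infty$. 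The definition \eqref{eq:mp} therefore yields
\begin{align*}
\liminf_jG_{\lambda_j,p,\gamma}(u_j;Q)\ge a^pr^Nm_{p,\gamma}(w_r).
\end{align*}
By Lemma \ref{lem:mplsc}, $\liminf_{r\to0}m_{p,\gamma}(w_r)\ge m_{p,\gamma}(\ell)=C_{N,p,\gamma}^{\mathrm{cell}}$. So for every $\varepsilon>0$ and every sufficiently small $r$ (depending on $x_0$),
\begin{align*}
\liminf_jG_{\lambda_j,p,\gamma}(u_j;Q)\ge(C^{\mathrm{cell}}_{N,p,\gamma}-\varepsilon)|\nabla u(x_0)|^p|Q|,
\end{align*}
and likewise $|\nabla u(x_0)|^p|Q|\ge\int_Q|\nabla u|^p\,dx-\varepsilon|Q|$ by the Lebesgue point property.

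The main obstacle is the covering step. The cubes are rotated and centered at their good points, and the admissible radii depend on $x_0$, so a fine (Morse-type) covering theorem is needed. Such coverings exist for families of sets comparable to balls with centers at the points, and the Vitali--Morse covering lemma gives a countable disjoint subfamily covering a.e.\ point of the good set $\{\nabla u\neq0\}$. We take finitely many cubes capturing all but $\varepsilon$ of $\int_\Omega|\nabla u|^p\,dx$. Since $G_{\lambda_j,p,\gamma}(u_j;\Omega)\ge\sum_kG_{\lambda_j,p,\gamma}(u_j;Q_k)$ for disjoint $Q_k\subset\Omega$ (by restricting the double integral to $\bigcup_k Q_k\times Q_k$), and since the liminf of a finite sum is at least the sum of the liminfs, we obtain
\begin{align*}
L\ge(C^{\mathrm{cell}}_{N,p,\gamma}-\varepsilon)\Big(\int_\Omega|\nabla u|^p\,dx-C\varepsilon\Big).
\end{align*}
Letting $\varepsilon\to0$ gives \eqref{eq:pliminf}.
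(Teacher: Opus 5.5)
Your proposal is correct and follows essentially the same route as the paper: you reduce to $u\in W^{1,p}(\Omega)$ via Lemma \ref{lem:gp}, blow up at $L^p$-Lebesgue points with $\nabla u\neq0$ in cubes rotated so that $Re_N=\nabla u(x_0)/|\nabla u(x_0)|$, use Lemma \ref{lem:scaling} and the definition of $m_{p,\gamma}$ to get $\liminf_j G_{\lambda_j,p,\gamma}(u_j;Q)\ge |\nabla u(x_0)|^p r^N m_{p,\gamma}(w_r)$, apply Lemma \ref{lem:mplsc} with $w_r\to\ell$, and conclude with the Morse covering Lemma \ref{lem:morse} and superadditivity over disjoint cubes. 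The differences are cosmetic: the paper subtracts the cube average and uses the Poincar\'e inequality, which needs only the $L^p$-Lebesgue point property of $\nabla u$, where you subtract $u(x_0)$ and cite $L^p$-differentiability; the paper uses a multiplicative rather than additive error when comparing $|\nabla u(x_0)|^p r^N$ with $\int_Q|\nabla u|^p\,dx$; and your weak-$*$ measure formulation is not needed.
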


\begin{theorem}\label{thm:oneliminf}
Let $N\in\mathbb N$, $\gamma\in(0,\infty)$, and $\Omega\subset \mathbb{R}^N$ be a bounded open set.
Let $\{\lambda_j\}_{j\in\mathbb N}\subset(0,\infty)$ satisfy
$\lambda_j\to\infty$ and $\{u_j\}_{j\in\mathbb N}\subset L^1(\Omega)$ satisfy
$u_j\to u$ in $L^1(\Omega)$ for some $u\in L^1(\Omega)$.  Then
\begin{align}\label{eq:oneliminf}
\liminf_{j\to\infty}G_{\lambda_j,1,\gamma}(u_j;\Omega)\geq C_{N,1,\gamma}^{\mathrm{cell}}|Du|(\Omega),
\end{align}
where the right-hand side is $\infty$ when $u\notin BV(\Omega)$.
\end{theorem}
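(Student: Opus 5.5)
The statement immediately above is Theorem~\ref{thm:oneliminf} (the case $p=1$). I would prove it by blow-up at $|Du|$-a.e.\ point, comparing with the cell functional $m_{1,\gamma}$.

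\emph{Step 1: reductions.} Assume $L:=\liminf_j G_{\lambda_j,1,\gamma}(u_j;\Omega)<\infty$ and pass to a subsequence realizing it. Lemma~\ref{lem:gp} then gives $u\in BV(\Omega)$. Consider the measures $\mu_j(A):=G_{\lambda_j,1,\gamma}(u_j;A)$ for open $A\subset\Omega$, or more precisely the measure with density $x\mapsto\lambda_j\int_\Omega \mathbf 1_{\{\cdots\}}|x-y|^{\gamma-N}\,dy$. These have bounded mass, so up to a subsequence $\mu_j\rightharpoonup\mu$ weakly-$*$. Because $G(u_j;A)$ only counts pairs in $A\times A$ and is superadditive on disjoint open sets, it suffices to prove
\begin{align*}
\frac{d\mu}{d|Du|}(x_0)\geq C_{N,1,\gamma}^{\mathrm{cell}}\quad\text{for }|Du|\text{-a.e. }x_0\in\Omega.
\end{align*}
The alternative route is a Morse/Besicovitch covering by small disjoint cubes $Q$ on which $\liminf_j G(u_j;Q)\ge (C^{\mathrm{cell}}-\varepsilon)|Du|(Q)$.

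\emph{Step 2: blow-up.} Fix $x_0$ such that the polar vector $\nu:=\frac{dDu}{d|Du|}(x_0)$ exists, $x_0$ is a Lebesgue point of $\nu$ with respect to $|Du|$, and the density bounds hold along cubes $Q_r:=x_0+rR Q_0$, where $R\in\mathcal O(N)$ satisfies $Re_N=\nu$ and $|Du|(\partial Q_r)=0$. Set
\begin{align*}
w_r(z):=\frac{u(x_0+rRz)-\fint_{Q_r}u\,dx}{|Du|(Q_r)/r^{N-1}}.
\end{align*}
By Lemma~\ref{lem:scaling} with $a=|Du|(Q_r)/r^N$,
\begin{align*}
G_{\lambda,1,\gamma}(u_j;Q_r)=|Du|(Q_r)\,G_{\lambda',1,\gamma}(\tilde w;Q_0),\qquad \lambda'=\frac{r^\gamma\lambda}{a}\to\infty.
\end{align*}
Choosing $j=j(r)$ large (diagonally), so that the rescaled $u_j$ are $L^1$-close to $w_r$, gives
\begin{align*}
\frac{\mu(\overline{Q_r})}{|Du|(Q_r)}\ \ge\ m_{1,\gamma}(w_r)-o(1).
\end{align*}
By the fine structure of $BV$ (De Giorgi-type rectification together with the Lebesgue point property of $\nu$), $|Dw_r|(Q_0)=1$, and $Dw_r$ tends to become a nonnegative multiple of $e_N$ in measure. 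Along a subsequence $w_r\to w$ in $L^1(Q_0)$, where $w(z)=g(z_N)$ with $g$ nondecreasing; in the jump part $g$ is a step function and in the Cantor part a general one. Lower semicontinuity of total variation gives $|Dw|(Q_0)\le 1$. Sharpening this to equality needs a choice of $r$ with $|Du|(\partial Q_r)=0$ and the density bound; this is standard. Lemma~\ref{lem:mplsc} then yields $\liminf_r m_{1,\gamma}(w_r)\ge m_{1,\gamma}(w)$.

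\emph{Step 3: the main obstacle.} The key step is the one-dimensional monotone lower bound: for every nondecreasing $g\in BV(I_0)$,
\begin{align*}
m_{1,\gamma}(g(z_N))\ \ge\ C_{N,1,\gamma}^{\mathrm{cell}}\,|Dg|(I_0).
\end{align*}
For affine $g$ this is the definition, via \eqref{eq:mpscale}. My first attempt is a layering argument. Approximate $g$ in the area-strict sense by nondecreasing piecewise affine $g_k$. For a piecewise affine $g_k$ with slopes $s_i$ on intervals $J_i$, split $Q_0$ into slabs $I_0^{N-1}\times J_i$. Further subdivide each slab into small cubes of side comparable to $|J_i|$, which cover it up to a small boundary fraction. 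On each cube, use superadditivity of $G$ and the definition of $m_{1,\gamma}$ together with Lemma~\ref{lem:scaling} to get at least $C^{\mathrm{cell}} s_i\cdot(\text{volume})$. Summing gives $C^{\mathrm{cell}}|Dg_k|(I_0)$ up to a boundary error.

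The difficulty is that $m_{1,\gamma}$ is only lower semicontinuous, and lower semicontinuity points the wrong way for passing from $g_k$ to $g$. I would therefore instead work with the original sequence $u_j$ directly. Use monotonicity in the form of a \emph{coarea/slicing in the range}: a jump of height $h$ of $g$ should be compared with a steep affine function over a thin slab. Concretely, any $u_j$ close to $g(z_N)$ is also close, on a thin slab around the jump, to an affine function of slope $h/\delta$. This is where the nonlocal scaling $\lambda'\to\infty$ absorbs the steepness, so the cell bound applies slab by slab. Taking the thickness $\delta\to0$ slowly relative to $\lambda_j$, and summing over a fine partition adapted to $Dg$, should give the bound.

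\emph{Step 4: conclusion.} With the one-dimensional bound, $d\mu/d|Du|\ge C^{\mathrm{cell}}_{N,1,\gamma}$ holds $|Du|$-a.e. Hence
\begin{align*}
L\ \ge\ \mu(\Omega)\ \ge\ C^{\mathrm{cell}}_{N,1,\gamma}|Du|(\Omega),
\end{align*}
which is \eqref{eq:oneliminf}.
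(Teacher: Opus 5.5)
Steps 1, 2 and 4 follow the paper's architecture, and you correctly isolate the key estimate $m_{1,\gamma}(W_g)\ge C^{\mathrm{cell}}_{N,1,\gamma}|Dg|(I_0)$ for nondecreasing $g$. The shared ingredients are the blow-up via Lemma~\ref{lem:scaling}, the bound $\liminf_j G_{\lambda_j,1,\gamma}(u_j;Q_r)\ge|Du|(Q_r)\,m_{1,\gamma}(w_r)$, monotone one-dimensional blow-up limits with $|Dg|(I_0)=1$ as in Lemma~\ref{lem:tangent}, Lemma~\ref{lem:mplsc}, and a covering or differentiation argument. The bound on $Q_r$ holds for each fixed $r$ directly from the definition of $m_{1,\gamma}$, with no diagonal choice $j(r)$. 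At Cantor points the blow-up statement is imported via Lemma~\ref{lem:cfi}, so it is not entirely routine.

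The gap is Step~3. As written it does not prove the key estimate, so your argument only covers points where the blow-up limit is $\ell$, i.e.\ the absolutely continuous part of $Du$.

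Your first attempt fails for the reason you state. The second rests on a false claim. Near a jump of height $h$, the step $g(z_N)$ and the ramp of slope $h/\delta$ differ on the slab of width $\delta$ by $h/4$ on average. After the isotropic normalization of Lemma~\ref{lem:scaling} (space scaled by $\delta$, values by $h$), this becomes $W_{q^{\rm j}}$ versus $\ell$, at $L^1(Q_0)$-distance $1/4$ for every $\delta$. So no cube, at any scale, sees functions converging to $\ell$, and the definition of $C^{\mathrm{cell}}_{N,1,\gamma}$ gives nothing there. Tuning $\delta$ against $\lambda_j$ cannot fix this, and the argument does not address Cantor profiles at all.

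The obstruction is real. For $u=\mathbf 1_{\{x_N>0\}}$ the pointwise energy is $K_{N,1}/(\gamma+1)$ per unit jump area, strictly below the affine value $K_{N,1}/\gamma$ of Lemma~\ref{lem:cellbds}(i), so jumps are not just steep ramps for this functional. Moreover, $\mathbf 1_{\{|u(x)-u(y)|\ge\lambda|x-y|^{\gamma+1}\}}$ is not linear in $|u(x)-u(y)|$, so there is no coarea formula for a ``slicing in the range'' to exploit.

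The missing idea is to use Lemma~\ref{lem:mplsc} in the favorable direction, by approximating $\ell$ with self-similar staircases built from $g$ itself.
\begin{enumerate}
\item Normalize $g$ to $\widetilde q$ with $\widetilde q(-\frac12+)=0$ and $\widetilde q(\frac12-)=1$ (via \eqref{eq:mpscale}).
\item Set $q_m(t):=a_k+\frac1m\widetilde q(m(t-a_k)-\frac12)$ on $(a_k,a_{k+1})$. Then $\sup_{I_0}|q_m(t)-t|\le\frac1m$, so $C^{\mathrm{cell}}_{N,1,\gamma}=m_{1,\gamma}(\ell)\le\liminf_m m_{1,\gamma}(W_{q_m})$.
\item It then suffices to show $m_{1,\gamma}(W_{q_m})\le m_{1,\gamma}(W_{\widetilde q})$. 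Take a near-optimal sequence for $W_{\widetilde q}$ and truncate it to $[0,1]$.
\item Modify it to coincide with $W_{\widetilde q}$ near $\partial Q_0$ at asymptotically small cost, using Lemma~\ref{lem:replace} together with Lemma~\ref{lem:collar}.
\item Place the $m^N$ copies $a_{\alpha_N}+\frac1m\widehat v_j(m(x-c_\alpha))$ in the subcubes with parameter $m^\gamma\widetilde\lambda_j$. By Lemma~\ref{lem:scaling} this leaves the total in-cube energy unchanged.
\item Interactions between distinct subcubes then see only $W_{q_m}$ near the grid. They vanish as $\lambda\to\infty$ because $|DW^{\rm ext}_m|$ does not charge the skeleton (Lemma~\ref{lem:skeleton}, via Lemma~\ref{lem:localbv}).
\end{enumerate}
This tiling/relaxation argument never compares $u_j$ with an affine function near a jump, and it is what Step~3 needs.
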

The proofs of Theorems \ref{thm:pliminf} and \ref{thm:oneliminf} are respectively given
in Subsections \ref{proofp} and \ref{proofone}.

\subsection{Proof of Theorem \ref{thm:pliminf}\label{proofp}}
In this subsection, we show Theorem \ref{thm:pliminf}.
Throughout this subsection, we use the following notation. For any given $R\in\mathcal O(N)$,
$x\in\mathbb R^N$, and $r\in (0,\infty)$, define
\begin{align*}
 Q_r^R(x):=x+rRQ_0.
\end{align*}
The following Morse type covering lemma is  a special case of
 \cite[Theorem 5.51]{afp00}.
\begin{lemma}\label{lem:morse}
Let $\mu$ be a finite positive Radon measure on a bounded open set
$U\subset\mathbb R^N$ and $A_0\subset U$ be a $\mu$-measurable set.  Let
$V$ be a family of open cubes of the form $Q_r^R(x)$, where $x\in A_0$, $r\in(0,\infty)$,
and $R\in\mathcal O(N)$. Assume that, for any cube $Q\in V$,
$\overline{Q}\subset U$ and $\mu(\partial Q)=0.$ Assume further that, for any given  $x\in A_0$
and any given  $\delta\in(0,\infty)$, the family $V$ contains a cube centered at $x$ and having
diameter smaller than $\delta$.  Then there exists a countable subfamily
$(Q_i)_{i\in\mathcal I}\subset V$ of pairwise disjoint cubes such that
\begin{align*}
 \mu\left(A_0\setminus\left(\bigcup_{i\in\mathcal I}Q_i\right)\right)=0.
\end{align*}
\end{lemma}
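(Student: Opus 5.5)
The plan is to derive the lemma from the Besicovitch--Morse covering theorem by the usual exhaustion argument. The one geometric input is that rotated cubes are Morse sets with a uniform constant.

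\emph{Step 1 (geometry).} For $x\in\mathbb R^N$, $r\in(0,\infty)$ and $R\in\mathcal O(N)$, the cube $Q_r^R(x)=x+rRQ_0$ is convex. It satisfies
\begin{align*}
B(x,r/2)\subset Q_r^R(x)\subset \overline{B(x,\sqrt N\,r/2)}.
\end{align*}
Being convex, it is star-shaped with respect to every point of the inner ball. So every cube in $V$ is a Morse set with the same constant $M:=\sqrt N$, independent of $R$. The Morse covering theorem therefore applies. It gives a number $\xi=\xi(N)$ with the following property: from any family of such cubes centred at the points of a bounded set $A$, with bounded diameters, one can extract $\xi$ subfamilies. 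Each subfamily consists of pairwise disjoint cubes, and together they still cover $A$.

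\emph{Step 2 (one exhaustion step).} We may assume $\mu(A_0)>0$. Keep only the cubes of $V$ with diameter smaller than $1$. The fine-covering hypothesis guarantees that every $x\in A_0$ is still the centre of some kept cube, so Step 1 applies. One of the $\xi$ disjoint subfamilies must cover a set of $\mu$-measure at least $\mu(A_0)/\xi$. Since that subfamily is countable and $\mu$ is finite, a finite subcollection $Q_1,\dots,Q_{k_1}$ already satisfies
\begin{align*}
\mu\Big(A_0\cap \bigcup_{i\le k_1}Q_i\Big)\ge \frac{\mu(A_0)}{2\xi}.
\end{align*}
Set $A_1:=A_0\setminus\bigcup_{i\le k_1}\overline{Q_i}$. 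Because $\mu(\partial Q_i)=0$, we get $\mu(A_1)\le(1-\frac1{2\xi})\mu(A_0)$.

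\emph{Step 3 (iteration).} Let $U_1:=U\setminus\bigcup_{i\le k_1}\overline{Q_i}$. It is open, being the complement of a finite union of closed sets, and $A_1\subset U_1$. Define $V_1$ as the set of cubes in $V$ that are centred in $A_1$ and have closure contained in $U_1$. For each $x\in A_1$ we have $\operatorname{dist}(x,\partial U_1)>0$. Hence $V_1$ still contains cubes centred at $x$ of arbitrarily small diameter, and all hypotheses hold again with $(U_1,A_1,V_1)$. Every cube of $V_1$ is automatically disjoint from $Q_1,\dots,Q_{k_1}$.

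Repeating the step produces finite disjoint batches. After $n$ steps the uncovered part has measure at most $(1-\frac1{2\xi})^n\mu(A_0)\to0$. Finally, $\mu$ of the union of all boundaries is zero, so the part of $A_0$ lying on closures but outside the open cubes is $\mu$-null, and the full countable disjoint family covers $\mu$-almost all of $A_0$.

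The main obstacle is Step 1. The cubes may be arbitrarily rotated, so the classical Besicovitch theorem for balls, or for axis-parallel cubes, does not apply directly. The remedy is the Morse-set version, whose overlap constant depends only on $N$ and on $M$; I would cite it in the form given in \cite[Theorem 5.51]{afp00}. The remaining bookkeeping, namely the finite extraction, the null boundaries and the openness of $U_n$, is routine.
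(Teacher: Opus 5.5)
Your argument is correct and rests on the same tool as the paper, which offers no proof beyond observing that the lemma is a special case of Morse's covering theorem \cite[Theorem~5.51]{afp00}. Your Steps~2--3 unpack the standard Besicovitch-to-Vitali exhaustion packaged in that theorem; if the cited result is already stated in Vitali form, as the paper's usage suggests, these steps are redundant but harmless. Also, $\mu(\partial Q)=0$ is needed only in your final step, not in the bound for $\mu(A_1)$, since removing closures only shrinks the set.
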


We prove Theorem \ref{thm:pliminf} by using Lemma \ref{lem:morse}.

\begin{proof}[Proof of Theorem \ref{thm:pliminf}]
If $u\notin W^{1,p}(\Omega)$, then \eqref{eq:pliminf} follows directly from Lemma \ref{lem:gp}.
Assume  that $u\in W^{1,p}(\Omega)$ and define the measure $d\mu(x):=|\nabla u(x)|^p\,dx$
on $\Omega.$ Without loss of generality, we may assume that $\mu(\Omega)>0.$
Let
\begin{align*}
A_0:=\left\{x\in\Omega:\nabla u(x)\neq0,\ \lim_{\rho\to0}
\fint_{B(x,\rho)}|\nabla u(y)-\nabla u(x)|^p\,dy=0\right\}.
\end{align*}
By the Lebesgue differentiation theorem, we find that  $\mu(\Omega\setminus A_0)=0$.
Let $x\in A_0$ and $\xi:=\nabla u(x).$ Choose $R_x\in\mathcal O(N)$ such that $ R_xe_N=\frac{\xi}{|\xi|}$ and
choose $ r_x\in(0,\infty)$ such that $Q_r^{R_x}(x)\subset\Omega$ for any $r\in(0, r_x)$.
Let $r\in (0,r_x)$. For any $j\in\mathbb{N}$ and $z\in Q_0$,  define $b_{x,r}:=\fint_{Q_r^{R_x}(x)}u(y)\,dy$,
\begin{align*}
u_{x,r}(z):=\frac{u(x+rR_xz)-b_{x,r}}{|\xi|r},\ \ \text{and}\ \ u_{j,x,r}(z):
=\frac{u_j(x+rR_xz)-b_{x,r}}{|\xi|r}.
\end{align*}
Since  $u_j\to u$ in $L^p(\Omega)$, it follows that $u_{j,x,r}\to u_{x,r}$ in $L^p(Q_0).$
Using this, the definition of $m_{p,\gamma}$, and  Lemma \ref{lem:scaling}, we conclude that
\begin{align}\label{eq:yongyou}
\liminf_{j\to\infty}G_{\lambda_j,p,\gamma}\left(u_j;Q_r^{R_x}(x)\right)&=|\xi|^pr^N
\liminf_{j\to\infty}G_{\frac{\lambda_jr^\gamma}{|\xi|^p},p,\gamma}\left(u_{j,x,r};Q_0\right)\notag\\
&\geq|\xi|^pr^Nm_{p,\gamma}(u_{x,r}).
\end{align}
Fix $\varepsilon\in(0,\min\{1,C_{N,p,\gamma}^{\mathrm{cell}}/2\})$.
By the Lebesgue differentiation theorem again, we conclude that
\begin{align*}
\lim_{r\to 0}\frac{\mu(Q_r^{R_x}(x))}{r^N}=|\nabla u(x)|^p=|\xi|^p.
\end{align*}
Thus, after decreasing $r_x$ if necessary, we have
\begin{align*}
\mu(Q_r^{R_x}(x))\leq(1+\varepsilon)|\xi|^p r^N.
\end{align*}
For any $y\in\Omega$, let $f(y):=u(y)-\xi\cdot(y-x).$ From the definition of $b_{x,r}$, we deduce that
\begin{align*}
\fint_{Q_r^{R_x}(x)}f(y)\,dy=\fint_{Q_r^{R_x}(x)}u(y)\,dy
-\xi\cdot\fint_{Q_r^{R_x}(x)}(y-x)\,dy=b_{x,r}.
\end{align*}
Using this and applying the  Poincar\'e inequality to $f$ on $Q_r^{R_x}(x)$, we find that
\begin{align}\label{eq:lushang}
\int_{Q_r^{R_x}(x)}\left|f(y)-b_{x,r}\right|^p\,dy\lesssim r^p\int_{Q_r^{R_x}(x)}
|\nabla u(y)-\xi|^p\,dy.
\end{align}
Since $R_xe_N=\frac{\xi}{|\xi|},$ it follows that $R_x^T\xi=|\xi|e_N.$
Consequently, for any $z\in Q_0$,
\begin{align*}
\xi\cdot(rR_xz)=r(R_x^T\xi)\cdot z=|\xi|r\,e_N\cdot z=|\xi|r z_N=|\xi|r\ell(z)
\end{align*}
and hence
\begin{align*}
u_{x,r}(z)-\ell(z)=\frac{u(x+rR_xz)-b_{x,r}}{|\xi|r}-\ell(z)
=\frac{u(x+rR_xz)-b_{x,r}-\xi\cdot(rR_xz)}{|\xi|r}.
\end{align*}
From this, the change of variables $y=x+rR_xz,$ and \eqref{eq:lushang}, we infer that
\begin{align*}
\|u_{x,r}-\ell\|_{L^p(Q_0)}^p&=\frac{1}{|\xi|^pr^p}\int_{Q_0}\left|u(x+rR_xz)-b_{x,r}
-\xi\cdot(rR_xz)\right|^p\,dz\\
&=\frac{1}{|\xi|^pr^{N+p}}\int_{Q_r^{R_x}(x)}\left|u(y)-b_{x,r}
-\xi\cdot(y-x)\right|^p\,dy\\
&\lesssim\frac{1}{|\xi|^pr^N}\int_{Q_r^{R_x}(x)}|\nabla u(y)-\xi|^p\,dy.
\end{align*}
Since $x$ is a $L^p$-Lebesgue  point of $\nabla u$, it follows that
\begin{align*}
\lim_{r\to 0}u_{x,r}=\ell\ \ \text{in}\ \ L^p(Q_0).
\end{align*}
Using  this and Lemma \ref{lem:mplsc}, we conclude that
\begin{align*}
C_{N,p,\gamma}^{\mathrm{cell}}=m_{p,\gamma}(\ell)\leq\liminf_{r\to 0}m_{p,\gamma}(u_{x,r}).
\end{align*}
By this, after decreasing $r_x$ if necessary, we find that
\begin{align}\label{eq:mplocal}
C_{N,p,\gamma}^{\mathrm{cell}}-\varepsilon\leq m_{p,\gamma}(u_{x,r}).
\end{align}
From this,\eqref{eq:mplocal}, and \eqref{eq:yongyou}, we deduce that
\begin{align}
\liminf_{j\to\infty}G_{\lambda_j,p,\gamma}\left(u_j;Q_r^{R_x}(x)\right)\geq|\xi|^pr^Nm_{p,\gamma}(u_{x,r})
\geq\frac{C_{N,p,\gamma}^{\mathrm{cell}}-\varepsilon}{1+\varepsilon}
\mu\left(Q_r^{R_x}(x)\right).\label{eq:plocal}
\end{align}

Now, define
\begin{align*}
V_\varepsilon:=\left\{Q_r^{R_x}(x):x\in A_0,\ r\in(0,r_x)\right\}.
\end{align*}
Applying Lemma \ref{lem:morse} to $(\mu,\Omega, V_\varepsilon),$ we conclude that there  exists
a  countable subfamily $\{Q_i\}_{i\in\mathcal{I}}\subset V_\varepsilon$ of pairwise disjoint cubes such that
\begin{align*}
\mu\left(A_0\setminus\left(\bigcup_{i\in\mathcal I}Q_i\right)\right)=0.
\end{align*}
This, combined with the fact that $\mu(\Omega\setminus A_0)=0$, implies
\begin{align*}
\mu\left(\Omega\setminus\left(\bigcup_{i\in\mathcal I}Q_i\right)\right)=0.
\end{align*}
Since $\mu(\Omega)<\infty$, we can choose a finite subfamily of $\{Q_i\}_{i\in\mathcal{I}}$,
relabeled as $Q_1,\ldots,Q_M$, such that
\begin{align*}
\mu\left(\Omega\setminus\left(\bigcup_{i=1}^M Q_i\right)\right)<\varepsilon.
\end{align*}
Using this and  \eqref{eq:plocal}, we obtain that
\begin{align*}
\liminf_{j\to\infty}G_{\lambda_j,p,\gamma}\left(u_j;\Omega\right)
&\geq\liminf_{j\to\infty}\sum_{i=1}^M G_{\lambda_j,p,\gamma}\left(u_j;Q_i\right)\\
&\geq\frac{C_{N,p,\gamma}^{\mathrm{cell}}-\varepsilon}{1+\varepsilon}
\sum_{i=1}^M\mu(Q_i)>
\frac{C_{N,p,\gamma}^{\mathrm{cell}}-\varepsilon}{1+\varepsilon}(\mu(\Omega)-\varepsilon).
\end{align*}
Letting $\varepsilon\to0$, we find that \eqref{eq:pliminf} holds.
This completes the proof of Theorem \ref{thm:pliminf}.
\end{proof}

\subsection{One-Dimensional Limits of Rescaled BV Functions\label{subsec:bvlim}}

In this subsection, we study one-dimensional limits of rescaled BV functions.
Let $N\in\mathbb N$, $\Omega\subset\mathbb R^N$ be an open set,
$\{u_k\}_{k\in\mathbb N}\subset BV(\Omega)$, and $u\in BV(\Omega)$.
Recall that $u_k$ \emph{converges strictly }to $u$ in
$BV(\Omega)$ if $\lim_{k\to\infty}u_k=u$ in $L^1(\Omega)$ and  $\lim_{k\to\infty}|Du_k|(\Omega)=|Du|(\Omega)$,
and for any given $R\in\mathcal O(N)$, $x\in\mathbb R^N$, and $r\in (0,\infty)$,
$Q_r^R(x):=x+rRQ_0.$ Let $x\in\Omega$, $R\in\mathcal O(N)$, and $r\in(0,\infty)$ such that
$Q_r^R(x)\subset\Omega$ and $|Du|(Q_r^R(x))\in (0,\infty)$. For any $z\in Q_0,$ let
\begin{align}\label{eq:rescale}
u_{x,r}^R(z):=\frac{r^{N-1}}{|Du|(Q_r^R(x))}\left[u(x+rRz)-\fint_{Q_r^R(x)}u\,dy\right].
\end{align}
By a change of variables, we find that
\begin{align*}
\left|Du_{x,r}^R\right|(Q_0)=\frac{r^{N-1}}{|Du|(Q_r^R(x))}\frac{|Du|(Q_r^R(x))}{r^{N-1}}=1.
\end{align*}
Recall that $I_0:=(-\frac{1}{2},\frac{1}{2})$ and, for any $q\in BV(I_0)$ and $z\in Q_0$, $W_{q}(z):=q(z_N)$.
The following lemma is the main  result in this subsection.
\begin{lemma}\label{lem:tangent}
Let $N\in\mathbb N$, $\Omega\subset\mathbb R^N$ be a bounded open set, and
$u\in BV(\Omega)$. Then, for $|Du|$-a.e. $x\in\Omega$, there exist
an orthogonal matrix $R_x\in\mathcal O(N)$, a sequence
$\{r_k\}_{k\in\mathbb N}\subset(0,\infty)$ with $r_k\to 0$, and a
nondecreasing function $q_x\in BV(I_0)$ such that the following
statements hold.
\begin{enumerate}[{\rm(i)}]
\item For any $k\in\mathbb{N},$ $\overline{Q_{r_k}^{R_x}(x)}\subset\Omega$,
$|Du|(Q^{R_x}_{r_k}(x))\in (0,\infty)$, and
\begin{align*}
|Du|\left(\partial Q_{r_k}^{R_x}(x)\right)=0.
\end{align*}

\item  $|DW_{q_x}|(Q_0)=|Dq_x|(I_0)=1$.

\item
$\lim_{k\to\infty}u_{x,r_k}^{R_x}=W_{q_x}$ in $L^1(Q_0)$. Consequently, $u_{x,r_k}^{R_x}$
converges strictly to $W_{q_x}$ in $BV(Q_0)$ as $k\to\infty$.
\end{enumerate}
\end{lemma}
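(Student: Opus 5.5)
The plan is to use the polar decomposition $Du=\nu_u|Du|$ with $|\nu_u|=1$ $|Du|$-a.e., together with the classical blow-up theorem for BV functions (De Giorgi's structure theorem, in the form of \cite[Theorem~3.95]{afp00}). Fix $x$ in the set of $|Du|$-full measure where three things hold. First, $x$ is a Lebesgue point of $\nu_u$ with respect to $|Du|$, that is,
\begin{align*}
\lim_{r\to0}\fint_{B(x,r)}|\nu_u-\nu_u(x)|\,d|Du|=0.
\end{align*}
Second, $|Du|(B(x,r))>0$ for all $r>0$. Third, $x$ is a point where the rescaled measures have tangent measures. We choose $R_x\in\mathcal O(N)$ with $R_xe_N=\nu_u(x)$, and we take the cubes $Q_r^{R_x}(x)$ with $\overline{Q_r^{R_x}(x)}\subset\Omega$.

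Statement (i) is then routine. For each fixed orientation, the boundaries $\partial Q_r^{R_x}(x)$, $r>0$, are pairwise disjoint. Since $|Du|$ is finite, only countably many of them can carry positive mass, so we may restrict to radii $r$ outside a countable set. Positivity holds because $Q_r^{R_x}(x)\supset B(x,r/2)$.

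For (ii) and (iii), we consider $v_r:=u^{R_x}_{x,r}$. It has zero mean on $Q_0$ and satisfies $|Dv_r|(Q_0)=1$. By the Poincaré inequality on $Q_0$ and compactness in $BV$, there exist $r_k\to0$, taken from the admissible radii, with $v_{r_k}\to v$ in $L^1(Q_0)$ and $|Dv|(Q_0)\le1$. We then identify $v$. By the Lebesgue point property, after the change of variables,
\begin{align*}
\int_{Q_0}|\nu_{v_r}-e_N|\,d|Dv_r|=\frac{1}{|Du|(Q_r^{R_x}(x))}\int_{Q_r^{R_x}(x)}|\nu_u-\nu_u(x)|\,d|Du|\to0.
\end{align*}
Note that the cube has comparable balls inside and outside it; we use a doubling-free formulation via the cubes themselves, or we run the Lebesgue point argument with the Besicovitch differentiation theorem along cubes. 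Consequently $D v_r- e_N|Dv_r|\to0$ in total variation. Hence $D_iv=0$ for $i<N$ in $\mathcal D'(Q_0)$, and $D_Nv\ge0$ as a measure. It follows that $v(z)=q(z_N)$ for some nondecreasing $q\in BV(I_0)$.

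The main obstacle is showing $|Dv|(Q_0)=1$, that is, that no mass is lost at $\partial Q_0$; strict convergence then follows. We write
\begin{align*}
D_Nv_{r_k}(Q_0)=\int_{Q_0}\nu_{v_{r_k}}\cdot e_N\,d|Dv_{r_k}|\to1.
\end{align*}
However, weak-$*$ convergence of $D_Nv_{r_k}$ only gives the bound for open subsets. To prevent concentration at the boundary, we use the tangent measure property at $x$. We choose $x$ so that the blow-ups of $|Du|$ along the chosen cubes converge weakly-$*$ to a tangent measure $\sigma$, normalized so that $\sigma(\overline{Q_0})$ carries the mass of $Q_0$. By a further subsequence and the choice of radii with $\sigma(\partial(tQ_0))=0$, we arrange $|Dv_{r_k}|\rightharpoonup\sigma$ on $Q_0$ with $\sigma(\partial Q_0)=0$. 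Concretely, we pick $t\in(0,1)$ close to $1$ and replace $r_k$ by radii $r_k/t$. The limit measures then obey a doubling-type estimate, which is available $|Du|$-a.e. by the standard fact that $|Du|$ has tangent measures at a.e. point. If this step proves troublesome, the fallback is to rely directly on \cite[Theorem~3.95]{afp00}: at $|Du|$-a.e. point, every blow-up limit has the form $q(\langle\nu,\cdot\rangle)$ with the rescaled total variations converging weakly-$*$, and boundary mass is controlled along a.e. scale.

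Once $|Dv|(Q_0)=\liminf_k D_Nv_{r_k}(Q_0)=1$ is known, the limit $v=W_{q_x}$ satisfies $|DW_{q_x}|(Q_0)=|Dq_x|(I_0)$ by Fubini, since $Q_0$ has unit cross-sections. This proves (ii). Moreover $|Dv_{r_k}|(Q_0)=1=|Dv|(Q_0)$ together with $L^1$ convergence gives strict convergence, which completes (iii).
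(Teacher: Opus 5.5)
Your route differs from the paper's, and it is viable, but the step you call the main obstacle is only sketched.

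\textbf{How the paper avoids the issue.} The paper never has to face boundary concentration in general. It splits $|Du|=|\nabla u|\mathcal L^N+|D^ju|+|D^cu|$ into mutually singular parts (Lemma~\ref{lem:bvstruct}). On the first two parts it computes the blow-up explicitly along the whole family $r\to0$:
\begin{itemize}
\item Lemma~\ref{lem:acpar} uses the Poincar\'e inequality at Lebesgue points of $\nabla u$ and gives the limit $W_{q^{\rm ac}}$;
\item Lemma~\ref{lem:acjump} uses the approximate tangent hyperplane of $J_u$ and gives the limit $W_{q^{\rm j}}$.
\end{itemize}
These limits are known in advance to satisfy $|DW_q|(Q_0)=1=|Du^{R_x}_{x,r}|(Q_0)$, so strict convergence comes for free. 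On the Cantor part the paper imports strict convergence along a sequence from \cite[Lemma~4.5]{cfi24}. It then discards the countably many radii whose cube boundaries carry $|Du|$-mass.

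\textbf{What your route buys and costs.} Your argument (polar decomposition, compactness, and $D_iv=0$ for $i<N$, $D_Nv\ge0$ from $\int_{Q_0}|\nu_{v_r}-e_N|\,d|Dv_r|\to0$) treats every point the same way. It needs neither the structure theorem nor the external citation. It produces an unspecified nondecreasing profile along a subsequence, which is all the lemma asks for. The price is that you must rule out loss of mass at $\partial Q_0$ at every point, and that is the step you leave as a sketch.

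\textbf{Why the sketch is not yet an argument.} A doubling-type bound on the \emph{limit} measure does not help. Without a uniform bound on the normalized measures on a neighbourhood of $\overline{Q_0}$, they need not converge there at all. Convergence inside $Q_0$ alone cannot detect mass piling up at $\partial Q_0$. Your fallback, as phrased, assumes exactly the boundary control that has to be proved.

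\textbf{How to close the step.} Use the a.e.\ existence of doubling scales.
\begin{enumerate}
\item For $|Du|$-a.e.\ $x$, $\liminf_{\rho\to0}|Du|(B(x,\rho))/\rho^N>0$, by Besicovitch covering. Iterating then gives $r_k\to0$ with $|Du|(B(x,\sqrt N r_k))\le C_N|Du|(B(x,r_k/2))$.
\item Let $\mu_k$ be the push-forward of $|Du|$ under $y\mapsto R_x^{-1}(y-x)/r_k$, divided by $|Du|(Q^{R_x}_{r_k}(x))$. The bound in step 1 gives $\mu_k(2Q_0)\le C_N$. A subsequence therefore converges weakly-$*$ on $2Q_0$ to some $\sigma$ with $\sigma(\overline{Q_0})\ge1$.
\item Fix $t\in(1,2)$ outside the countable set where $\sigma(\partial(tQ_0))>0$ or $|Du|(\partial Q^{R_x}_{tr_k}(x))>0$ for some $k$, and work at the scales $tr_k$.
\item The renormalized measures at these scales converge to some $\widetilde\sigma$ with $\widetilde\sigma(\partial Q_0)=0$ and $\widetilde\sigma(Q_0)=1$. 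Your total-variation estimate then gives $D_Nv=\widetilde\sigma\llcorner Q_0$, hence $|Dv|(Q_0)\ge1$, and so $|Dv|(Q_0)=1$ by lower semicontinuity.
\end{enumerate}
The same doubling bound also transfers the $|Du|$-Lebesgue point property of $\nu_u$ from balls to the cubes $Q^{R_x}_{tr_k}(x)$, whose orientation depends on $x$. That settles your hesitation at that point; alternatively, you can differentiate along such cubes using a Morse-type covering as in Lemma~\ref{lem:morse}.
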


We first recall some notions used below. Let $\Omega\subset\mathbb R^N$ be open,
$u\in L^1_{\mathrm{loc}}(\Omega)$, and $x\in\Omega$.  Following \cite[Definition~3.63]{afp00}, a number
$a\in\mathbb R$ is the \emph{approximate limit} of $u$ at $x$ if
\begin{align*}
\lim_{r\to 0^+}\fint_{B(x,r)}|u(y)-a|\,dy=0.
\end{align*}
Such a number is unique whenever it exists and is denoted by
$\widetilde u(x)$.  Following \cite[Definition~3.70]{afp00}, at any
point $x\in\Omega$ at which $\widetilde u(x)$ exists, the function $u$ is \emph{approximately
differentiable} at $x$ if there exists $\xi\in\mathbb R^N$ such that
\begin{align*}
\lim_{r\to 0^+}\frac{1}{r}\fint_{B(x,r)}\left|u(y)-\widetilde u(x)-\xi\cdot(y-x)\right|\,dy
=0.
\end{align*}
The vector $\xi$ is unique and is denoted by $\nabla u(x)$, which is
called the \emph{approximate gradient} of $u$ at $x$.

For  any $x\in\Omega$, $r\in(0,\infty)$, and $\nu\in\mathbb S^{N-1}$, define
\begin{align*}
B^\pm(x,r,\nu):=\left\{y\in B(x,r):\ \pm(y-x)\cdot\nu>0\right\}.
\end{align*}
Following \cite[Definition~3.67]{afp00}, the \emph{approximate jump set}
$J_u$ consists of all points
$x\in\Omega$ for which there exist $u^+(x),u^-(x)\in\mathbb R$ and
$\nu_u(x)\in\mathbb S^{N-1}$ such that $u^+(x)\neq u^-(x)$,
\begin{align*}
\lim_{r\to0^+}\fint_{B^+(x,r,\nu_u(x))}|u(y)-u^+(x)|\,dy=0,
\end{align*}
and
\begin{align*}
\lim_{r\to0^+}\fint_{B^-(x,r,\nu_u(x))}|u(y)-u^-(x)|\,dy=0.
\end{align*}
The triplet $\left(u^+(x),u^-(x),\nu_u(x)\right)$ is unique up to interchanging $u^+(x)$
and $u^-(x)$ and reversing the direction of $\nu_u(x)$.

Recall that an $\mathcal H^{N-1}$-measurable set $E\subset\mathbb R^N$ is countably \emph{$(N-1)$-rectifiable}
if there exists a sequence of Lipschitz functions
$\{f_k\}_{k\in\mathbb{N}}$ mapping $\mathbb{R}^{N-1}$ to $\mathbb{R}^N$
such that
\begin{align*}
\mathcal H^{N-1}\left(E\setminus\bigcup_{k\in\mathbb{N}}f_k\left(\mathbb R^{N-1}\right)
\right)=0.
\end{align*}
In other words, except for an $\mathcal H^{N-1}$-null subset,
$E$ is contained in the union of countably many Lipschitz images of $\mathbb R^{N-1}$.
For any $\nu\in\mathbb S^{N-1}$, let
\begin{align*}
\nu^\perp:=\left\{z\in\mathbb R^N:z\cdot\nu=0\right\}.
\end{align*}
The following result is the classical structure theorem for BV functions
(see, for instance, \cite[Proposition~3.69, Theorems 3.77, 3.78, and~3.83, Definition~3.91, and
Proposition~3.92]{afp00}).
\begin{lemma}\label{lem:bvstruct}
Let $N\in\mathbb N$, $\Omega\subset\mathbb R^N$ be an open set, and $u\in BV(\Omega)$.
\begin{enumerate}[{\rm(i)}]
\item For $\mathcal L^N$-a.e. $x\in\Omega$, the approximate gradient $\nabla u(x)$ exists  and
$\nabla u\in L^1(\Omega;\mathbb R^N)$.

\item The jump set $J_u$ is countably $(N-1)$-rectifiable.  Moreover, for
$\mathcal H^{N-1}$-a.e. $x\in J_u$, the approximate tangent
hyperplane $T_xJ_u$ exists and $T_xJ_u=\nu_u(x)^\perp.$

\item There exist unique finite $\mathbb R^N$-valued Radon measures $D^ju$ and
$D^cu$, which are singular with respect to Lebesgue measure, such that
\begin{align*}
Du=\nabla u\,\mathcal L^N+D^ju+D^cu.
\end{align*}
Moreover, the jump part has the representation
\begin{align*}
D^ju=\left(u^+-u^-\right)\nu_u\left(\mathcal H^{N-1}\llcorner J_u\right).
\end{align*}

\item The positive measures $|\nabla u|\mathcal L^N$, $|D^ju|$, and
$|D^cu|$ are mutually singular, and hence
\begin{align}\label{eq:bvvar}
|Du|=|\nabla u|\,\mathcal L^N+|D^ju|+|D^cu|.
\end{align}
\end{enumerate}
\end{lemma}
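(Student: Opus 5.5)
Lemma \ref{lem:bvstruct} is a compilation of classical facts, and I would not re-derive the fine-properties theory. Instead, I would assemble it item by item from \cite{afp00}, checking only that the normalizations and definitions used there (approximate limits, approximate jump points, approximate gradients) agree with the ones recalled above. The definitions above are taken verbatim from \cite[Definitions~3.63, 3.67, and~3.70]{afp00}, so the transfer should be direct.

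\textbf{Order of the steps.}
\begin{enumerate}[{\rm(1)}]
\item First I would invoke the Lebesgue decomposition of $Du$ with respect to $\mathcal L^N$, namely $Du=D^au+D^su$. I would then identify the density of $D^au$ with the approximate gradient via the Calder\'on--Zygmund type theorem \cite[Theorem~3.83]{afp00}. This gives (i): $\nabla u$ exists $\mathcal L^N$-a.e.\ and $\nabla u\in L^1(\Omega;\mathbb R^N)$, as the density of a finite measure.
\item For (ii), I would use the Federer--Vol'pert theorem \cite[Theorem~3.78]{afp00}. It states that $J_u$ is countably $(N-1)$-rectifiable and that $D^su\llcorner J_u=(u^+-u^-)\nu_u\,\mathcal H^{N-1}\llcorner J_u$. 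Combining this with \cite[Proposition~3.69]{afp00} and the a.e.\ existence of approximate tangent spaces for rectifiable sets gives $T_xJ_u=\nu_u(x)^\perp$ for $\mathcal H^{N-1}$-a.e.\ $x\in J_u$.
\item Next I would set $D^ju:=D^su\llcorner J_u$ and $D^cu:=D^su\llcorner(\Omega\setminus J_u)$, following \cite[Definition~3.91]{afp00}. This yields the decomposition in (iii) together with the representation of $D^ju$. Uniqueness holds because the three pieces are concentrated on mutually disjoint sets. The first is absolutely continuous with respect to $\mathcal L^N$. The second is concentrated on the $\sigma$-finite set $J_u$ with respect to $\mathcal H^{N-1}$. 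The third vanishes on every set of finite $\mathcal H^{N-1}$ measure, by \cite[Proposition~3.92]{afp00}.
\item Finally, (iv) follows from this mutual singularity. Because the three measures are concentrated on pairwise disjoint Borel sets, the total variation of their sum is the sum of their total variations, which is \eqref{eq:bvvar}.
\end{enumerate}

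\textbf{Main obstacle.} The only point needing any care is the mutual singularity of $|D^cu|$ and $|D^ju|$. This rests on \cite[Proposition~3.92]{afp00}: $|D^cu|$ vanishes on sets that are $\sigma$-finite with respect to $\mathcal H^{N-1}$, and $J_u$ is such a set. Everything else is a direct citation.
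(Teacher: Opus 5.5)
Your proposal is correct and matches the paper, which gives no argument beyond citing \cite[Proposition~3.69, Theorems~3.77, 3.78, and~3.83, Definition~3.91, and Proposition~3.92]{afp00}, the same results you assemble. Your $D^cu:=D^su\llcorner(\Omega\setminus J_u)$ agrees with the version defined via the approximate discontinuity set $S_u$, because $\mathcal H^{N-1}(S_u\setminus J_u)=0$ and $|Du|$ does not charge $\mathcal H^{N-1}$-null sets. The only loosely justified step is $T_xJ_u=\nu_u(x)^\perp$: the a.e.\ existence of approximate tangent spaces alone does not give it, and it is cleanly supplied by the trace theorem \cite[Theorem~3.77]{afp00}, the one result in the paper's citation you did not invoke.
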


A positive Radon measure $\mu$ on $\mathbb R^N$ is called
\emph{$(N-1)$-rectifiable} if there exist a countably
$(N-1)$-rectifiable set $E\subset\mathbb R^N$ and a Borel function
$\theta:E\to(0,\infty)$ such that
\begin{align*}
 \mu=\theta\left(\mathcal H^{N-1}\llcorner E\right).
\end{align*}
The function $\theta$ is called the \emph{density} of the rectifiable measure.
If $P\subset\mathbb R^N$ is an $(N-1)$-dimensional linear
subspace and $\theta\in(0,\infty)$, we say that $\mu$ has
\emph{approximate tangent hyperplane $P$ with density $\theta$ at
$x$} if, for any $\varphi\in C_{\rm c}(\mathbb R^N)$,
\begin{align*}
\lim_{r\to0^+}\frac{1}{r^{N-1}}\int_{\mathbb R^N}\varphi\left(\frac{y-x}{r}\right)\,d\mu(y)
=\theta\int_P\varphi(z)\,d\mathcal H^{N-1}(z).
\end{align*}
The following standard result describes the tangent hyperplane and density of
a rectifiable measure (see, for instance,  \cite[Theorem~2.83]{afp00}).
\begin{lemma}\label{lem:recttan}
Let $\mu:=\theta(\mathcal H^{N-1}\llcorner E)$ be an
$(N-1)$-rectifiable positive Radon measure on $\mathbb R^N$.
Then, for $\mu$-a.e. $x\in E$, the approximate tangent hyperplane $T_xE$
exists and $\mu$ has approximate tangent hyperplane $T_xE$ with density
$\theta(x)$ at $x$.  In particular,
\begin{align*}
\lim_{r\to0^+}\frac{\mu(B(x,r))}{r^{N-1}}=\theta(x)\omega_{N-1}.
\end{align*}
Here $\omega_{N-1}$ is the volume of the unit ball in
$\mathbb R^{N-1}$, with the convention $\omega_0=1$.
\end{lemma}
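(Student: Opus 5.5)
The plan is to reduce the statement to the case of a single $C^1$ hypersurface carrying a constant density. Along the way I will show that the remaining pieces of $\mu$ are negligible at scale $r^{N-1}$ near $\mu$-a.e. point.

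\emph{Step 1: covering by $C^1$ hypersurfaces.} Since $E$ is countably $(N-1)$-rectifiable, a Whitney-type $C^1$ approximation of Lipschitz maps (Federer) gives countably many embedded $C^1$ hypersurfaces $M_i$ with $\mathcal H^{N-1}(E\setminus\bigcup_iM_i)=0$. Replacing $M_i$ by $M_i\setminus\bigcup_{k<i}M_k$, I may assume the sets $E_i:=E\cap M_i$ are pairwise disjoint Borel sets. It then suffices to prove the claim for $\mu$-a.e. $x\in E_i$, for each fixed $i$. For such $x$ I define $T_xE:=T_xM_i$; part of the argument below is that this is independent of the chosen cover at $\mu$-a.e. point.

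\emph{Step 2: good points.} Fix $i$. Since $\mu$ is Radon, $\theta\in L^1_{\mathrm{loc}}(\mathcal H^{N-1}\llcorner E)$. I discard three $\mu$-null subsets of $E_i$ and call the remaining points \emph{good}. A good point $x\in E_i$ satisfies:
\begin{enumerate}[(a)]
\item $\lim_{r\to0}r^{1-N}\int_{B(x,r)\cap M_i}|\theta\mathbf 1_{E}-\theta(x)|\,d\mathcal H^{N-1}=0$;
\item $\lim_{r\to0}r^{1-N}\mu\bigl(B(x,r)\setminus M_i\bigr)=0$;
\item $M_i$ is $C^1$ near $x$.
\end{enumerate}
Property (a) is a Lebesgue-point statement for the function $\theta\mathbf 1_E$ with respect to the measure $\mathcal H^{N-1}\llcorner M_i$. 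On a $C^1$ hypersurface this measure is locally doubling and comparable to $\mathcal L^{N-1}$ in a chart, so the Lebesgue differentiation theorem applies. Note that (a) also absorbs the part $M_i\setminus E$, where the integrand equals $\theta(x)$.

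Property (b) is the key density estimate, and I expect it to be the main obstacle. I would invoke the standard fact that, for a Radon measure $\nu$ and a Borel set $A$ with $\nu(A)=0$, the upper $(N-1)$-density of $\nu$ vanishes $\mathcal H^{N-1}$-a.e. on $A$. Concretely, if $\{x\in A:\Theta^{*}_{N-1}(\nu,x)>t\}$ had positive $\mathcal H^{N-1}$-measure, the upper-density comparison theorem would give $\nu\ge t\,\mathcal H^{N-1}$ on that set, which contradicts $\nu(A)=0$. I apply this with $\nu:=\mu\llcorner(\mathbb R^N\setminus M_i)$ and $A:=E_i$. Since $\mu\ll\mathcal H^{N-1}\llcorner E$, the exceptional set is also $\mu$-null.

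\emph{Step 3: blow-up at a good point.} Let $\varphi\in C_{\rm c}(\mathbb R^N)$ with $\operatorname{supp}\varphi\subset B(\mathbf 0,\rho)$. I decompose
\begin{align*}
\mu=\theta(x)\,\mathcal H^{N-1}\llcorner M_i+\bigl(\theta\mathbf 1_E-\theta(x)\bigr)\mathcal H^{N-1}\llcorner M_i+\mu\llcorner(\mathbb R^N\setminus M_i).
\end{align*}
\begin{itemize}
\item Rescaled by $r^{1-N}$ and tested against $\varphi((\cdot-x)/r)$, the second and third terms are bounded by $\|\varphi\|_\infty$ times the quantities in (a) and (b) on $B(x,\rho r)$. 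Hence they tend to $0$.
\item For the first term, I write $M_i$ near $x$ as a $C^1$ graph over $P:=T_xM_i$ with vanishing derivative at $x$. The rescaled sets $(M_i-x)/r$ converge locally in $C^1$ to $P$. By the area formula, $r^{1-N}\int\varphi((y-x)/r)\,d\mathcal H^{N-1}\llcorner M_i(y)\to\int_P\varphi\,d\mathcal H^{N-1}$.
\end{itemize}
This proves the tangent-plane statement with density $\theta(x)$. It also shows that $P$ is determined by $\mu$ alone, independently of the cover.

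\emph{Step 4: ball asymptotics.} For the last formula, I choose continuous $0\le\varphi_\delta\le\mathbf 1_{B(\mathbf 0,1)}\le\psi_\delta\le1$ with $\varphi_\delta=1$ on $B(\mathbf 0,1-\delta)$ and $\operatorname{supp}\psi_\delta\subset B(\mathbf 0,1+\delta)$. Applying Step 3 to both functions and using $\mathcal H^{N-1}(P\cap B(\mathbf 0,1))=\omega_{N-1}$ and $\mathcal H^{N-1}(P\cap\partial B(\mathbf 0,1))=0$, I let first $r\to0$ and then $\delta\to0$. This gives $\mu(B(x,r))/r^{N-1}\to\theta(x)\omega_{N-1}$.
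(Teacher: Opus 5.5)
Your argument is correct. The paper does not prove this lemma; it only cites Ambrosio--Fusco--Pallara, Theorem~2.83, and your proof is essentially the standard argument behind that result: cover $E$ by $C^1$ hypersurfaces, take Lebesgue points of $\theta\mathbf 1_E$ with respect to $\mathcal H^{N-1}\llcorner M_i$, use the upper-density comparison theorem to make $\mu\llcorner(\mathbb R^N\setminus M_i)$ negligible at scale $r^{N-1}$, and blow up the $C^1$ graph via the area formula.

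One slip in Step~1 needs fixing: keep each $M_i$ as the full $C^1$ hypersurface and disjointify only the pieces, $E_i:=(E\cap M_i)\setminus\bigcup_{k<i}M_k$. Property (c) and the graph blow-up in Step~3 require $M_i$ itself to be $C^1$ near $x$, which fails for $M_i\setminus\bigcup_{k<i}M_k$.
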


The following lemma is a special case of  the Besicovitch
differentiation theorem (see, for  instance, \cite[Theorem~2.22]{afp00}).
\begin{lemma}\label{lem:singdiff}
Let $\mu$ and $\lambda$ be finite positive Radon measures on an open set
$U\subset\mathbb R^N$.  If $\lambda\perp\mu$, then, for $\mu$-a.e. $x\in U$,
\begin{align*}
\lim_{r\to0^+}\frac{\lambda(B(x,r))}{\mu(B(x,r))}=0.
\end{align*}
\end{lemma}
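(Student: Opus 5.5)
The plan is to derive the statement from the Besicovitch covering theorem by the standard density-comparison argument (this is how \cite[Theorem~2.22]{afp00} is proved). First, since $\lambda\perp\mu$, we fix a Borel set $A\subset U$ with $\mu(A)=0$ and $\lambda(U\setminus A)=0$. Let $S$ be the set of points $x\in U$ with $\mu(B(x,r))=0$ for some $r>0$. Then $S$ is covered by countably many $\mu$-null balls, by Lindel\"of, so $\mu(S)=0$. Hence, for $x\notin S$ and $r<\operatorname{dist}(x,\partial U)$, the ratio $\lambda(B(x,r))/\mu(B(x,r))$ is well defined. It therefore suffices to show that, for every $t\in(0,\infty)$, the set
\begin{align*}
E_t:=\left\{x\in U\setminus(A\cup S):\ \limsup_{r\to0^+}\frac{\lambda(B(x,r))}{\mu(B(x,r))}>t\right\}
\end{align*}
has $\mu$-outer measure zero. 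Taking $t=1/n$ with $n\in\mathbb N$ and a countable union then gives the lemma.

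The key step is the comparison inequality
\begin{align*}
\lambda(V)\geq t\,\mu^*(E_t)\quad\text{for every open }V\text{ with }E_t\subset V\subset U.
\end{align*}
To prove it, consider the family of closed balls $\overline{B(x,r)}\subset V$ with $x\in E_t$ and $\lambda(\overline{B(x,r)})>t\,\mu(\overline{B(x,r)})$. By the definition of $E_t$ (and approximating open balls from inside by closed ones), this family contains arbitrarily small balls centred at each point of $E_t$, i.e.\ it is a fine cover of $E_t$. The Besicovitch covering theorem, in its Vitali-type form for an arbitrary Radon measure, yields a countable disjoint subfamily $\{\overline{B_i}\}_i$ covering $\mu$-almost all of $E_t$. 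Summing over the disjoint balls gives
\begin{align*}
t\,\mu^*(E_t)\leq\sum_i t\,\mu(\overline{B_i})\leq\sum_i\lambda(\overline{B_i})\leq\lambda(V).
\end{align*}

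Now $E_t\subset U\setminus A$, so $\lambda(E_t)=0$. By outer regularity of the finite Radon measure $\lambda$, for any $\delta>0$ there is an open $V\supset E_t$ with $V\subset U$ and $\lambda(V)<\delta$. The inequality above then gives $\mu^*(E_t)\leq\delta/t$, and letting $\delta\to0$ yields $\mu^*(E_t)=0$. Consequently $\limsup_{r\to0^+}\lambda(B(x,r))/\mu(B(x,r))=0$ for $\mu$-a.e.\ $x$. Since the ratio is nonnegative, the limit exists and equals $0$.

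I expect the main technical point to be the covering step. One must use Besicovitch rather than Vitali, because $\mu$ need not be doubling. One must also restrict to balls whose closures lie in $V\subset U$ so that everything stays inside $U$. Finally, since $E_t$ need not be $\mu$-measurable a priori, the argument should be phrased with the outer measure $\mu^*$, which is what was done above.
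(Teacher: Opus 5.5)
Your proof is correct. The paper gives no argument of its own: it simply invokes the Besicovitch differentiation theorem (Theorem~2.22 in \cite{afp00}). Your route (a Besicovitch--Vitali fine cover by closed balls giving $t\,\mu^*(E_t)\le\lambda(V)$, then outer regularity of $\lambda$ on the $\lambda$-null set $U\setminus A$) is exactly the standard covering-based density comparison behind that theorem, specialized to the case $\lambda\perp\mu$.
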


For any $t\in I_0, $ define $q^{\rm ac}(t):=t$. Moreover, for any $t\in I_0$, let
\begin{align*}
q^{\rm j}(t)&:=\begin{cases}
-\frac{1}{2},&t\in(-\frac{1}{2},0),\\
0,&t=0,\\
\frac{1}{2},&t\in(0,\frac{1}{2}).
\end{cases}
\end{align*}
Since $q^{\mathrm{ac}}$ and $q^{\mathrm{j}}$ are nondecreasing, it follows that
\begin{align}\label{eq:profmass}
|Dq^{\rm ac}|(I_0)=|Dq^{\rm j}|(I_0)=1.
\end{align}
Recall that, for any given $q\in BV(I_0)$ and any $z\in Q_0$, $W_q(z):=q(z_N)$.
By a direct computation, we obtain that
\begin{align*}
DW_q=e_N\left[\left(\mathcal L^{N-1}\llcorner I_0^{N-1}\right)\otimes Dq\right]
\end{align*}
and hence
\begin{align*}
|DW_q|(Q_0)=|Dq|(I_0)
\end{align*}
This, together with \eqref{eq:profmass}, implies that
\begin{align*}
|DW_{q^{\mathrm{ac}}}|(Q_0)=|DW_{q^{\mathrm{j}}}|(Q_0)=1.
\end{align*}
We first consider the absolutely  continuous part. We have the  following conclusion.
\begin{lemma}\label{lem:acpar}
Let $N\in\mathbb N$, $\Omega\subset\mathbb{R}^N$ be an open set, and
$u\in BV(\Omega)$.  For $|\nabla u|\mathcal L^N$-a.e. $x\in\Omega$, there exists
$R_x\in\mathcal O(N)$ such that, for any sufficiently small $r\in(0,\infty),$
$Q_r^{R_x}(x)\subset\Omega$, $|Du|(Q_r^{R_x}(x))\in (0,\infty)$, and
\begin{align*}
\lim_{r\to0}u_{x,r}^{R_x}=W_{q^{\rm ac}}\quad\text{in }L^1(Q_0),
\end{align*}
where $u^{R_x}_{x,r}$ is defined as in \eqref{eq:rescale} with $R$ replaced by $R_x.$
\end{lemma}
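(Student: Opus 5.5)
We select the good points by combining Lemma \ref{lem:bvstruct} with Lemma \ref{lem:singdiff}. Let $A$ be the set of $x\in\Omega$ with the following four properties: $x$ is a point of approximate differentiability of $u$ with $\nabla u(x)\neq0$; $x$ is an $L^1$-Lebesgue point of $\nabla u$, i.e.
\begin{align*}
\lim_{\rho\to0}\fint_{B(x,\rho)}|\nabla u(y)-\nabla u(x)|\,dy=0;
\end{align*}
the ratio $(|D^ju|+|D^cu|)(B(x,\rho))/\rho^N$ tends to $0$; and the ratio $|Du|(B(x,\rho))/\rho^N$ tends to $|\nabla u(x)|$. The Lebesgue differentiation theorem shows that $\mathcal L^N$-a.e. point of $\{\nabla u\neq0\}$ lies in $A$. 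Applying Lemma \ref{lem:singdiff} with $\mu:=\mathcal L^N$ and $\lambda:=|D^ju|+|D^cu|$, which is singular with respect to $\mathcal L^N$, gives the third property for $\mathcal L^N$-a.e. $x$. Hence $A$ has full $|\nabla u|\mathcal L^N$-measure. For $x\in A$ we set $\xi:=\nabla u(x)$ and choose $R_x\in\mathcal O(N)$ with $R_xe_N=\xi/|\xi|$.

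The first step is the mass normalization. Since $Q_r^{R_x}(x)$ is squeezed between the balls $B(x,r/2)$ and $B(x,\sqrt N r/2)$, we use \eqref{eq:bvvar} and the density properties to conclude that
\begin{align*}
\frac{|Du|(Q_r^{R_x}(x))}{r^N}\to|\xi|\quad\text{as }r\to0.
\end{align*}
The cubes could be compared with balls only up to constants, but it is cleaner to argue directly on cubes: the absolutely continuous part converges by the Lebesgue point property, and the singular part is $o(r^N)$ by the inclusion of the cube in a ball. In particular $|Du|(Q_r^{R_x}(x))\in(0,\infty)$ for small $r$, and $Q_r^{R_x}(x)\subset\Omega$ for small $r$ because $\Omega$ is open. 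Consequently, in \eqref{eq:rescale} the prefactor satisfies
\begin{align*}
\frac{r^{N-1}}{|Du|(Q_r^{R_x}(x))}=\frac{1+o(1)}{|\xi|r}.
\end{align*}

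The second step compares the rescaled function with the affine profile. As in the proof of Theorem \ref{thm:pliminf}, we have $\xi\cdot(rR_xz)=|\xi|r\ell(z)=|\xi|r\,q^{\mathrm{ac}}(z_N)$. We write $b:=\fint_{Q_r^{R_x}(x)}u\,dy$. Because $\ell$ has zero mean on $Q_0$, the mean of $u$ over the cube and the approximate limit $\widetilde u(x)$ differ by $o(r)$. Then
\begin{align*}
\int_{Q_0}\left|\frac{u(x+rR_xz)-b}{|\xi|r}-\ell(z)\right|dz
\le\frac{1}{|\xi|r^{N+1}}\int_{Q_r^{R_x}(x)}|u(y)-\widetilde u(x)-\xi\cdot(y-x)|\,dy+\frac{|b-\widetilde u(x)|}{|\xi|r}.
\end{align*}
Both terms tend to $0$ by approximate differentiability; the integral over the cube is dominated by the integral over the circumscribed ball. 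Finally, the factor $1+o(1)$ multiplies a quantity that stays bounded in $L^1(Q_0)$. This yields $u_{x,r}^{R_x}\to W_{q^{\mathrm{ac}}}$ in $L^1(Q_0)$.

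The main obstacle is to check that the approximate differentiability of a BV function (Lemma \ref{lem:bvstruct}(i)) holds in the $L^1$-averaged form used above, together with the existence of $\widetilde u(x)$. We would simply intersect $A$ with the set of points where \cite[Theorem~3.83]{afp00} gives this $L^1$ approximate differentiability; this set has full $\mathcal L^N$-measure. If needed, a Poincaré inequality on the cube could replace this step. However, since only $\nabla u\in L^1$ is available and $Du$ has a singular part, we would rely on the quoted structure theorem rather than on Poincaré, because the Poincaré inequality involves $|D u-\xi\mathcal L^N|(Q)$, which is $o(r^N)$ by the density properties above.
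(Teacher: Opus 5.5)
Your argument is correct, and all but one step coincides with the paper's proof. The shared steps are the good set (Lebesgue points of $\nabla u$ with $\nabla u(x)\neq0$, plus vanishing $r^{-N}$-density of $|D^ju|+|D^cu|$ via Lemma \ref{lem:singdiff}), the rotation $R_xe_N=\nabla u(x)/|\nabla u(x)|$, the normalization $|Du|(Q_r^{R_x}(x))/r^N\to|\nabla u(x)|$ argued directly on cubes, and the final rescaling by $|\nabla u(x)|r/A_r\to1$.

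The difference is how you show that $u(x+rR_x\,\cdot)-b_r$ is within $o(r)$ of $|\nabla u(x)|r\,\ell$ in $L^1(Q_0)$. You invoke approximate differentiability, i.e.\ the Calder\'on--Zygmund theorem \cite[Theorem~3.83]{afp00}. With the paper's definition of the approximate gradient, this is exactly what Lemma \ref{lem:bvstruct}(i) and (iii) already provide, including the existence of $\widetilde u(x)$, so your ``main obstacle'' is not one. You then also have to compare $b_r$ with $\widetilde u(x)$, which you do correctly using that $y-x$ has zero mean on the cube.

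The paper instead applies the BV Poincar\'e inequality on the cube to $g(y):=u(y)-\nabla u(x)\cdot(y-x)$, whose mean is exactly $b_r$:
\begin{align*}
\fint_{Q_r^{R_x}(x)}|g-b_r|\,dy\lesssim r^{1-N}|Dg|\left(Q_r^{R_x}(x)\right)
=r^{1-N}\left[\int_{Q_r^{R_x}(x)}|\nabla u-\nabla u(x)|\,dy+\left(|D^ju|+|D^cu|\right)\left(Q_r^{R_x}(x)\right)\right]=o(r).
\end{align*}
So the paper uses only the two density properties that the mass normalization needs anyway, and never mentions $\widetilde u(x)$. Your route adds one more full-measure set and relies on a deeper theorem, whose proof is essentially this Poincar\'e estimate. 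In exchange, your comparison with the affine profile is a pure change of variables.

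Your closing reluctance about Poincar\'e is unfounded. The singular part causes no trouble, because the BV Poincar\'e inequality is stated with the full measure $|Dg|=|Du-\nabla u(x)\mathcal L^N|$, and you yourself observe that this is $o(r^N)$ on your cubes.

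One harmless slip: your fourth property should read $|Du|(B(x,\rho))/\rho^N\to\mathcal L^N(B(\mathbf 0,1))\,|\nabla u(x)|$. You never use it, since you argue directly on cubes.
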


\begin{proof}
Let $D^su:=D^ju+D^cu$. By Lemma~\ref{lem:bvstruct}(iii),
we find that $|D^su|\perp\mathcal L^N$.
Using this  and Lemma \ref{lem:singdiff}, we conclude  that, for $\mathcal L^N$-a.e.
$x\in\Omega$,
\begin{align}\label{eq:acsing}
\lim_{\rho\to0^+}\frac{|D^su|(B(x,\rho))}{\rho^N}=0.
\end{align}
From the Lebesgue differentiation theorem and the fact that $\nabla u\in L^1(\Omega;\mathbb R^N)$,
we infer that, for $\mathcal L^N$-a.e. $x\in\Omega,$ $x$ is a Lebesgue point of $\nabla u$. Moreover,
\begin{align*}
\left(|\nabla u|\mathcal L^N\right)\left(\left\{x\in\Omega:\nabla u(x)=0\right\}\right)
=\int_{\left\{y\in\Omega:\ \nabla u(y)=0\right\}}|\nabla u(y)|\,dy=0.
\end{align*}
Thus, for $|\nabla u|\mathcal L^N$-a.e. $x\in\Omega$, the vector
$\nabla u(x)$ is nonzero, $x$ is a Lebesgue point of $\nabla u$, and \eqref{eq:acsing} holds.
Fix such a point $x$ and choose $R_x\in\mathcal O(N)$ such that
$R_xe_N=\frac{\nabla u(x)}{|\nabla u(x)|}.$ Then
\begin{align}\label{eq:acfine}
\lim_{r\to0}\fint_{Q_r^{R_x}(x)}|\nabla u(y)-\nabla u(x)|\,dy=0\ \ \text{and}\ \
\lim_{r\to0}\frac{|D^su|(Q_r^{R_x}(x))}{r^N}=0.
\end{align}
Let $$A_r:=\frac{|Du|(Q_r^{R_x}(x))}{r^{N-1}}.$$
By Lemma~\ref{lem:bvstruct}(iv) and \eqref{eq:acfine}, we conclude that
\begin{align}\label{eq:acmass}
\lim_{r\to 0}\frac{A_r}{r}=\lim_{r\to 0}\fint_{Q_r^{R_x}(x)}|\nabla u(y)|\,dy
+\frac{|D^su|(Q_r^{R_x}(x))}{r^N}=|\nabla u(x)|.
\end{align}
Thus, for any sufficiently small $r\in(0,\infty)$, $A_r\in(0,\infty).$
For any $y\in\Omega$, define
$$g(y):=u(y)-\nabla u(x)\cdot(y-x).$$  Then
\begin{align*}
\fint_{Q_r^{R_x}(x)}g(y)\,dy=\fint_{Q_r^{R_x}(x)}u(y)\,dy=:b_r.
\end{align*}
Using this and the  Poincar\'e inequality, we find that, for any sufficiently small $r\in(0,\infty)$,
\begin{align*}
\frac{1}{A_r}\fint_{Q_r^{R_x}(x)}|g(y)-b_r|\,dy&\lesssim
\frac{1}{A_rr^{N-1}}|Dg|(Q_r^{R_x}(x))\\
&\sim\frac{r}{A_r}\left[\fint_{Q_r^{R_x}(x)}|\nabla u(y)-\nabla u(x)|\,dy
+\frac{|D^su|(Q_r^{R_x}(x))}{r^N}\right].
\end{align*}
This, together with \eqref{eq:acmass} and \eqref{eq:acfine}, implies that
\begin{align}\label{eq:acerror}
\lim_{r\to 0} \frac{1}{A_r}\fint_{Q_r^{R_x}(x)}|g(y)-b_r|\,dy=0.
\end{align}
From \eqref{eq:rescale} and the definitions of $u^{R_x}_{x,r}$, $g$, and $R_x$,
it follows that, for any sufficiently small $r\in(0,\infty)$ and  any $z\in Q_0,$
\begin{align*}
u_{x,r}^{R_x}(z)&=\frac{r^{N-1}}{|Du|(Q_r^{R_x}(x))}
\left[u(x+rR_xz)-b_r\right]=\frac{u(x+rR_xz)-b_r}{A_r}\\
&=\frac{g(x+rR_xz)-b_r}{A_r}+\frac{\nabla u(x)\cdot(rR_xz)}{A_r}\\
&=\frac{g(x+rR_xz)-b_r}{A_r}+\frac{|\nabla u(x)|rz_N}{A_r},
\end{align*}
which further implies that
\begin{align*}
\left\|u_{x,r}^{R_x}-W_{q^{\rm ac}}\right\|_{L^1(Q_0)}&\lesssim
\frac{1}{A_r}\fint_{Q_r^{R_x}(x)}|g-b_r|\,dy+
\left|\frac{|\nabla u(x)|r}{A_r}-1\right|\left\|W_{q^{\rm ac}}\right\|_{L^1(Q_0)}.
\end{align*}
Using this, \eqref{eq:acmass}, and \eqref{eq:acerror}, we conclude that
\begin{align*}
\lim_{r\to 0} u_{x,r}^{R_x}=W_{q^{\rm ac}}\ \ \text{in}\ \ L^1(Q_0).
\end{align*}
This completes the  proof of Lemma \ref{lem:acpar}.
\end{proof}
Next, we   consider the jump part. We have the  following conclusion.
\begin{lemma}\label{lem:acjump}
Let $N\in\mathbb N$, $\Omega\subset\mathbb{R}^N$ be an open set,  and $u\in BV(\Omega)$.
For $|D^ju|$-a.e. $x\in J_u$, there exists
$R_x\in\mathcal O(N)$ such that, for any
sufficiently small $r\in(0,\infty)$,
$Q_r^{R_x}(x)\subset\Omega$, $|Du|(Q_r^{R_x}(x))\in (0,\infty)$, and
\begin{align}\label{eq:jumptan}
\lim_{r\to0}u_{x,r}^{R_x}=W_{q^{\rm j}}\quad\text{in }L^1(Q_0),
\end{align}
where $u^{R_x}_{x,r}$ is defined as in \eqref{eq:rescale} with $R$ replaced by $R_x.$
\end{lemma}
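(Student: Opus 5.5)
Our strategy follows the proof of Lemma \ref{lem:acpar}, with the affine tangent replaced by the two-valued jump profile. We restrict to points $x\in J_u$ with four properties: $|u^+(x)-u^-(x)|>0$; the one-sided approximate limits exist with normal $\nu:=\nu_u(x)$ oriented so that $u^+(x)>u^-(x)$; the approximate tangent hyperplane of $J_u$ at $x$ exists and equals $\nu^\perp$ (Lemma \ref{lem:bvstruct}(ii)); and Lemma \ref{lem:recttan} applies to $\mu:=|D^ju|=|u^+-u^-|\,\mathcal H^{N-1}\llcorner J_u$ at $x$ with density $\theta(x)=u^+(x)-u^-(x)$. Since $|\nabla u|\mathcal L^N+|D^cu|\perp|D^ju|$ by Lemma \ref{lem:bvstruct}(iv), Lemma \ref{lem:singdiff} shows that for $|D^ju|$-a.e. $x$ we also have $(|\nabla u|\mathcal L^N+|D^cu|)(B(x,\rho))/|D^ju|(B(x,\rho))\to0$. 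All of these properties hold for $|D^ju|$-a.e. $x$. At such a point we choose $R_x\in\mathcal O(N)$ with $R_xe_N=\nu$. For small $r$, $Q_r^{R_x}(x)\subset\Omega$ because $x\in\Omega$ is interior.

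\textbf{Mass asymptotics.} We want
\[
\frac{|Du|(Q_r^{R_x}(x))}{r^{N-1}}\longrightarrow \theta(x)=u^+(x)-u^-(x)>0 .
\]
For the jump part, the approximate tangent statement in Lemma \ref{lem:recttan} gives weak convergence of $r^{1-N}\mu(x+r\,\cdot)$ to $\theta\,\mathcal H^{N-1}\llcorner\nu^\perp$. The rotated cube $R_xQ_0$ has $\nu^\perp\cap R_xQ_0=R_x(I_0^{N-1}\times\{0\})$ of $\mathcal H^{N-1}$-measure $1$, and its boundary meets $\nu^\perp$ in an $\mathcal H^{N-1}$-null set. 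Hence the cube is a continuity set of the limit measure, and $\mu(Q_r^{R_x}(x))/r^{N-1}\to\theta$. The remaining parts are controlled by the Besicovitch ratio: because $Q_r^{R_x}(x)\subset B(x,\sqrt N r)$ and $\mu(B(x,\sqrt Nr))\lesssim r^{N-1}$, they are $o(r^{N-1})$. In particular $A_r:=|Du|(Q_r^{R_x}(x))/r^{N-1}\in(0,\infty)$ for small $r$, and $A_r\to u^+-u^-$.

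\textbf{$L^1$ convergence.} Write $v_r(z):=u(x+rR_xz)$. The defining property of $J_u$, applied on half-balls of radius $\sqrt N r$ containing the half-cubes, yields $v_r\to u^+\mathbf 1_{\{z_N>0\}}+u^-\mathbf 1_{\{z_N<0\}}$ in $L^1(Q_0)$. The cube mean $b_r$ therefore tends to $(u^++u^-)/2$. Consequently
\[
u^{R_x}_{x,r}=\frac{v_r-b_r}{A_r}\longrightarrow \frac{(u^+-u^-)\bigl(\mathbf 1_{\{z_N>0\}}-\tfrac12\bigr)}{u^+-u^-}=W_{q^{\rm j}}
\]
in $L^1(Q_0)$, since $W_{q^{\rm j}}=\pm\frac12$ on the two half-cubes.

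\textbf{Main obstacle.} The delicate step is passing from ball-based density statements to rotated cubes in the mass asymptotics. Upper bounds follow from inclusion in balls, but the exact limit $\theta\cdot\mathcal H^{N-1}(\nu^\perp\cap R_xQ_0)=\theta$ needs the weak-convergence form of Lemma \ref{lem:recttan} together with the null-boundary argument described above. We will approximate $\mathbf 1_{R_xQ_0}$ from above and below by functions in $C_{\rm c}$ to make this rigorous.
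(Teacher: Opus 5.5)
Your proposal is correct and follows essentially the same route as the paper. The paper also uses Besicovitch differentiation to make $|\nabla u|\mathcal L^N+|D^cu|$ negligible against $|D^ju|$ on balls. It also uses the weak convergence of the blow-ups of $|D^ju|$ to $(u^+(x)-u^-(x))\,\mathcal H^{N-1}\llcorner\nu_u(x)^\perp$, together with the fact that $\partial(R_xQ_0)\cap\nu_u(x)^\perp$ is $\mathcal H^{N-1}$-null, to get $A_r\to u^+(x)-u^-(x)$; finally it uses half-balls of radius $\sqrt N r$ for the $L^1$ convergence of $u(x+rR_x\,\cdot)$ and of the cube means.
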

\begin{proof}
Let  $\eta:=|\nabla u|\mathcal L^N+|D^cu|$. By Lemma~\ref{lem:bvstruct}(iv), we conclude that $|Du|=|D^ju|+\eta$
and $|D^ju|\perp\eta.$ Extend both $|D^ju|$ and $\eta$ by zero from $\Omega$ to $\mathbb R^N$, and denote the
extensions still by $|D^ju|$ and $\eta$. Since $\eta\perp|D^ju|$,
it follows that, for $|D^ju|$-a.e. $x\in\Omega$,
\begin{align}\label{eq:jrel}
\lim_{\rho\to0^+}\frac{\eta(B(x,\rho))}{|D^ju|(B(x,\rho))}=0.
\end{align}
By Lemma \ref{lem:bvstruct}(iii), we find that
\begin{align*}
|D^ju|=|u^{+}-u^{-}| \left(\mathcal H^{N-1}\llcorner J_u\right)
\end{align*}
From this and  Lemma \ref{lem:recttan}, we infer that, for $|D^ju|$-a.e. $x\in J_u$,
\begin{align}\label{eq:jball}
\lim_{\rho\to0^+}\frac{|D^ju|(B(x,\rho))}{\rho^{N-1}}=|u^+(x)-u^-(x)|\omega_{N-1}.
\end{align}
Using Lemma \ref{lem:bvstruct}(ii) and
Lemma \ref{lem:recttan} again, we conclude that, for $|D^ju|$-a.e. $x\in J_u$,
$|D^ju|$ has approximate tangent hyperplane $T_x J_u$ with density $|u^+-u^-|$. Thus,
for $|D^ju|$-a.e. $x\in J_u$ and  any $\varphi\in C_{\rm c}(\mathbb R^N)$,
\begin{align}\label{eq:jtan}
\lim_{r\to0^+}\frac{1}{r^{N-1}}\int_{\mathbb R^N}
\varphi\left(\frac{y-x}{r}\right)\,d|D^ju|(y)=|u^+(x)-u^-(x)|
\int_{\nu_u(x)^\perp}\varphi(z)\,d\mathcal H^{N-1}(z).
\end{align}
Let $x\in J_u$  be such that  \eqref{eq:jball}, \eqref{eq:jtan}, and \eqref{eq:jrel} hold.
Interchanging $u^+(x)$ and $u^-(x)$ and reversing $\nu_u(x)$ if necessary, we may assume that
$s(x):=u^+(x)-u^-(x)>0.$ Choose $R_x\in\mathcal O(N)$ such that $R_xe_N=\nu_u(x).$ Let
\begin{align*}
A_r:=\frac{|Du|(Q_r^{R_x}(x))}{r^{N-1}}\ \ \text{and}\ \ b_r:=\fint_{Q_r^{R_x}(x)}u(y)\,dy.
\end{align*}
By the definition of $ u_{x,r}^{R_x}$, we find that, for any $z\in Q_0,$
\begin{align}\label{eq:shenying}
u_{x,r}^{R_x}(z)=\frac{u(x+rR_xz)-b_r}{A_r}.
\end{align}
Now, we show that
\begin{align}\label{eq:jumpmass}
\lim_{r\to0^+}A_r=s(x).
\end{align}
For any Borel set $E\subset\mathbb R^N$, define
\begin{align*}
\sigma_r(E):=\frac{1}{r^{N-1}}|D^ju|(x+rE)\ \ \text{and}\ \  \sigma
:=s(x)\left(\mathcal H^{N-1}\llcorner\nu_u(x)^\perp\right).
\end{align*}
Let $A:=R_xQ_0.$ Then
\begin{align*}
\sigma_r(A)=\frac{|D^ju|(Q_r^{R_x}(x))}{r^{N-1}}
\end{align*}
and
\begin{align}\label{eq:chepiao}
\sigma(\partial A)&=s(x)\int_{\nu_u(x)^\perp}
\mathbf 1_{\partial A}(z)\,d\mathcal H^{N-1}(z)=
s(x)\int_{\nu_u(x)^\perp}\mathbf 1_{R_x\partial Q_0}(z)\,d\mathcal H^{N-1}(z)\notag\\
&=s(x)\int_{e_N^\perp}\mathbf 1_{\partial Q_0}(w)\,d\mathcal H^{N-1}(w)
=s(x)\mathcal H^{N-1}\left(e_N^\perp\cap\partial Q_0\right)=0.
\end{align}
On the other hand, from \eqref{eq:jtan}, we deduce that, for any $\varphi\in C_{\rm c}(\mathbb R^N)$,
\begin{align*}
\lim_{r\to0^+}\int_{\mathbb R^N}\varphi(z)\,d\sigma_r(z)=\int_{\mathbb R^N}\varphi(z)\,d\sigma(z).
\end{align*}
Equivalently, $\sigma_r$ converges weakly to $\sigma$ as $r\to\infty.$ This, combined with
\eqref{eq:chepiao}, implies that
\begin{align}\label{eq:jpart}
\lim_{r\to0^+}\frac{|D^ju|(Q_r^{R_x}(x))}{r^{N-1}}=\lim_{r\to 0^+}\sigma_r(A)=\sigma (A)
=s(x)\mathcal H^{N-1}\left(e_N^\perp\cap Q_0\right)=s(x).
\end{align}
By \eqref{eq:jrel} and \eqref{eq:jball}, we conclude that
\begin{align*}
\lim_{\rho\to 0^+}\frac{\eta(B(x,\rho))}{\rho^{N-1}}=\lim_{\rho\to 0^+}
\frac{\eta(B(x,\rho))}{|D^ju|(B(x,\rho))}\frac{|D^ju|(B(x,\rho))}{\rho^{N-1}}=0.
\end{align*}
From this and \eqref{eq:jpart},  it follows  that
\begin{align*}
\lim_{r\to0^+}A_r=\lim_{r\to0^+}\left[\frac{|D^ju|(Q_r^{R_x}(x))}{r^{N-1}}
+\frac{\eta(Q_r^{R_x}(x))}{r^{N-1}}\right]=s(x).
\end{align*}
This proves \eqref{eq:jumpmass}.

Let
\begin{align*}
 Q_0^\pm:=Q_0\cap\left\{z\in\mathbb R^N:\ \pm z_N>0\right\}.
\end{align*}
By a change of variables and definitions of $u^+$ and $u^-,$ we find that
\begin{align*}
\int_{Q_0^+}|u(x+rR_xz)-u^+(x)|\,dz\lesssim\fint_{B^+(x,\sqrt{N}r,\nu_u(x))}|u(y)-u^+(x)|\,dy
\to0
\end{align*}
and
\begin{align*}
\int_{Q_0^-}|u(x+rR_xz)-u^-(x)|\,dz\lesssim\fint_{B^-(x,\sqrt{N}r,\nu_u(x))}|u(y)-u^-(x)|\,dy
\to0
\end{align*}
as $r\to 0^+.$ For any $z\in Q_0,$ let
\begin{align*}
w_x(z):=u^-(x)\mathbf 1_{\left\{z_N<0\right\}}+u^+(x)\mathbf 1_{\left\{z_N>0\right\}}.
\end{align*}
Then
\begin{align}\label{eq:jumplone}
\lim_{r\to0}u(x+rR_x\,\cdot)=w_x\ \ \text{in}\ \ L^1(Q_0).
\end{align}
This further implies that
\begin{align*}
\lim_{r\to0^+}b_r=\lim_{r\to0^+}\int_{Q_0}u(x+rR_xz)\,dz=\int_{Q_0}w_x(z)\,dz
 =\frac{u^+(x)+u^-(x)}{2}:=m_x,
\end{align*}
which, together with \eqref{eq:jumplone}, \eqref{eq:jumpmass}, and \eqref{eq:shenying},
implies that
\begin{align}\label{eq:jnorm}
\lim_{r\to 0^+}u_{x,r}^{R_x}=\frac{w_x-m_x}{s(x)}\ \ \text{in}\ \ L^1(Q_0).
\end{align}
Note that,  for any $z\in Q_0$ with $z_N>0$,
\begin{align*}
\frac{w_x(z)-m_x}{s(x)}&=\frac{u^+(x)-\dfrac{u^+(x)+u^-(x)}{2}}
{u^+(x)-u^-(x)}=\frac{1}{2},
\end{align*}
and, for any $z\in Q_0$ with $z_N<0,$
\begin{align*}
\frac{w_x(z)-m_x}{s(x)}&=\frac{u^-(x)-\dfrac{u^+(x)+u^-(x)}{2}}
{u^+(x)-u^-(x)}=-\frac{1}{2}.
\end{align*}
Thus, for $\mathcal L^N$-a.e.  of $Q_0$,
$$\frac{w_x-m_x}{s(x)}=W_{q^{\rm j}}.$$
From this and  \eqref{eq:jnorm}, we deduce that
\eqref{eq:jumptan} holds. This completes the  proof of Lemma \ref{lem:acjump}.
\end{proof}

Finally, we   consider the Cantor part. Since $D^cu$ is an $\mathbb R^N$-valued Radon measure,
it follows that there exists a Borel map $\theta_u^c:\ \Omega\to \mathbb S^{N-1}$ such that
\begin{align*}
D^cu=\theta_u^c\,|D^cu|.
\end{align*}
For $|D^cu|$-a.e. $x\in\Omega$, let $R_x\in \mathcal O(N)$ such that $R_xe_N
=\theta_u^c(x).$ The following lemma is a special case of \cite[Lemma~4.5]{cfi24}
with $m:=1$, $\eta:=1$ and $\xi:=\theta_u^c$.

\begin{lemma}\label{lem:cfi}
Let $N\in\mathbb N$, $\Omega\subset\mathbb{R}^N$ be an open set,
$\sigma$ be a positive Radon measure on
$\Omega$, and $u\in BV(\Omega)$. Then, for $|D^cu|$-a.e. $x\in\Omega$,
there exist a sequence $\{r_k\}_{k\in\mathbb N}\subset(0,\infty)$ with
$r_k\to0$ and a nondecreasing function
$q_x^{\rm c}:I_0\to\mathbb R$ such that
\begin{itemize}
\item [$\mathrm{(i)}$] for any $k\in\mathbb{N}$,
$\sigma(\partial Q^{R_x}_{r_k}(x))=0$;
\item [$\mathrm{(ii)}$]
\begin{align*}
\lim_{k\to\infty}\frac{|Du|(Q_{r_k}^{R_x}(x))}{r_k^N}
=\infty\ \ \text{and}\ \ \lim_{k\to\infty}\frac{|Du|(Q_{r_k}^{R_x}(x))}{r_k^{N-1}}
=0;
\end{align*}
\item [$\mathrm{(iii)}$] $|Dq_x^{\rm c}|(I_0)=1$;
\item [$\mathrm{(iv)}$]
\begin{align*}
\lim_{k\to\infty}u_{x,r_k}^{R_x}(z)=\lim_{k\to\infty}\frac{ u(x+r_kR_xz)
-\fint_{Q_{r_k}^{R_x}(x)}u(y)\,dy}{|Du|\left(Q_{r_k}^{R_x}(x)\right)r_k^{-N+1}
}=q_x^{\rm c}(z_N)
\end{align*}
strictly in $BV(Q_0).$
\end{itemize}
\end{lemma}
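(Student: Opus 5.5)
The plan is to prove this by a blow-up and compactness argument of De Giorgi--Ambrosio type. It proceeds in four steps: the density properties (ii), a tightness choice of radii, compactness together with rigidity of the limit, and finally strictness and the boundary condition (i).

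\emph{Step 1: density properties.} Since $|D^cu|$ vanishes on $\mathcal H^{N-1}$-$\sigma$-finite sets and is singular with respect to $\mathcal L^N$, I will invoke \cite[Proposition~3.92]{afp00} together with Lemma~\ref{lem:singdiff}. This gives, for $|D^cu|$-a.e.\ $x$, that $|Du|(B(x,\rho))/\rho^N\to\infty$ and $|Du|(B(x,\rho))/\rho^{N-1}\to0$. The same limits then hold with the cubes $Q_\rho^{R_x}(x)$ in place of the balls, by comparing each cube with its inscribed and circumscribed balls. This proves (ii) along \emph{every} sequence $r_k\to0$.

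Next I will fix $x$ with the following additional properties:
\begin{itemize}
\item $x$ is a $|Du|$-Lebesgue point of the polar density $\theta_u=dDu/d|Du|$;
\item $\theta_u(x)=\theta_u^c(x)$;
\item the Besicovitch ratio $|D^cu|(B(x,\rho))/|Du|(B(x,\rho))\to1$.
\end{itemize}
All of these hold $|D^cu|$-a.e.

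\emph{Step 2: choice of radii.} The point where I expect the main difficulty is ensuring that no mass escapes to $\partial Q_0$ in the limit, since otherwise the limit would have $|Dq|(I_0)<1$. I would first try the standard doubling-type selection (as in the proof of \cite[Theorem~2.44]{afp00}). Let $f(r):=|Du|(Q_r^{R_x}(x))$. For each $t\in(0,1)$ one has
\begin{align*}
\limsup_{r\to0}\frac{f(tr)}{f(r)}\geq t^{N}.
\end{align*}
Indeed, if instead $f(tr)\le (t^{N}-\epsilon)f(r)$ for all small $r$, iterating gives $f(t^kr_0)\lesssim (t^N-\epsilon)^k$, and hence $f(\rho)/\rho^N\to0$, contradicting Step~1.

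Taking $t_m\uparrow1$ and using a diagonal argument, I will pick $r_k\to0$ with $f(t_kr_k)\geq (t_k^{N}-1/k)f(r_k)$. I will additionally choose each $r_k$ outside the at most countable set of radii $r$ for which $\sigma(\partial Q_r^{R_x}(x))>0$ or $|Du|(\partial Q_r^{R_x}(x))>0$. This is possible because the boundaries of the cubes are disjoint for distinct $r$ and both measures are locally finite, and it gives (i). A small perturbation of $r_k$ preserves the ratio estimate, since $f$ is monotone.

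\emph{Step 3: compactness and rigidity.} Set $u_k:=u_{x,r_k}^{R_x}$. Each $u_k$ has zero mean and satisfies $|Du_k|(Q_0)=1$. By the Poincar\'e inequality and compact embedding, a subsequence converges in $L^1(Q_0)$ to some $v$ with $|Dv|(Q_0)\le1$.

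The Lebesgue-point property of $\theta_u$ transfers to the rescalings:
\begin{align*}
\int_{Q_0}|\theta_{u_k}-e_N|\,d|Du_k|\to0,
\end{align*}
where $\theta_{u_k}$ is the polar density of $Du_k$ and $R_x$ rotates $\theta_u^c(x)$ to $e_N$. From this and the convergence $Du_k\rightharpoonup Dv$, I will obtain $\partial_iv=0$ for $i<N$, since $\int\varphi\,dD_iu_k\to0$ for each test function $\varphi$. I will also obtain $D_Nv\ge0$, since $D_Nu_k$ differs from the positive measure $|Du_k|$ by a measure whose total variation tends to $0$. Hence $v(z)=q_x^{\rm c}(z_N)$ with $q_x^{\rm c}$ nondecreasing.

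\emph{Step 4: strictness.} By the choice in Step~2, $|Du_k|(\overline{t_kQ_0})\ge t_k^N-1/k\to1$, so the mass of $Du_k$ concentrates inside $Q_0$. Weak-$*$ convergence then gives $D_Nv(Q_0)\ge\limsup_k D_Nu_k(t_mQ_0)\geq t_m^N-o(1)$ for every fixed $m$. Combined with $|Dv|(Q_0)\le1$, this yields $|Dq_x^{\rm c}|(I_0)=|Dv|(Q_0)=1$, which is (iii). Since also $|Du_k|(Q_0)=1$, the convergence is strict, which is (iv).
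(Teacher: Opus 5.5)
\textbf{There is a genuine gap in Steps~2 and~4}, at exactly the point you flagged as the main difficulty.

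Your selection only gives $|Du_k|(t_kQ_0)=f(t_kr_k)/f(r_k)\ge t_k^N-1/k$ with $t_k\uparrow 1$. This says the thin layer $Q_0\setminus t_kQ_0$ carries little mass. Because that layer shrinks to $\partial Q_0$, it says nothing about $|Du_k|(\overline{t_mQ_0})$ for a \emph{fixed} $m$. For $k>m$ one has $t_mQ_0\subsetneq t_kQ_0$, and almost all the mass of $|Du_k|$ may lie in $t_kQ_0\setminus t_mQ_0$, just inside $\partial(t_kQ_0)$; in the limit it would then sit on $\partial Q_0$.

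Hence the inequality $\limsup_k D_Nu_k(t_mQ_0)\ge t_m^N-o(1)$ is not justified. You only get $|Dv|(Q_0)\le 1$, so (iii) and the strictness in (iv) remain unproved. Nor can your estimate $\limsup_{r\to0}f(tr)/f(r)\ge t^N$ be upgraded to simultaneous control of all $t_m$ along a single sequence. It holds for each $t$ separately, along $t$-dependent radii.

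\textbf{A standard repair.} Use your estimate once, with a fixed ratio $t=1/\kappa$ where $\kappa\ge 2\sqrt N$, to get $s_k\to0$ with $f(\kappa s_k)\le 2\kappa^N f(s_k)$.
\begin{itemize}
\item The measures $A\mapsto |Du|(x+s_kR_xA)/f(s_k)$ on $\kappa Q_0$ have bounded mass. Along a subsequence they converge weakly-$*$ to some $\nu$ with $\nu(\overline{Q_0})\ge1$.
\item Pick $\tau\in(1,2)$ with $\nu(\partial(\tau Q_0))=0$. Work at radii $r_k:=\tau_k s_k$, with $\tau_k\to\tau$ chosen so that (i) holds.
\item Then $f(r_k)/f(s_k)\to\nu(\tau Q_0)$, and the rescaled total variations converge on $Q_0$ to a measure of mass exactly $1$.
\item Your rigidity step gives $|Du_k|\rightharpoonup D_Nv=|Dv|$. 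Combined with the previous point, this yields $|Dv|(Q_0)=1$ and strict convergence.
\end{itemize}

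The same doubling bound also closes a smaller gap in Step~3. The Lebesgue-point property of $\theta_u$ with respect to balls transfers to the cubes $Q_{r_k}^{R_x}(x)$ only if $|Du|(B(x,\sqrt N r_k/2))\lesssim f(r_k)$. Your radii do not ensure this; the new radii do, since $|Du|(B(x,\sqrt N r_k/2))\le f(\kappa s_k)\le 2\kappa^N f(r_k)$. Alternatively, you could differentiate along Morse families of rotated cubes.

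\textbf{Two minor points.}
\begin{itemize}
\item In Step~1, Lemma~\ref{lem:singdiff} is not what gives $|Du|(B(x,\rho))/\rho^{N-1}\to0$. You need the standard comparison between upper $(N-1)$-densities of a Radon measure and $\mathcal H^{N-1}$ to pass from ``$|D^cu|$ vanishes on $\mathcal H^{N-1}$-$\sigma$-finite sets'' to that limit.
\item The perturbation of $r_k$ in Step~2 works by left-continuity of $f$, perturbing $r_k$ downward and keeping some slack in the ratio. Monotonicity alone does not suffice.
\end{itemize}

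For comparison, the paper gives no argument for this lemma; it simply quotes \cite[Lemma~4.5]{cfi24}.
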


Now, we show Lemma \ref{lem:tangent}.
\begin{proof}[Proof of Lemma \ref{lem:tangent}]
By \eqref{eq:bvvar}, we find that the measures $|\nabla u|\mathcal L^N$, $|D^ju|,$ and $|D^cu|$
are mutually singular and $|Du|=|\nabla u|\mathcal L^N+|D^ju|+|D^cu|.$
Thus, to prove the  present lemma, it suffices to show the desired conclusions on a
full-measure set with respect to each of these three measures $|\nabla u|\mathcal{L}^N$,
$|D^ju|$, and $|D^cu|.$

We first consider the absolutely continuous part. Let $x$ be a point
such that  Lemma~\ref{lem:acpar} holds. Let all the symbols be the same as in
Lemma~\ref{lem:acpar} and $q_x:=q^{\rm ac}$.
Since $|Du|$ is finite, it follows that there are at
most countably many radii $r\in(0,\infty)$ such that
\begin{align*}
|Du|\left(\partial Q_r^{R_x}(x)\right)>0.
\end{align*}
Thus, we may  choose a sequence $\{r_k\}_{k\in\mathbb N}\subset(0,\infty)$
such that $\lim_{k\to\infty}r_k=0$ and, for any $k\in\mathbb{N},$
\begin{align*}
|Du|\left(\partial Q_{r_k}^{R_x}(x)\right)=0.
\end{align*}
This proves the desired conclusions for the absolutely continuous part.
Similarly, the jump part follows from Lemma~\ref{lem:acjump} and the Cantor part
follows from Lemma~\ref{lem:cfi} with $\sigma:=|Du|$. This completes
the proof of Lemma \ref{lem:tangent}.
\end{proof}

\subsection{A Lower Bound for Nondecreasing BV Functions of One Variable}
\label{subsec:monobv}
In this subsection, we show the following conclusion, which is essential in the proof of
Theorem \ref{thm:oneliminf}.

\begin{proposition}\label{prop:profile}
Let $\gamma\in(0,\infty)$ and $q\in BV(I_0)$.  Assume that $q$ is nondecreasing.  Then
\begin{align}\label{eq:profile}
 m_{1,\gamma}(W_q)\geq C_{N,1,\gamma}^{\mathrm{cell}}|Dq|(I_0),
\end{align}
where $I_0:=(-\frac{1}{2},\frac{1}{2})$ and, for any  $z\in Q_0,$ $W_q(z):=q(z_N).$
\end{proposition}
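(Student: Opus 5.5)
The plan is to reduce \eqref{eq:profile} to the affine cell constant by a localization-and-blow-up argument on the one-dimensional profile, exploiting that the cell constant is attained in the limit by blow-ups of any nondecreasing $q$ at almost every point of $|Dq|$, together with superadditivity of the liminf over disjoint slabs. Without loss of generality $|Dq|(I_0)>0$ and $m_{1,\gamma}(W_q)<\infty$. Fix an admissible sequence $\lambda_j\to\infty$, $u_j\to W_q$ in $L^1(Q_0)$ with $\lim_j G_{\lambda_j,1,\gamma}(u_j;Q_0)$ within $\varepsilon$ of $m_{1,\gamma}(W_q)$. For disjoint open subcubes $Q'\subset Q_0$, $G_{\lambda_j,1,\gamma}(u_j;Q_0)\ge\sum G_{\lambda_j,1,\gamma}(u_j;Q')$, so it suffices to bound the liminf on small cubes from below by $C^{\mathrm{cell}}_{N,1,\gamma}|DW_q|(Q')$ up to $\varepsilon$ and then cover.

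The local bound follows the proof of Theorem \ref{thm:pliminf}, but with $\mu:=|DW_q|=\mathcal L^{N-1}\otimes|Dq|$, axis-parallel cubes $Q_r(x)=x+rQ_0$, and Lemma \ref{lem:scaling} with $a:=|DW_q|(Q_r(x))/r^{N}$ (so $ar\cdot$ rescaled function reproduces $W_q$ minus its mean). This gives
\begin{align*}
\liminf_{j\to\infty}G_{\lambda_j,1,\gamma}(u_j;Q_r(x))\ge |DW_q|(Q_r(x))\, m_{1,\gamma}\bigl((W_q)_{x,r}\bigr),
\end{align*}
where $(W_q)_{x,r}$ is the normalized blow-up of \eqref{eq:rescale} with $R=\mathrm I_N$. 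The key claim is: for $\mu$-a.e. $x$ there are $r_k\to0$ with $(W_q)_{x,r_k}\to W_{\tilde q}$, $\tilde q$ nondecreasing, $|D\tilde q|(I_0)=1$, and $m_{1,\gamma}(W_{\tilde q})\ge C^{\mathrm{cell}}_{N,1,\gamma}$. Rather than proving this for a general $\tilde q$ (circular), I would use a one-dimensional blow-up at the level of $q$: for $|Dq|$-a.e. $t$, either $|Dq|$ has positive Lebesgue density (so the blow-up is $q^{\rm ac}$, i.e.\ $W_{q^{\rm ac}}=\ell$ and $m_{1,\gamma}(\ell)=C^{\mathrm{cell}}_{N,1,\gamma}$ by definition), or $t$ is an atom (blow-up $q^{\rm j}$), or $t$ is a Cantor point. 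Then Lemma \ref{lem:mplsc} plus \eqref{eq:mpscale} gives the local bound in the first case directly.

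The main obstacle is the jump and Cantor parts, where one must show $m_{1,\gamma}(W_{q})\ge C^{\mathrm{cell}}_{N,1,\gamma}$ for the step $q^{\rm j}$ (and general singular monotone profiles). My first attempt: approximate by monotone ``stretching''. Given $u_j\to W_q$, compose in the $z_N$-variable with the inverse of the monotone map and compare with $\ell$: i.e.\ show the cell problem for $W_q$ dominates that for $\ell$ via a rearrangement/coarea inequality in $1$D: for $p=1$, writing $q=\int \mathbf 1_{\{q>s\}}\,ds$, the condition $|u(x)-u(y)|\ge\lambda|x-y|^{1+\gamma}$ is not layer-cake additive, so instead I would use the alternative route: show $m_{1,\gamma}$ restricted to profiles $W_q$ is subadditive-convex in $q$, namely approximate $q$ in strict $BV(I_0)$ (and in $L^1$) by nondecreasing piecewise affine $q_n$ with slopes, then decompose $Q_0$ into slabs where $q_n$ is affine with positive slope. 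On each slab the recovery/lower bound reduces to $\ell$ after rescaling with slabs of aspect ratio handled by subdividing into cubes; lower semicontinuity (Lemma \ref{lem:mplsc}) does not go the right way here, so the genuine difficulty is to transfer lower bounds from $u_j\to W_q$ to sequences converging to $\ell$. I would attempt this by a monotone change of variables $z_N\mapsto q(z_N)$ (increasing $|x-y|$ distances in the $z_N$ direction only where $q$ is flat, which can only decrease the superlevel sets after appropriate $\lambda$ rescaling since $\gamma>0$), producing from $u_j$ a competitor for $\ell$ on a rectangle of height $|Dq|(I_0)$, whose energy is at most that of $u_j$; tiling that rectangle by cubes and Lemma \ref{lem:scaling} then yields \eqref{eq:profile}.
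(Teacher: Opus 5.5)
\textbf{There is a genuine gap: the final change-of-variables step fails, so the jump and Cantor cases remain unproved.}

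The blow-up half of your plan does not reduce the problem. At $|Dq|$-a.e.\ jump or Cantor point, the blow-ups of $W_q$ are again profiles $W_{\tilde q}$, with $\tilde q$ a normalized step or singular monotone function. As you note yourself, you are then back to the statement itself, so everything rests on the final ``monotone change of variables'' step. That step fails, for three reasons.

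\emph{Jumps.} A competitor built by reparametrizing the last variable, $\tilde u_j(z',s):=u_j(z',\phi(s))$ with $\phi$ nondecreasing, can only converge to $q\circ\phi$. The range of $q\circ\phi$ is contained in the range of $q$. If $q$ jumps at $t_0$, that range misses the interval $(q(t_0-),q(t_0+))$ except for at most one value, so $q\circ\phi\neq\ell$ on a set of positive measure. On the gap any generalized inverse $\phi$ is constant, so $\tilde u_j$ there is a single hyperplane slice of an $L^1$ function, which is not even well defined. A reparametrization of $z_N$ can trade jumps for flats and flats for jumps, but it can never turn a step into a ramp.

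\emph{Cantor parts.} For Cantor-type $q$, $\phi=q^{-1}$ carries a set of full measure onto a null set, so $L^1$ convergence is not transported.

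\emph{Energy comparison.} The claimed inequality is unsupported. A non-affine change of $z_N$ alone alters the threshold $\lambda|x-y|^{1+\gamma}$, the kernel $|x-y|^{\gamma-N}$, and $dx\,dy$ (through Jacobian factors) in a non-monotone way. Wherever $q'<1$, distances shrink in the new coordinates and the superlevel sets grow. Anisotropic stretching can also destroy precisely the microstructure that makes a sequence near-optimal for the cell problem.

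\textbf{The missing idea is to reverse the roles in your piecewise-affine attempt.} You correctly saw that approximating $q$ by piecewise affine $q_n$ puts Lemma~\ref{lem:mplsc} on the wrong side. Instead, approximate $\ell$ by functions built from copies of $q$. Normalize $q$ to $\tilde q$ with $0\le\tilde q\le1$, $\tilde q(-\frac12+)=0$, $\tilde q(\frac12-)=1$, and let $q_m$ be the staircase of $m$ rescaled copies $a_k+\frac1m\tilde q(m(t-a_k)-\frac12)$ on $(a_k,a_{k+1})$. Then $\sup_t|q_m(t)-t|\le 1/m$, so $W_{q_m}\to\ell$, and Lemma~\ref{lem:mplsc} gives $C^{\mathrm{cell}}_{N,1,\gamma}\le\liminf_m m_{1,\gamma}(W_{q_m})$. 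What remains is the \emph{upper} bound $m_{1,\gamma}(W_{q_m})\le m_{1,\gamma}(W_{\tilde q})$, which is a construction. The paper proves it as follows.
\begin{enumerate}[{\rm(i)}]
\item Truncate a near-optimal sequence for $W_{\tilde q}$ to $[0,1]$.
\item Make it equal to $W_{\tilde q}$ near $\partial Q_0$ using the averaged cut-off of Lemma~\ref{lem:replace}. The resulting collar cost vanishes as $\rho\to0$ by Lemma~\ref{lem:collar}.
\item Place rescaled copies in the $m^N$ subcubes via Lemma~\ref{lem:scaling}, with $\Lambda^{(m)}_j=m^\gamma\widetilde\lambda_j$.
\item Kill the cross-cube interactions with Lemma~\ref{lem:skeleton}. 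This applies because only short-range pairs count, those pairs lie in the collars where the copies agree with $W_{q_m}$, and $|DW_{q_m}|$ charges no mass on the cube skeleton, since $q_m$ has no atoms at the $a_k$.
\end{enumerate}
This argument treats absolutely continuous, jump, and Cantor profiles uniformly, so no blow-up classification of $q$ is needed.
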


We first prove the following lemma.

\begin{lemma}\label{lem:replace}
Let $\gamma\in(0,\infty)$ and $\{\lambda_j\}_{j\in\mathbb{N}}\subset (0,\infty)$ satisfy $\lim_{j\to\infty}\lambda_j=\infty.$
Let $w\in L^\infty(Q_0)$ and $\{v_j\}_{j\in\mathbb N}\subset L^\infty(Q_0)$ satisfy
$0\leq w, v_j\leq 1$ and $\lim_{j\to\infty}v_j=w$ in $L^1(Q_0).$ Fix $\rho\in(0,\frac{1}{20})$.
Then there exist sequences $\{\widetilde{\lambda_j}\}_{j\in\mathbb{N}}\subset (0, \infty)$
and $\{\widehat v_j\}_{j\in\mathbb N}\subset L^\infty(Q_0)$ such that the following statements hold.
\begin{enumerate}[{\rm(i)}]
\item  For any $j\in\mathbb{N}$, $0\leq\widehat v_j\leq1$, $\widehat v_j=w$ on
$\{z\in Q_0:\operatorname{dist}(z,\partial Q_0)\leq\rho\}$,
and $\lim_{j\to\infty}\widehat v_j=w$ in $L^1(Q_0).$
\item  $\lim_{j\to\infty}\frac{\widetilde{\lambda_j}}{\lambda_j}=1$.
\item  Let $A_\rho:=\{z\in Q_0:\operatorname{dist}(z,\partial Q_0)>\rho\}$ and $C_\rho
:=\{z\in Q_0:\operatorname{dist}(z,\partial Q_0)<3\rho\}.$ Then
\begin{align}\label{eq:repenergy}
\limsup_{j\to\infty} G_{\widetilde\lambda_j,1,\gamma}\left(\widehat v_j;Q_0\right)\leq
\limsup_{j\to\infty}G_{\lambda_j,1,\gamma}\left(v_j;A_\rho\right)+\limsup_{j\to\infty}
G_{\lambda_j,1,\gamma}\left(w;C_\rho\right).
\end{align}
\end{enumerate}
\end{lemma}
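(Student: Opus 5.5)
Since $v_j$ and $w$ are merely bounded (not Lipschitz), Lemma~\ref{lem:pcutoff}(ii) cannot be applied with a single fixed cutoff: the error term $\omega_N L_\eta\|v_j-w\|_{L^1}/(\gamma(1-\alpha))$ is fine, but there is no Lipschitz bound on $w$ to control cross interactions. The plan is therefore a De Giorgi-type averaging over many cutoff layers inside the annulus $\{\rho<\operatorname{dist}(z,\partial Q_0)<2\rho\}$, combined with Lemma~\ref{lem:pcutoff}(ii).

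\textbf{Step 1: choose the cutoff.} Fix $M\in\mathbb N$ (later $M=M_j\to\infty$) and split the annulus into $M$ thin layers
\[
S_i:=\Bigl\{z:\ \rho+\tfrac{(i-1)\rho}{M}<\operatorname{dist}(z,\partial Q_0)<\rho+\tfrac{i\rho}{M}\Bigr\},\qquad i=1,\dots,M.
\]
For each $i$, let $\eta_i$ be the Lipschitz function equal to $1$ on $\{\operatorname{dist}\ge \rho+i\rho/M\}$, equal to $0$ on $\{\operatorname{dist}\le\rho+(i-1)\rho/M\}$, and linear in between, so that $L_{\eta_i}\le M/\rho$. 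Put $\widehat v_j^{(i)}:=\eta_iv_j+(1-\eta_i)w$. Then $0\le\widehat v_j^{(i)}\le1$, and $\widehat v_j^{(i)}=w$ on $\{\operatorname{dist}\le\rho\}$. This gives (i), with $L^1$ convergence because $|\widehat v_j^{(i)}-w|\le|v_j-w|$.

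\textbf{Step 2: estimate the energy.} Set $\widetilde\lambda_j:=\lambda_j/\alpha_j$ with $\alpha_j\uparrow1$, which gives (ii). Rather than applying Lemma~\ref{lem:pcutoff}(ii) crudely, I would redo its proof and split pairs $(x,y)$ according to where $x$ and $y$ lie.
\begin{itemize}
\item Pairs with $x,y\in\{\eta_i=1\}\subset A_\rho$ contribute at most $G_{\lambda_j,1,\gamma}(v_j;A_\rho)$.
\item Pairs with $x,y\in\{\eta_i=0\}\subset C_\rho$ contribute at most $G_{\lambda_j,1,\gamma}(w;C_\rho)$.
\item Pairs with one point in $\{\eta_i=1\}$ and the other in $\{\eta_i=0\}$ need care. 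Both points lie in $\{\operatorname{dist}<3\rho\}\cup A_\rho$. Write $\widehat v(x)-\widehat v(y)$ as a convex combination of $v_j(x)-v_j(y)$ and $w(x)-w(y)$, plus the term $B(x,y)$ from the proof of Lemma~\ref{lem:pcutoff}, splitting the threshold as $\alpha_j$ and $1-\alpha_j$. Pairs passing the $v_j$-threshold lie in $A_\rho\times A_\rho$ only if both points are in $A_\rho$. This is why I use the layered convexity trick with the point $x$ at which $\eta(x)$ is evaluated: the set $E_{A,1}$ carries the weight $\eta(x)$, so $x\in\{\eta>0\}\subset A_\rho$. I would also symmetrize so that $y$ lies in $A_\rho$ or is close. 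Pairs with $x\in A_\rho$ and $y\notin A_\rho$ have $|x-y|\ge\operatorname{dist}(x,\{\operatorname{dist}\le\rho\})$. Since $|v_j|\le1$, these satisfy $|x-y|^{1+\gamma}\le 1/\lambda_j$, and their total contribution is at most $\lambda_j\cdot$(measure of a $\lambda_j^{-1/(1+\gamma)}$-neighbourhood of the interface)$\cdot\int_{|h|\le\lambda_j^{-1/(1+\gamma)}}|h|^{\gamma-N}\,dh$. This is of order $\lambda_j\cdot\lambda_j^{-1/(1+\gamma)}\cdot\lambda_j^{-\gamma/(1+\gamma)}=O(1)$, not $o(1)$, and that is exactly why we average over the $M$ layers.
\end{itemize}

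\textbf{Step 3: average over layers.} The interaction error for layer $i$ is controlled by $\mathcal G_{\lambda_j,1,\gamma}(v_j;S_i',Q_0)+\mathcal G_{\lambda_j,1,\gamma}(w;S_i',Q_0)$ over a slightly enlarged layer $S_i'$, plus the Lipschitz term $\omega_N M\|v_j-w\|_{L^1}/(\gamma\rho(1-\alpha_j))$. The enlarged layers overlap boundedly, so
\[
\sum_i\Bigl[\mathcal G(v_j;S_i',Q_0)+\mathcal G(w;S_i',Q_0)\Bigr]\lesssim G_{\lambda_j,1,\gamma}(v_j;A_\rho)+G_{\lambda_j,1,\gamma}(w;C_\rho)+O(1),
\]
with the $O(1)$ coming from cross pairs as in Step 2. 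Choosing for each $j$ the best index $i(j)$ makes this error $\le C/M$. Now let $M=M_j\to\infty$ and $\alpha_j\to1$ slowly enough that $M_j\|v_j-w\|_{L^1}/(1-\alpha_j)\to0$. This yields \eqref{eq:repenergy}.

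\textbf{Main obstacle.} The hardest step is bounding the long-range cross-interaction pairs in Step 2 by $O(1)$ uniformly, so that the averaging in Step 3 kills them. This uses $0\le v_j,w\le1$: by the weak-type threshold, only pairs at distance $\lesssim\lambda_j^{-1/(1+\gamma)}$ can count.
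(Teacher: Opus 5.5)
Your outline can be completed, but its mechanism differs from the paper's, and in your version the layer averaging is not needed. The paper glues with sharp cutoffs ($v_{\ell,j}=v_j$ on $E_{\ell,j}$ and $v_{\ell,j}=w$ off it) at $K_j$ levels $t_{\ell,j}\in(3\rho/2,2\rho)$. It takes $\widetilde\lambda_j=(1+\eta_j)\lambda_j$ and splits each cross difference $|v_j(x)-w(y)|$ into $|v_j(x)-v_j(y)|$ and the mismatch $|v_j(y)-w(y)|$. For a sharp cutoff the mismatch carries no factor $|x-y|$. The natural bound for a single level is then about $\eta_j^{-\gamma/(\gamma+1)}\lambda_j^{1/(\gamma+1)}\int_{A_\rho\cap C_\rho}|v_j-w|^{\gamma/(\gamma+1)}\,dz$, which need not tend to $0$. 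Averaging over the levels, through $N_{j,1}(x,y)\le 1+2(K_j+1)|x-y|/\rho$, is what supplies the missing factor $|x-y|$. Your Lipschitz ramp supplies it directly: $|\eta(x)-\eta(y)|\le L_\eta|x-y|$ turns the mismatch into the $E_B$ term of Lemma~\ref{lem:pcutoff}. That term is bounded by $\omega_NL_\eta\|v_j-w\|_{L^1(Q_0)}/(\gamma(1-\alpha_j))$, independently of $\lambda_j$. Your opening diagnosis is therefore off: Lemma~\ref{lem:pcutoff}(ii) needs no regularity of $w$. Its only defect here is that its first two terms are not localized to $A_\rho$ and $C_\rho$.

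You already noticed the localization that fixes this. The set $E_{A,1}$ requires $\eta(x)>0$, the set $E_{A,2}$ requires $\eta(x)<1$, and every superlevel pair has $|x-y|\le\lambda_j^{-1/(\gamma+1)}$ because $0\le\widehat v_j\le1$. Take a single ramp $\eta$ rising from $0$ to $1$ on $\{3\rho/2<\operatorname{dist}(z,\partial Q_0)<2\rho\}$, so that $L_\eta=2/\rho$. Once $\lambda_j^{-1/(\gamma+1)}<\rho/2$, this gives
\begin{align*}
G_{\lambda_j/\alpha_j,1,\gamma}(\widehat v_j;Q_0)&\leq\frac{G_{\lambda_j,1,\gamma}(v_j;A_\rho)+G_{\lambda_j,1,\gamma}(w;C_\rho)}{\alpha_j}\\
&\quad+\frac{2\omega_N\|v_j-w\|_{L^1(Q_0)}}{\gamma\rho(1-\alpha_j)},
\end{align*}
since $E_{A,1}$-pairs then lie in $A_\rho\times A_\rho$ and $E_{A,2}$-pairs lie in $C_\rho\times C_\rho$. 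A pair inside the ramp may be charged to both terms; this costs nothing, because the estimate is just a union bound over $E_{A,1}\cup E_{A,2}\cup E_B$. Choosing $\alpha_j\to1$ with $\|v_j-w\|_{L^1(Q_0)}/(1-\alpha_j)\to0$ then proves the lemma, more briefly than the paper does.

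The ``main obstacle'' you name is self-inflicted. The $O(1)$ pairs with $x\in A_\rho$, $y\notin A_\rho$ exist only because your first layer abuts $\{\operatorname{dist}(z,\partial Q_0)=\rho\}$. Averaging is needed only because you bound the mixed class of pairs separately from the two pure classes. If you keep the averaged version, state precisely why it works. Those bad pairs occur in only about $1+M_j\lambda_j^{-1/(\gamma+1)}/\rho$ of the $M_j$ layers, so their total over all layers is $O(1+M_j\lambda_j^{-1/(\gamma+1)}/\rho)$ and their average tends to $0$. Averaging would not remove an error that is $O(1)$ in every layer, which is how your Step~2 phrasing reads.
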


\begin{proof}
If the right-hand side of \eqref{eq:repenergy} is infinite, the
conclusion is immediate by taking $\widetilde\lambda_j=\lambda_j$ and $\widehat v_j=w$.
Hence we only consider the case where the right-hand side is finite.
After discarding finitely many indices, we may assume that
\begin{align}\label{eq:repbound}
\sup_{j\in\mathbb N}G_{\lambda_j,1,\gamma}\left(v_j;A_\rho\right)<\infty .
\end{align}
For any  $j\in\mathbb{N}$, let
\begin{align*}
M_j:=\int_{A_\rho\cap C_\rho}|v_j-w|^{\frac{\gamma}{\gamma+1}}\,dz.
\end{align*}
Since \(v_j\to w\) in \(L^1(Q_0)\), it follows that $M_j\to\ 0.$
Choose a sequence $\{\eta_j\}_{j\in\mathbb N}\subset (0,\infty)$ such that
\begin{align}\label{eq:xuanze2}
\lim_{j\to\infty}\eta_j=0\ \ \text{and}\ \ \lim_{j\to\infty}\frac{\|v_j-w\|_{L^1(Q_0)}}{\eta_j}=0
\end{align}
and a sequence $\{K_j\}_{j\in\mathbb{N}}$ of integers such that
\begin{align}\label{eq:xuanze1}
\lim_{j\to\infty}K_j=\infty\ \ \text{and}\ \ \lim_{j\to\infty}\frac{
\eta_j^{-\frac{\gamma}{\gamma+1}}\lambda_j^{\frac1{\gamma+1}}M_j}{K_j}=0.
\end{align}
Moreover, for any $j\in\mathbb{N},$ let  $\widetilde{\lambda_j}:=(1+\eta_j)\lambda_j$.
Then $\lim_{j\to\infty}\frac{\widetilde\lambda_j}{\lambda_j}= 1.$
For any given $j\in\mathbb{N}$ and $\ell\in\{1,\ldots,K_j\}$, define
\begin{align*}
t_{\ell,j}:=\frac{3\rho}{2}+\frac{\ell\rho}{2(K_j+1)},\ \ E_{\ell,j}
:=\left\{z\in Q_0:\operatorname{dist}(z,\partial Q_0)>t_{\ell,j}\right\},
\end{align*}
and
\begin{align*}
v_{\ell,j}(z):=\begin{cases}
v_j(z),&z\in E_{\ell,j},\\
w(z),&z\in Q_0\setminus E_{\ell,j}.
\end{cases}
\end{align*}
Since $\frac{3\rho}{2}<t_{\ell,j}<2\rho$, it follows that $E_{\ell,j}\subset A_\rho$ and
$Q_0\setminus E_{\ell,j}\subset C_\rho$. This implies that
$v_{\ell,j}=w$ on $\{z\in Q_0:\ \operatorname{dist}(z,\partial Q_0)\leq\rho\}$ and
\begin{align}\label{eq:repconv}
\left\|v_{\ell,j}-w\right\|_{L^1(Q_0)}\leq\left\|v_j-w\right\|_{L^1(E_{\ell,j})}
\leq\left\|v_j-w\right\|_{L^1(Q_0)}.
\end{align}
Let
\begin{align*}
R_{\ell,j,1}:=\widetilde\lambda_j\int_{E_{\ell,j}}\int_{Q_0\setminus E_{\ell,j}}
\mathbf 1_{\left\{(x,y)\in E_{\ell,j}\times\left(Q_0\setminus E_{\ell,j}\right):\ x\neq y,\
\frac{|v_j(x)-w(y)|}{|x-y|^{\gamma+1}}\geq\widetilde\lambda_j\right\}}
|x-y|^{\gamma-N}\,dy\,dx
\end{align*}
and
\begin{align*}
R_{\ell,j,2}:=\widetilde\lambda_j\int_{Q_0\setminus E_{\ell,j}}\int_{E_{\ell,j}}
\mathbf 1_{\left\{(x,y)\in\left(Q_0\setminus E_{\ell,j}\right)\times E_{\ell,j}:\ x\neq y,\
\frac{|w(x)-v_j(y)|}{|x-y|^{\gamma+1}}\geq\widetilde\lambda_j\right\}}
|x-y|^{\gamma-N}\,dy\,dx.
\end{align*}
Then we  have
\begin{align}\label{eq:repdecomp}
G_{\widetilde\lambda_j,1,\gamma}(v_{\ell,j};Q_0)
\leq(1+\eta_j)G_{\lambda_j,1,\gamma}(v_j;A_\rho)+(1+\eta_j)G_{\lambda_j,1,\gamma}(w;C_\rho)
+R_{\ell,j,1}+R_{\ell,j,2}.
\end{align}
Next, we show
\begin{align}\label{eq:nangao}
\lim_{j\to\infty}\frac{1}{K_j}\sum_{\ell=1}^{K_{j}}\left(R_{\ell,j,1}+R_{\ell,j,2}\right)=0.
\end{align}
For any $j\in\mathbb{N}$ and  $\ell\in\{1,\ldots,K_j\}$, define
\begin{align*}
\Gamma_{\ell,j,1}&:=\left\{(x,y)\in E_{\ell,j}\times
\left(Q_0\setminus E_{\ell,j}\right):\ x\neq y,\
\frac{|v_j(x)-w(y)|}{|x-y|^{\gamma+1}}\geq\widetilde\lambda_j\right\}.
\end{align*}
Since $\lambda_j\to\infty$, after discarding finitely many indices, we
may assume that, for any $j\in\mathbb{N},$ $ \lambda_j^{-\frac{1}{\gamma+1}}
<\frac{\rho}{4}.$ From this  and  the facts that  $0\leq v_j,w\leq1$  and
$\widetilde\lambda_j\geq\lambda_j$, we deduce that
\begin{align}\label{eq:naoziteng}
\bigcup_{\ell=1}^{K_j}\Gamma_{\ell,j,1}&\subset\left\{(x,y)\in Q_0\times Q_0:\ x\neq y,\
\frac{1}{|x-y|^{\gamma+1}}\geq\widetilde\lambda_j\right\}\notag\\
&\subset\left\{(x,y)\in Q_0\times Q_0:|x-y|\leq\frac{\rho}{4}\right\}.
\end{align}
We claim that
\begin{align}\label{eq:repregion}
\bigcup_{\ell=1}^{K_j}\Gamma_{\ell,j,1}\subset\left(A_\rho\cap C_\rho\right)
\times\left(A_\rho\cap C_\rho\right).
\end{align}
Indeed, let $(x,y)\in\Gamma_{\ell,j,1}$ for some $\ell\in\{1,\ldots,K_j\}$. Then
\begin{align*}
\operatorname{dist}(x,\partial Q_0)>t_{\ell,j}\geq\operatorname{dist}(y,\partial Q_0).
\end{align*}
Using this, \eqref{eq:naoziteng}, and the fact that $\frac{3\rho}{2} <t_{\ell,j}
<2\rho,$ we conclude that
\begin{align*}
\rho<t_{\ell,j}<\operatorname{dist}(x,\partial Q_0)\leq\operatorname{dist}(y,\partial Q_0)+|x-y|
\leq t_{\ell,j}+\frac{\rho}{4}<\frac{9\rho}{4}<3\rho
\end{align*}
and
\begin{align*}
\rho<t_{\ell,j}-\frac{\rho}{4}<\operatorname{dist}(x,\partial Q_0)-|x-y|
\leq\operatorname{dist}(y,\partial Q_0)\leq t_{\ell,j}<3\rho.
\end{align*}
Hence $(x,y)\in(A_\rho\cap C_\rho)\times(A_\rho\cap C_\rho)$, which proves \eqref{eq:repregion}.
For any $(x,y)\in Q_0\times Q_0$,  let
\begin{align*}
N_{j,1}(x,y):=\sum_{\ell=1}^{K_j} \mathbf 1_{\{x\in E_{\ell,j},\,y\in Q_0\setminus E_{\ell,j}\}}(x,y).
\end{align*}
Then, by \eqref{eq:repregion}, we find that
\begin{align*}
\frac{1}{K_j}\sum_{\ell=1}^{K_j}R_{\ell,j,1}&=\frac{\widetilde\lambda_j}{K_j}
\int_{A_\rho\cap C_\rho}\int_{A_\rho\cap C_\rho}N_{j,1}(x,y)
\mathbf 1_{\left\{x,y\in A_\rho\cap C_\rho:\ x\neq y,\
\frac{|v_j(x)-w(y)|}{|x-y|^{\gamma+1}}\geq\widetilde\lambda_j\right\}}
|x-y|^{\gamma-N}\,dy\,dx,
\end{align*}
which further implies that
\begin{align}\label{eq:repij}
\frac{1}{K_j}\sum_{\ell=1}^{K_j}R_{\ell,j,1}\leq I_{j,1}+I_{j,2},
\end{align}
where
\begin{align*}
I_{j,1}:=\frac{\widetilde\lambda_j}{K_j}\int_{A_\rho\cap C_\rho}\int_{A_\rho\cap C_\rho}
N_{j,1}(x,y)\mathbf 1_{\left\{x,y\in A_\rho\cap C_\rho:\ x\neq y,\
\frac{|v_j(x)-v_j(y)|}{|x-y|^{\gamma+1}}\geq\lambda_j\right\}}
|x-y|^{\gamma-N}\,dy\,dx
\end{align*}
and
\begin{align*}
I_{j,2}:=\frac{\widetilde\lambda_j}{K_j}\int_{A_\rho\cap C_\rho}\int_{A_\rho\cap C_\rho}
N_{j,1}(x,y)\mathbf 1_{\left\{x,y\in A_\rho\cap C_\rho:\ x\neq y,\
\frac{|v_j(y)-w(y)|}{|x-y|^{\gamma+1}}\geq\eta_j\lambda_j\right\}}
|x-y|^{\gamma-N}\,dx\,dy.
\end{align*}
Observe that, for any  $x,y\in Q_0$,
\begin{align*}
&\left\{\ell\in\{1,\ldots,K_j\}:x\in E_{\ell,j},\
y\in Q_0\setminus E_{\ell,j}\right\}\\
&\quad=\left\{\ell\in\{1,\ldots,K_j\}:
\operatorname{dist}(y,\partial Q_0)\leq t_{\ell,j}
<\operatorname{dist}(x,\partial Q_0)\right\}.
\end{align*}
This, combined with the fact that  $ t_{\ell+1,j}-t_{\ell,j}=\frac{\rho}{2(K_j+1)}$,
implies that, for any $x,y\in Q_0,$
\begin{align}\label{eq:repcount}
N_{j,1}(x,y)&\leq1+\frac{\left(\operatorname{dist}(x,\partial Q_0)-
\operatorname{dist}(y,\partial Q_0)\right)_+}{\frac{\rho}{2(K_j+1)}}\leq
1+\frac{2(K_j+1)|x-y|}{\rho}.
\end{align}
Since $0\leq v_j\leq 1$, it follows that, for any $x,y\in Q_0$
satisfying $|v_j(x)-v_j(y)|\geq \lambda_{j}|x-y|^{\gamma+1},$ $|x-y|\leq \lambda_j^{-\frac{1}{\gamma+1}}.$
Using this and \eqref{eq:repcount}, we conclude that
\begin{align*}
I_{j,1}&=\frac{(1+\eta_j)\lambda_j}{K_j}\int_{A_\rho\cap C_\rho}\int_{A_\rho\cap C_\rho}
N_{j,1}(x,y)\mathbf 1_{\left\{x,y\in A_\rho\cap C_\rho:\ x\neq y,\
\frac{|v_j(x)-v_j(y)|}{|x-y|^{\gamma+1}}\geq\lambda_j\right\}}
|x-y|^{\gamma-N}\,dy\,dx\\
&\lesssim(1+\eta_j)\lambda_j\int_{A_\rho\cap C_\rho}
\int_{A_\rho\cap C_\rho}\left(\frac{1}{K_j}+\frac{|x-y|}{\rho}\right)\mathbf 1_{\left\{
x,y\in A_\rho\cap C_\rho:\ x\neq y,\
\frac{|v_j(x)-v_j(y)|}{|x-y|^{\gamma+1}}\geq\lambda_j\right\}}
|x-y|^{\gamma-N}\,dy\,dx\\
&\leq(1+\eta_j)\lambda_j\int_{A_\rho\cap C_\rho}
\int_{A_\rho\cap C_\rho}\left(\frac{1}{K_j}+\frac{\lambda_j^{-\frac{1}{\gamma+1}}}{\rho}\right)\mathbf 1_{\left\{
x,y\in A_\rho\cap C_\rho:\ x\neq y,\
\frac{|v_j(x)-v_j(y)|}{|x-y|^{\gamma+1}}\geq\lambda_j\right\}}
|x-y|^{\gamma-N}\,dy\,dx\\
&\leq(1+\eta_j)\left(\frac{1}{K_j}+\frac{\lambda_j^{-\frac{1}{\gamma+1}}}{\rho}\right)
G_{\lambda_j,1,\gamma}\left(v_j;A_\rho\right).
\end{align*}
Letting $j\to\infty$ and using \eqref{eq:repbound}, we have
\begin{align}\label{eq:repizero}
\lim_{j\to\infty}I_{j,1}=0.
\end{align}
Similarly, we also have
\begin{align*}
I_{j,2}\lesssim(1+\eta_j)\frac{\eta_j^{-\frac{\gamma}{\gamma+1}}\lambda_j^{\frac{1}{\gamma+1}}M_j
}{K_j}+(1+\eta_j)\frac{\|v_j-w\|_{L^1(Q_0)}}{\rho\eta_j}.
\end{align*}
Letting $j\to\infty$ and using \eqref{eq:xuanze1} and \eqref{eq:xuanze2}, we conclude that
$\lim_{j\to\infty}I_{j,2}=0.$ This, together with \eqref{eq:repij} and  \eqref{eq:repizero}, implies  that
\begin{align*}
\lim_{j\to\infty}\frac{1}{K_j}\sum_{\ell=1}^{K_j}R_{\ell,j,1}=0.
\end{align*}
Similarly, we also have
\begin{align*}
\lim_{j\to\infty}\frac{1}{K_j}\sum_{\ell=1}^{K_j}R_{\ell,j,2}=0
\end{align*}
and hence \eqref{eq:nangao} holds.

For any given  $j\in\mathbb{N}$, choose $\ell(j)\in\{1,\ldots,K_j\}$ such that
\begin{align*}
R_{\ell(j),j,1}+R_{\ell(j),j,2}\leq\frac{1}{K_j}\sum_{\ell=1}^{K_j}\left(
R_{\ell,j,1}+R_{\ell,j,2}\right),
\end{align*}
and define $\widehat v_j:=v_{\ell(j),j}.$ From \eqref{eq:nangao}, we infer that
\begin{align*}
\lim_{j\to\infty}\left(R_{\ell(j),j,1}+R_{\ell(j),j,2}\right)=0,
\end{align*}
which, together with \eqref{eq:repdecomp}, implies that \eqref{eq:repenergy} holds.
Moreover, using \eqref{eq:repconv}, we have
\begin{align*}
\lim_{j\to\infty}\left\|\widehat v_j-w\right\|_{L^1(Q_0)}=0.
\end{align*}
The other assertions of the present lemma holds  naturally
by the construction of $v_{\ell,j}$. This completes  the proof of Lemma \ref{lem:replace}.
\end{proof}

The following lemma is just  \cite[Proposition~2.3(i)]{bsvy24}.

\begin{lemma}\label{lem:segment}
Let $f\in C_{\rm c}(\mathbb R^N)$, $\gamma\in(0,\infty)$, and
\begin{align*}
\mathcal E(f,\gamma):=\left\{(x,y)\in\mathbb R^N\times\mathbb R^N:\ x\neq y,\
\int_{[x,y]}|f(s)|\,ds>|x-y|^{\gamma+1}\right\},
\end{align*}
where $[x,y]$ denotes the closed segment joining $x$ and $y$. Then there exists a positive constant
$C_{N,\gamma}$, depending only on $N$ and $\gamma$, such that
\begin{align*}
\iint_{\mathcal E(f,\gamma)}|x-y|^{\gamma-N}\,dx\,dy\leq C_{N,\gamma}\|f\|_{L^1(\mathbb R^N)}.
\end{align*}
\end{lemma}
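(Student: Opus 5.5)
The plan is to reduce Lemma~\ref{lem:segment} to a one-dimensional estimate along lines, using polar coordinates in $h:=y-x$ and Fubini, and then to prove the one-dimensional estimate with a Vitali-type covering argument.

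\emph{Reduction to one dimension.} Write $y=x+t\omega$ with $t>0$ and $\omega\in\mathbb S^{N-1}$, so that $dy=t^{N-1}\,dt\,d\mathcal H^{N-1}(\omega)$ and $|x-y|^{\gamma-N}\,dy=t^{\gamma-1}\,dt\,d\mathcal H^{N-1}(\omega)$. Then
\begin{align*}
\iint_{\mathcal E(f,\gamma)}|x-y|^{\gamma-N}\,dx\,dy
=\int_{\mathbb S^{N-1}}\int_{\mathbb R^N}\int_0^\infty
\mathbf 1_{\left\{\int_0^t|f(x+s\omega)|\,ds>t^{\gamma+1}\right\}}\,t^{\gamma-1}\,dt\,dx\,d\mathcal H^{N-1}(\omega).
\end{align*}
For fixed $\omega$, decompose $x=x'+a\omega$ with $x'\in\omega^\perp$ and $a\in\mathbb R$, and set $g(\sigma):=|f(x'+\sigma\omega)|$. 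The inner double integral in $(a,t)$ becomes
\begin{align*}
\Phi(g):=\int_{\mathbb R}\int_0^\infty\mathbf 1_{\left\{\int_a^{a+t}g>t^{\gamma+1}\right\}}\,t^{\gamma-1}\,dt\,da.
\end{align*}
It therefore suffices to prove the one-dimensional bound
\begin{align*}
\Phi(g)\le C_\gamma\|g\|_{L^1(\mathbb R)}\quad\text{for every nonnegative }g\in C_{\rm c}(\mathbb R).
\end{align*}
Integrating this over $x'\in\omega^\perp$ (Fubini) and then over $\omega\in\mathbb S^{N-1}$ gives the lemma with $C_{N,\gamma}=\mathcal H^{N-1}(\mathbb S^{N-1})\,C_\gamma$.

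\emph{The one-dimensional bound.} Let $\mathcal F$ be the family of all intervals $I=[a,a+t]$ with $\int_I g>|I|^{\gamma+1}$. For such $I$,
\begin{align*}
|I|^{\gamma+1}<\|g\|_{L^1(\mathbb R)},
\end{align*}
so the lengths in $\mathcal F$ are uniformly bounded. For each $a$, put $T(a):=\sup\{t:[a,a+t]\in\mathcal F\}$, with $T(a):=0$ if no such $t$ exists. The inner $t$-integral is then at most $\int_0^{T(a)}t^{\gamma-1}\,dt=T(a)^\gamma/\gamma$. By the layer-cake formula,
\begin{align*}
\Phi(g)\le\frac1\gamma\int_0^\infty\left|\left\{a:T(a)^\gamma>\lambda\right\}\right|\,d\lambda.
\end{align*}

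\emph{Covering step.} Next, select a single disjoint subfamily $\{J_i\}\subset\mathcal F$ independent of $\lambda$. At each stage pick an interval whose length is at least half the supremum of the lengths of the intervals in $\mathcal F$ disjoint from those already chosen. This gives the usual Vitali property: every $I\in\mathcal F$ meets some $J_i$ with $|J_i|\ge|I|/2$, hence $I\subset 5J_i$. If $T(a)^\gamma>\lambda$, then $a$ lies in some $I\in\mathcal F$ with $|I|^\gamma>\lambda$. Consequently $a\in5J_i$ for some $i$ with $|J_i|^\gamma>2^{-\gamma}\lambda$, and
\begin{align*}
\left|\left\{a:T(a)^\gamma>\lambda\right\}\right|\le5\sum_{i:\ 2^\gamma|J_i|^\gamma>\lambda}|J_i|.
\end{align*}
Integrating in $\lambda$ and using that the $J_i$ are disjoint members of $\mathcal F$ gives
\begin{align*}
\Phi(g)\le\frac{5\cdot2^\gamma}{\gamma}\sum_i|J_i|^{\gamma+1}
\le\frac{5\cdot2^\gamma}{\gamma}\sum_i\int_{J_i}g
\le\frac{5\cdot2^\gamma}{\gamma}\|g\|_{L^1(\mathbb R)}.
\end{align*}

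\emph{Expected obstacle.} The main difficulty is the covering step. A level-by-level weak-type maximal bound would only give $\left|\left\{a:T(a)^\gamma>\lambda\right\}\right|\lesssim\lambda^{-1}\|g\|_{L^1}$, whose $\lambda$-integral diverges logarithmically. The argument must instead exploit the superlinear mass condition $\int_I g>|I|^{\gamma+1}$ through one fixed disjoint family used for all $\lambda$ simultaneously. The points to check carefully are that the greedy selection is well defined, which uses the uniform length bound from $|I|^{\gamma+1}<\|g\|_{L^1}$, and the measurability of $T$ and of the sets involved, which uses the continuity of $f$.
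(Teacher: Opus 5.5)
Your argument is correct, and it differs from the paper simply because the paper gives no proof of this lemma: it quotes the result from \cite[Proposition~2.3(i)]{bsvy24}, whereas you give a self-contained one. The slicing step is exact by Tonelli: polar coordinates in $y-x$, then lines parallel to $\omega$, with each slice $g$ again continuous and compactly supported. The one-dimensional step works because you use a single disjoint Vitali family $\{J_i\}$ for all levels $\lambda$ at once. The $\lambda$-integration then produces $\sum_i|J_i|^{\gamma+1}\le\sum_i\int_{J_i}g$ instead of the logarithmically divergent weak-type bound. Your route yields the explicit constant $C_{N,\gamma}=5\cdot2^{\gamma}\mathcal H^{N-1}(\mathbb S^{N-1})/\gamma$, and continuity of $f$ is needed only for measurability ($T$ is lower semicontinuous, since $(a,t)\mapsto\int_a^{a+t}g-t^{\gamma+1}$ is continuous); the citation only buys brevity.

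One line should be added to the covering step. The sequential greedy selection has the Vitali exhaustion property only if the chosen intervals shrink when infinitely many are chosen; for the family $\{[2n,2n+1]\}_{n\in\mathbb Z}$ it can miss members forever. Here this holds automatically. The $J_k$ are disjoint members of $\mathcal F$, so $\sum_k|J_k|^{\gamma+1}\le\|g\|_{L^1(\mathbb R)}$, which forces $|J_k|\to0$. An $I\in\mathcal F$ disjoint from every $J_k$ would force $|J_k|>|I|/2$ for all $k$, a contradiction. So some $J_k$ meets $I$, the first such one satisfies $|J_k|>|I|/2$, and hence $I\subset5J_k$.
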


For any  measurable set $E\subset\mathbb R^N$ and $r\in (0,\infty)$, let
\begin{align*}
 E^{(r)}:=\{x\in\mathbb R^N:\operatorname{dist}(x,E)<r\}.
\end{align*}

\begin{lemma}\label{lem:localbv}
Let $N\in\mathbb N$, $\gamma\in(0,\infty)$, $v\in BV(\mathbb R^N)$, and $E\subset\mathbb R^N$ be a bounded Borel set.  Then
there exists a  positive constant $C_{N,\gamma}$, depending only on $N$ and $\gamma$, such that, for any $r,\lambda\in (0,\infty)$,
\begin{align}\label{eq:localbv}
\lambda\int_E\int_{B(x,r)}\mathbf 1_{\left\{x\in E,\ y\in B(x,r):\ x\neq y,\
\frac{|v(x)-v(y)|}{|x-y|^{\gamma+1}}\geq\lambda\right\}}
|x-y|^{\gamma-N}\,dy\,dx
\leq C_{N,\gamma}|Dv|(E^{(3r)}).
\end{align}
\end{lemma}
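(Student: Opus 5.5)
We plan to reduce Lemma \ref{lem:localbv} to Lemma \ref{lem:segment} by approximation and the fundamental theorem of calculus along segments. First assume $v\in C^\infty(\mathbb R^N)\cap BV(\mathbb R^N)$; the general case should follow by mollification, as explained at the end.

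\textbf{Smooth case.} Take a cutoff $\chi\in C_{\rm c}(\mathbb R^N)$ with $0\le\chi\le1$, $\chi=1$ on $E^{(2r)}$, and $\operatorname{supp}\chi\subset E^{(3r)}$, and set $f:=\chi|\nabla v|\in C_{\rm c}(\mathbb R^N)$. If $x\in E$ and $y\in B(x,r)$, then the whole segment $[x,y]$ lies in $E^{(r)}\subset E^{(2r)}$, so
\begin{align*}
|v(x)-v(y)|\le\int_{[x,y]}|\nabla v(s)|\,ds=\int_{[x,y]}f(s)\,ds .
\end{align*}
Hence the condition $|v(x)-v(y)|\ge\lambda|x-y|^{\gamma+1}$ gives $\int_{[x,y]}\lambda^{-1}f\,ds\ge|x-y|^{\gamma+1}$.

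Lemma \ref{lem:segment} uses a strict inequality, so we apply it to $(1+\delta)\lambda^{-1}f$ for a small $\delta>0$. This yields
\begin{align*}
\lambda\iint_{\{\cdots\}}|x-y|^{\gamma-N}\,dy\,dx\le C_{N,\gamma}(1+\delta)\|f\|_{L^1}\le C_{N,\gamma}(1+\delta)\int_{E^{(3r)}}|\nabla v|\,dx .
\end{align*}
Letting $\delta\to0$ proves \eqref{eq:localbv} for smooth $v$.

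\textbf{General $v\in BV(\mathbb R^N)$.} Let $v_\varepsilon:=v*\varphi_\varepsilon$. Then $v_\varepsilon\to v$ a.e., and for $\varepsilon<r'$ we have $\int_{A}|\nabla v_\varepsilon|\le|Dv|(A^{(\varepsilon)})$. The main obstacle is passing to the limit in the superlevel indicator and the enlarged neighbourhoods.
\begin{itemize}
\item[$\bullet$] \emph{Indicator.} We will use the lower semicontinuity of the strict-inequality indicator under a.e.\ convergence, applied with threshold $(1-\delta)\lambda$ so that the non-strict set is covered. Fatou's lemma then bounds the left side by $(1-\delta)^{-1}\liminf_\varepsilon$ of the corresponding quantity for $v_\varepsilon$.
\item[$\bullet$] \emph{Neighbourhoods.} To absorb the enlargement, run the smooth argument with a slightly smaller cutoff: $\chi=1$ on $E^{(r+\varepsilon')}$ and $\operatorname{supp}\chi\subset E^{(3r-\varepsilon')}$. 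Then $\int\chi|\nabla v_\varepsilon|\le|Dv|(E^{(3r)})$ once $\varepsilon<\varepsilon'$.
\end{itemize}
Finally let $\delta\to0$. The constant obtained is exactly the one from Lemma \ref{lem:segment}, which depends only on $N$ and $\gamma$.
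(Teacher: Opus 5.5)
Your proposal is correct and is essentially the paper's proof: the smooth case goes through a cutoff, the segment bound $|v(x)-v(y)|\le\int_{[x,y]}\chi|\nabla v|$, and Lemma~\ref{lem:segment} with a slack factor to get the strict inequality. The general case then uses mollification, Fatou's lemma with threshold $(1-\delta)\lambda$, and $\int_A|\nabla v_\varepsilon|\le|Dv|(A^{(\varepsilon)})$. The only difference is bookkeeping. The paper takes $\operatorname{supp}\chi\subset E^{(2r)}$ and uses the mollification radius $\delta_k<r$ to pass from $\int_{E^{(2r)}}|\nabla v_k|$ to $|Dv|(E^{(3r)})$, so your shrunken cutoff between $E^{(r+\varepsilon')}$ and $E^{(3r-\varepsilon')}$ is unnecessary, though it is also valid.
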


\begin{proof}
Assume first that $v\in C^\infty(\mathbb R^N)$. Fix $\varepsilon\in(0,1)$.
Choose $\chi\in C_{\rm c}^\infty(E^{(2r)})$ such that $0\leq\chi\leq1$ and $\chi\equiv 1$ on $E^{(r)}$.
Let $x\in E$ and $y\in B(x,r)$. Then the whole segment $[x,y]$ is contained in $E^{(r)}$.
This implies that  $\chi=1$ on $[x,y]$ and hence
\begin{align*}
|v(x)-v(y)|\leq\int_{[x,y]}|\nabla v(s)|\,ds=\int_{[x,y]}\chi(s)|\nabla v(s)|\,ds.
\end{align*}
Thus, if $|v(x)-v(y)|\geq\lambda|x-y|^{\gamma+1},$ then
\begin{align*}
\int_{[x,y]}\frac{\chi(s)|\nabla v(s)|}{(1-\varepsilon)\lambda}\,ds> |x-y|^{\gamma+1}.
\end{align*}
Using this and applying Lemma~\ref{lem:segment} with
$f:=\frac{\chi|\nabla v|}{(1-\varepsilon)\lambda},$ we conclude  that
\begin{align*}
&\int_E\int_{B(x,r)}\mathbf 1_{\left\{x\in E,\ y\in B(x,r):\ x\neq y,\
\frac{|v(x)-v(y)|}{|x-y|^{\gamma+1}}\geq\lambda\right\}}
|x-y|^{\gamma-N}\,dy\,dx\\
&\quad\leq\iint_{\mathcal E(f,\gamma)}|x-y|^{\gamma-N}\,dx\,dy\lesssim\frac{1}{(1-\varepsilon)\lambda}
\int_{\mathbb R^N}\chi|\nabla v|\,dx\leq\frac{1}{(1-\varepsilon)\lambda}
\int_{E^{(2r)}}|\nabla v|\,dx.
\end{align*}
Letting $\varepsilon \to0$ in the above inequality, we then have
\begin{align}\label{eq:lbvsmooth}
\lambda\int_E\int_{B(x,r)}\mathbf 1_{\left\{x\in E,\ y\in B(x,r):\ x\neq y,\
\frac{|v(x)-v(y)|}{|x-y|^{\gamma+1}}\geq\lambda\right\}}
|x-y|^{\gamma-N}\,dy\,dx
\lesssim\int_{E^{(2r)}}|\nabla v|\,dx.
\end{align}
Now, let $v\in BV(\mathbb R^N)$. Choose a standard mollifier $\rho\in C_{\rm c}^\infty(B({\bf0},1))$ such that
$\rho\geq0$ and $\int_{\mathbb R^N}\rho\,dx=1.$ For any $\delta\in (0,\infty)$ and $z\in\mathbb{R}^N,$ let
$\rho_\delta(z):=\delta^{-N}\rho(\frac{z}{\delta}).$
Choose a sequence $\delta_k\downarrow0$ with $\delta_k<r$, and
let $v_k:=v*\rho_{\delta_k}.$ After passing to a subsequence, we may assume that, for almost every $x\in\mathbb{R}^N, $
$\lim_{k\to\infty}v_k(x)=v(x).$ Fix $\varepsilon\in(0,1)$. By \eqref{eq:lbvsmooth} and the Fatou lemma, we find that
\begin{align}\label{eq:lbvfatou}
&\lambda\int_E\int_{B(x,r)}\mathbf 1_{\left\{x\in E,\ y\in B(x,r):\ x\neq y,\
\frac{|v(x)-v(y)|}{|x-y|^{\gamma+1}}\geq\lambda\right\}}
|x-y|^{\gamma-N}\,dy\,dx\notag\\
&\quad\leq\lambda\int_E\int_{B(x,r)}\liminf_{k\to\infty}\mathbf 1_{\left\{
x\in E,\ y\in B(x,r):\ x\neq y,\
\frac{|v_k(x)-v_k(y)|}{|x-y|^{\gamma+1}}\geq(1-\varepsilon)\lambda\right\}}
|x-y|^{\gamma-N}\,dy\,dx \notag\\
&\quad\lesssim\frac{1}{1-\varepsilon}\liminf_{k\to\infty}\int_{E^{(2r)}}|\nabla v_k|\,dx.
\end{align}
On the other hand, from the fact that $ \nabla v_k=\rho_{\delta_k}*Dv$ and Fubini's theorem,
we deduce that, for any $k\in\mathbb{N},$
\begin{align*}
\int_{E^{(2r)}}|\nabla v_k(x)|\,dx&\leq\int_{\mathbb R^N}\left[
\int_{E^{(2r)}}\rho_{\delta_k}(x-y)\,dx\right]d|Dv|(y)\\
&=\int_{\mathbb R^N}\rho_{\delta_k}(z)\left[\int_{\mathbb R^N}
\mathbf 1_{E^{(2r)}}(y+z)\,d|Dv|(y)\right]dz\\
&\leq\int_{\mathbb R^N}\rho_{\delta_k}(z)|Dv|(E^{(3r)})dz=|Dv|(E^{(3r)}).
\end{align*}
By this and  \eqref{eq:lbvfatou}, we conclude that \eqref{eq:localbv} holds. This completes
the proof of  Lemma \ref{lem:localbv}.
\end{proof}

\begin{lemma}\label{lem:collar}
Let $\gamma\in(0,\infty)$ and $q\in BV(I_0)$ be nondecreasing. Assume that  $q(-\frac{1}{2}+)=0$
and $q(\frac{1}{2}-)=1$, where $q(-\frac{1}{2}+):=
\lim_{t\to -\frac{1}{2}^+}q(t)$ and $q(\frac{1}{2}-):=\lim_{t\to\frac{1}{2}^-}q(t).$
For any $\rho\in(0,\infty)$, let $B_\rho:=\{z\in Q_0:\operatorname{dist}(z,\partial Q_0)<\rho\}$.
Then
\begin{align}\label{eq:collar}
\lim_{\rho\to 0^+}\limsup_{\lambda\to\infty}
\lambda\int_{B_\rho}\int_{Q_0}
\mathbf 1_{\left\{(x,y)\in B_\rho\times Q_0:\ x\neq y,\
\frac{|W_q(x)-W_q(y)|}{|x-y|^{\gamma+1}}\geq\lambda\right\}}
|x-y|^{\gamma-N}\,dy\,dx=0.
\end{align}
\end{lemma}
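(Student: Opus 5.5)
The plan is to extend $W_q$ to a $BV$ function on $\mathbb R^N$ and apply Lemma~\ref{lem:localbv} with a radius tied to $\lambda$. The point is that the integrand vanishes unless $|x-y|$ is tiny. Since $0\le q\le1$, we have $0\le W_q\le1$, so $|W_q(x)-W_q(y)|\ge\lambda|x-y|^{\gamma+1}$ forces $|x-y|\le\lambda^{-1/(\gamma+1)}=:r_\lambda$. Hence the inner integral over $Q_0$ may be replaced by an integral over $Q_0\cap B(x,r_\lambda)$.

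Next, extend $q$ to $\widetilde q:\mathbb R\to[0,1]$ by $0$ on $(-\infty,-\frac12]$ and $1$ on $[\frac12,\infty)$. Thanks to $q(-\frac12+)=0$ and $q(\frac12-)=1$, $\widetilde q$ is nondecreasing with $|D\widetilde q|(\mathbb R)=1$ and no atoms at $\pm\frac12$. Set $v(z):=\widetilde q(z_N)\varphi(z)$, where $\varphi\in C^\infty_{\rm c}(\mathbb R^N)$ satisfies $0\le\varphi\le1$ and $\varphi\equiv1$ on $2Q_0$. Then $v\in BV(\mathbb R^N)$ and $v=W_q$ on $Q_0$, so for $x\in B_\rho$ and $y\in Q_0$ the differences agree with those of $v$. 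Lemma~\ref{lem:localbv} with $E:=B_\rho$ and $r:=r_\lambda$ then bounds the quantity in \eqref{eq:collar}, for each fixed $\rho$ and $\lambda$, by
\[
C_{N,\gamma}\,|Dv|\bigl(B_\rho^{(3r_\lambda)}\bigr).
\]
Since $r_\lambda\to0$ and the sets $B_\rho^{(3r_\lambda)}$ decrease to $\overline{B_\rho}$, we get
\[
\limsup_{\lambda\to\infty}\le C_{N,\gamma}|Dv|(\overline{B_\rho}).
\]

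Finally, let $\rho\to0^+$. The sets $\overline{B_\rho}$ decrease to $\partial Q_0$, so the bound tends to $C_{N,\gamma}|Dv|(\partial Q_0)$, and it remains to show this vanishes; I expect this to be the main point. Near $Q_0$ we have $\varphi\equiv1$, so there $Dv=e_N\,(\mathcal L^{N-1}\otimes D\widetilde q)$ in the product structure $\mathbb R^{N-1}\times\mathbb R$. We split $\partial Q_0$ into two kinds of faces:
\begin{itemize}
\item The lateral faces $\{|z_i|=\frac12\}$, $i<N$, are $\mathcal L^{N-1}$-null in the first factor, so they carry no mass of the product measure.
\item The top and bottom faces $\{z_N=\pm\frac12\}$ have $|Dv|$-measure at most $|D\widetilde q|(\{\pm\frac12\})=0$, because $\widetilde q$ is continuous at $\pm\frac12$ by the one-sided limit hypotheses.
\end{itemize}
Hence $|Dv|(\partial Q_0)=0$, and \eqref{eq:collar} follows. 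The only delicate ingredient is this last use of the hypotheses $q(-\frac12+)=0$ and $q(\frac12-)=1$, which rule out jump mass of the extension on the horizontal faces.
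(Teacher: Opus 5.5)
Your proposal is correct and follows the paper's proof essentially step by step. You extend $q$ by $0$ and $1$ outside $I_0$, multiply by a smooth cutoff equal to $1$ near $\overline{Q_0}$, apply Lemma~\ref{lem:localbv} with $r=\lambda^{-1/(\gamma+1)}$, and conclude from $|Dv|(\partial Q_0)=0$, which follows from the product structure $\mathcal L^{N-1}\otimes D\widetilde q$ and the absence of atoms of $D\widetilde q$ at $\pm\frac12$. The only differences are cosmetic: the paper uses a product cutoff $\zeta(z')\theta(z_N)$ instead of a single $\varphi$, and you spell out the monotone-limit steps ($B_\rho^{(3r_\lambda)}\downarrow\overline{B_\rho}$, then $\overline{B_\rho}\downarrow\partial Q_0$) that the paper leaves implicit.
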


\begin{proof}
Because $q$ is  nondecreasing, $q(-\frac{1}{2}+)=0$ and $q(\frac{1}{2}-)=1$,
we may assume, after changing $q$ on a countable set if necessary, that
$0\leq q\leq 1$ on $I_0.$ Extend $q$ constantly outside $I_0$ by setting, for any $t\in\mathbb R$,
\begin{align*}
\widetilde q(t):=\begin{cases}
0,&t\in(-\infty,-\frac{1}{2}],\\
q(t),&t\in(-\frac{1}{2},\frac{1}{2}),\\
1,&t\in[\frac{1}{2},\infty).
\end{cases}
\end{align*}
Then $\widetilde{q}$ is continuous at $-\frac{1}{2}$ and $\frac{1}{2}$, which implies that
\begin{align}\label{eq:colend}
D\widetilde q\left(\left\{-\frac{1}{2}\right\}\right)
=D\widetilde q\left(\left\{\frac{1}{2}\right\}\right)=0.
\end{align}
Choose $\theta\in C_{\rm c}^\infty(\mathbb R)$ such that $\theta\equiv 1$   on a
neighborhood of $[-\frac{1}{2},\frac{1}{2}].$ If $N:=1$, let $\zeta\equiv1$ and, if $N\ge2$,
choose $\zeta\in C_{\rm c}^\infty(\mathbb R^{N-1})$ such that $\zeta\equiv 1$   on a neighborhood
of $[-\frac{1}{2},\frac{1}{2}]^{N-1}.$ For any $z:=(z',z_N)\in\mathbb{R}^N,$ define
\begin{align*}
\widetilde W_q(z',z_N):=\zeta(z')\theta(z_N)\widetilde q(z_N).
\end{align*}
Then $\widetilde W_q\in BV(\mathbb R^N)$.
Moreover, on a neighborhood $V$ of $\overline{Q}_0$, we find that, for any $(z',z_N)\in V$,
$\widetilde W_q(z',z_N)=\widetilde q(z_N).$
This further implies that, for any $i\in\{1,\ldots, N-1\}$ and
$\varphi\in C_{\rm c}^\infty(V)$,
\begin{align*}
\left\langle D_i\widetilde W_q,\varphi\right\rangle&=-\int_V\widetilde W_q(z)
\frac{\partial\varphi}{\partial z_i}(z)\,dz
=-\int_V \widetilde{q}(z_N)\frac{\partial\varphi}{\partial z_i}(z)\,dz=0
\end{align*}
and
\begin{align*}
\left\langle D_N\widetilde W_q,\varphi\right\rangle=-\int_V\widetilde W_q(z',z_N)
\frac{\partial\varphi}{\partial z_N}(z',z_N)\,dz'\,dz_N=
\int_{\mathbb R}\int_{\mathbb R^{N-1}}\varphi(z',t)\,dz'dD\widetilde q(t).
\end{align*}
Using this, we conclude that, for any $i\in\{1,\ldots, N-1\}$,
\begin{align}\label{eq:dou2}
(D_i\widetilde W_q)\llcorner V=0\ \ \text{and}\ \ (D_N\widetilde W_q)\llcorner V
=(\mathcal L^{N-1}\otimes D\widetilde q)\llcorner V,
\end{align}
where $\mathcal L^{N-1}\otimes D\widetilde q$ denotes the product measure of $\mathcal L^{N-1}$ and $D\widetilde q.$
Consequently, for any Borel sets $A'\subset\mathbb R^{N-1}$ and $B\subset\mathbb R$ satisfying
$A'\times B\subset V$, we have
\begin{align*}
|D\widetilde W_q|(A'\times B)=\mathcal L^{N-1}(A')\,|D\widetilde q|(B).
\end{align*}
From this and \eqref{eq:colend},  it follows that
\begin{align}\label{eq:colzero}
|D\widetilde W_q|(\partial Q_0)=0.
\end{align}
Applying Lemma \ref{lem:localbv} and using the fact that $0\leq W_q\leq 1$, we
conclude that, for any $\rho,\lambda\in (0,\infty),$
\begin{align*}
&\lambda\int_{B_\rho}\int_{Q_0}\mathbf 1_{\left\{(x,y)\in B_\rho\times Q_0:\ x\neq y,\
\frac{|W_q(x)-W_q(y)|}{|x-y|^{\gamma+1}}\geq\lambda\right\}}|x-y|^{\gamma-N}\,dy\,dx\notag\\
&\quad\leq\lambda\int_{B_\rho}\int_{B(x,R_\lambda)}
\mathbf 1_{\left\{x\in B_\rho,\ y\in B(x,R_\lambda):\ x\neq y,\
\frac{|\widetilde W_q(x)-\widetilde W_q(y)|}{|x-y|^{\gamma+1}}\geq\lambda\right\}}
|x-y|^{\gamma-N}\,dy\,dx\lesssim|D\widetilde W_q|\left(B_\rho^{(3R_\lambda)}\right),
\end{align*}
where $R_\lambda:=\lambda^{-\frac{1}{\gamma+1}}$
and $B_\rho^{(3R_\lambda)}:= \{x\in\mathbb{R}^N: \operatorname{dist}(x, B_\rho)<3R_\lambda\}.$
Letting $\lambda\to\infty$ and $\rho\to 0$ and using \eqref{eq:colzero}, we find that
\eqref{eq:collar} holds. This completes the proof of Lemma \ref{lem:collar}.
\end{proof}

\begin{lemma}\label{lem:skeleton}
Let $\gamma\in(0,\infty)$, $L\in\mathbb N$, and $\mathscr Q:=\{Q_1,\ldots,Q_L\}$ be a finite family of pairwise disjoint open cubes contained in $Q_0$
and define
\begin{align*}
S_{\mathscr Q}:=\overline{Q_0\setminus\left(\bigcup_{i=1}^LQ_i\right)}.
\end{align*}
Let $ U\in BV(Q_0)\cap L^\infty(Q_0)$ and assume that $U$ admits an extension
$\widetilde U\in BV(\mathbb R^N)\cap L^\infty(\mathbb R^N)$
such that $\widetilde U=U$ almost everywhere in $Q_0$ and
\begin{align}\label{eq:skelzero}
|D\widetilde U|(S_{\mathscr Q})=0.
\end{align}
For $\lambda\in(0,\infty)$, define
\begin{align*}
 G_{\lambda,1,\gamma}^{\rm cross}(U;\mathscr Q):=\lambda\sum_{\substack{1\leq i,j\leq L\\i\neq j}}
\int_{Q_i}\int_{Q_j}\mathbf 1_{\left\{(x,y)\in Q_i\times Q_j:\ x\neq y,\
\frac{|U(x)-U(y)|}{|x-y|^{\gamma+1}}\geq\lambda\right\}}
|x-y|^{\gamma-N}\,dy\,dx.
\end{align*}
Then
\begin{align}\label{eq:skeleton}
\lim_{\lambda\to\infty}G_{\lambda,1,\gamma}^{\rm cross}(U;\mathscr Q)=0.
\end{align}
\end{lemma}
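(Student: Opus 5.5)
The plan is to reduce the cross-interaction energy to a thin neighbourhood of the skeleton $S_{\mathscr Q}$ and then apply Lemma \ref{lem:localbv} to the extension $\widetilde U$.

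\textbf{Step 1: interacting pairs are short and cross the skeleton.} Set $M:=\|\widetilde U\|_{L^\infty(\mathbb R^N)}$ and $R_\lambda:=(2M/\lambda)^{\frac1{\gamma+1}}$; we may assume $M>0$. Take a pair $(x,y)\in Q_i\times Q_j$ with $i\neq j$ that contributes to $G^{\rm cross}_{\lambda,1,\gamma}(U;\mathscr Q)$, i.e. with $|U(x)-U(y)|\ge\lambda|x-y|^{\gamma+1}$. Since $|U(x)-U(y)|\le 2M$, we get $|x-y|\le R_\lambda$. The cubes are open and pairwise disjoint, so $x$ and $y$ lie in different connected components of $\bigcup_i Q_i$. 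Hence the segment $[x,y]$ must meet $Q_0\setminus\bigcup_iQ_i\subset S_{\mathscr Q}$. Consequently $\operatorname{dist}(x,S_{\mathscr Q})\le R_\lambda$, so $x\in E_\lambda:=S_{\mathscr Q}^{(2R_\lambda)}$, a bounded Borel set.

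\textbf{Step 2: apply Lemma \ref{lem:localbv}.} Enlarge the domain of integration to $x\in E_\lambda$ and $y\in B(x,2R_\lambda)$, and replace $U$ by $\widetilde U$; this is allowed because $\widetilde U=U$ a.e. in $Q_0$. Lemma \ref{lem:localbv}, applied to $v:=\widetilde U$, $E:=E_\lambda$, $r:=2R_\lambda$, gives
\begin{align*}
G^{\rm cross}_{\lambda,1,\gamma}(U;\mathscr Q)\le C_{N,\gamma}\,|D\widetilde U|\left(S_{\mathscr Q}^{(8R_\lambda)}\right).
\end{align*}

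\textbf{Step 3: let $\lambda\to\infty$.} Then $R_\lambda\downarrow0$. The sets $S_{\mathscr Q}^{(8R_\lambda)}$ are open, decreasing, and have intersection equal to $S_{\mathscr Q}$, because $S_{\mathscr Q}$ is compact. Since $|D\widetilde U|$ is a finite measure, continuity from above gives $|D\widetilde U|(S_{\mathscr Q}^{(8R_\lambda)})\to|D\widetilde U|(S_{\mathscr Q})=0$ by \eqref{eq:skelzero}. This yields \eqref{eq:skeleton}.

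The main obstacle is the geometric claim in Step 1, that a segment joining points of two different cubes must pass through $S_{\mathscr Q}$. This follows from connectedness of $[x,y]$: its intersections with the open sets $Q_i$ and $\bigcup_{k\neq i}Q_k$ are disjoint, relatively open and both nonempty, so they cannot cover $[x,y]$. Two further points need care. First, the a.e. identification of $U$ with $\widetilde U$ must be carried inside the integrals. Second, $E_\lambda$ may stick out of $Q_0$. This is harmless because Lemma \ref{lem:localbv} is posed on all of $\mathbb R^N$ and we use the extension $\widetilde U$.
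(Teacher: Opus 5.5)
Your proof is correct and follows the paper's argument: bound $|x-y|$ through the $L^\infty$ norm, find a point of $[x,y]\cap S_{\mathscr Q}$ to place $x$ near the skeleton, apply Lemma \ref{lem:localbv} to $\widetilde U$, and let $\lambda\to\infty$ using \eqref{eq:skelzero}. Replacing $R_\lambda$ by $2R_\lambda$ also repairs a small strict-versus-non-strict inequality slip in the paper's inclusions $x\in S_{\mathscr Q}^{(R_\lambda)}$ and $y\in B(x,R_\lambda)$, and your connectedness and continuity-from-above remarks fill in steps the paper leaves implicit.
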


\begin{proof}
Without loss of generality, we may assume that, for any $x\in Q_0,$ $|U(x)|\leq \|U\|_{L^\infty(Q_0)}$.
If $\|U\|_{L^\infty(Q_0)}=0$, then \eqref{eq:skeleton} holds automatically.
Assume that $\|U\|_{L^\infty(Q_0)}>0$.
Let $i,j\in\{1,\ldots,L\}$ with $i\neq j$, and fix points $x\in Q_i$ and $y\in Q_j$ such that
$|U(x)-U(y)|\geq\lambda|x-y|^{\gamma+1}.$ Then we have
\begin{align*}
\lambda|x-y|^{\gamma+1}\leq|U(x)-U(y)|\leq2\|U\|_{L^\infty(Q_0)}
\end{align*}
and hence
$|x-y|\leq R_{\lambda}$, where $R_\lambda:=\left[\frac{2\|U\|_{L^\infty(Q_0)}}{\lambda}\right]^{\frac{1}{\gamma+1}}.$
Since $Q_i$ and $Q_j$ are disjoint, it follows that there exists a point $z\in[x,y]\cap S_{\mathscr Q}.$
This implies that
\begin{align*}
\operatorname{dist}(x,S_{\mathscr Q})\leq |x-z|\leq |x-y|\leq R_\lambda.
\end{align*}
Consequently, $x\in S_{\mathscr Q}^{(R_\lambda)}$ and $y\in B(x,R_\lambda).$
Using this and applying Lemma~\ref{lem:localbv} with $E:=S_{\mathscr Q}^{(R_\lambda)}$ and $r:=R_\lambda$,
we conclude that
\begin{align*}
G_{\lambda,1,\gamma}^{\rm cross}(U;\mathscr Q)&\leq\lambda\!\int_{S_{\mathscr Q}^{(R_\lambda)}}
\!\int_{B(x,R_\lambda)}\mathbf 1_{\left\{x\in S_{\mathscr Q}^{(R_\lambda)},\ y\in B(x,R_\lambda):\
x\neq y,\
\frac{|\widetilde U(x)-\widetilde U(y)|}{|x-y|^{\gamma+1}}\geq\lambda\right\}}
|x-y|^{\gamma-N}\,dy\,dx\\
&\quad\lesssim|D\widetilde U|\left(S_{\mathscr Q}^{(4R_\lambda)}\right).
\end{align*}
Letting $\lambda\to \infty$ and using \eqref{eq:skelzero}, we find  that \eqref{eq:skeleton}
holds. This  completes the proof of Lemma \ref{lem:skeleton}.
\end{proof}

Now, we show Proposition \ref{prop:profile}.

\begin{proof}[Proof of Proposition \ref{prop:profile}]
Since $q$  is nondecreasing, it follows that $|Dq|(I_0)=q(\frac{1}{2}-)-q(-\frac{1}{2}+)$, where
$q(-\frac{1}{2}+):=\lim_{t\to -\frac{1}{2}^+}q(t)$ and $q(\frac{1}{2}-):=\lim_{t\to\frac{1}{2}^-}q(t).$
If $|Dq|(I_0)=0,$ then \eqref{eq:profile} holds  automatically. Let $|Dq|(I_0)\in (0,\infty).$ For any $t\in I_0,$ define
\begin{align*}
\widetilde q(t):=\frac{q(t)-q(-\frac{1}{2}+)}{|Dq|(I_0)}.
\end{align*}
Then $0\leq\widetilde q\leq1$ on $I_0$, $\widetilde q(-\frac{1}{2}+)=0$, and $\widetilde q(\frac{1}{2}-)=1.$
By \eqref{eq:mpscale}, we find that
\begin{align*}
 m_{1,\gamma}(W_q)=|Dq|(I_0)m_{1,\gamma}(W_{\widetilde q}).
\end{align*}
Thus, to prove \eqref{eq:profile}, it suffices to show that
\begin{align}\label{eq:profnorm}
 m_{1,\gamma}(W_{\widetilde q})\geq C_{N,1,\gamma}^{\mathrm{cell}}.
\end{align}
Fix $m\in\mathbb N$.  For any $k\in\{0,\ldots,m\}$,  let $a_k:=-\frac12+\frac{k}{m}.$
Define the function $q_m:I_0\to\mathbb R$ as follows.  For any
$k\in\{0,\ldots,m-1\}$ and any  $t\in(a_k,a_{k+1})$, let
\begin{align*}
q_m(t):=a_k+\frac{1}{m}\widetilde q\left(m(t-a_k)-\frac12\right).
\end{align*}
Moreover, for any $k\in\{1,\ldots, m-1\},$ let $q_m(a_k):=a_k.$ Then, we have, for any
$k\in\left\{0,\ldots,m-1\right\}$, $q_m(a_k+)=a_k$ and $q_m(a_{k+1}-)=a_k+\frac{1}{m}=a_{k+1}.$
This implies that, for any $k\in\{1,\ldots,m-1\},$
$Dq_m\left(\left\{a_k\right\}\right)=0$ and
\begin{align*}
|Dq_m|\left((a_k,a_{k+1})\right)=q_m(a_{k+1}-)-q_m(a_k+)=\frac{1}{m}.
\end{align*}
Consequently,
\begin{align*}
|Dq_m|(I_0)=\sum_{k=0}^{m-1}|Dq_m|\left((a_k,a_{k+1})\right)=1.
\end{align*}
Furthermore, for any $k\in \{0,\ldots,m-1\}$ and $t\in(a_k,a_{k+1})$,
\begin{align*}
|q_m(t)-t|=\frac{1}{m}\left|\widetilde q\left(m(t-a_k)-\frac12\right)-m(t-a_k)
\right|\leq\frac{1}{m},
\end{align*}
where we used the facts that $0\leq\widetilde{q}\leq 1$ and $m(t-a_k)\in (0,1)$ in the  last step.
The same estimate holds at the internal partition points.  Consequently,
\begin{align*}
\sup_{t\in I_0}|q_m(t)-t|\leq\frac{1}{m},
\end{align*}
and hence $W_{q_m}\to \ell$ in $L^1(Q_0).$ From this and Lemma \ref{lem:mplsc}, we infer that
\begin{align*}
C_{N,1,\gamma}^{\mathrm{cell}}=m_{1,\gamma}(\ell)\leq\liminf_{m\to\infty}m_{1,\gamma}(W_{q_m}).
\end{align*}
Using this, we conclude that, to prove \eqref{eq:profnorm}, it suffices to show that, for any $m\in\mathbb{N},$
\begin{align}\label{eq:zhongji}
m_{1,\gamma}(W_{q_m})\leq m_{1,\gamma}(W_{\widetilde q}).
\end{align}

We may assume that $m_{1,\gamma}(W_{\widetilde q})<\infty$, otherwise the
required comparison is immediate. Fix $\varepsilon\in (0,\infty)$.  By the definition of $m_{1,\gamma}(W_{\widetilde q})$,
we conclude that there
exist sequences $\{\lambda_j\}_{j\in\mathbb{N}}\subset (0,\infty)$ and
$\{\widetilde{v_j}\}_{j\in\mathbb{N}}\subset L^1(Q_0)$ such that $\lim_{j\to\infty}\lambda_j=\infty$,
$\lim_{j\to\infty}\widetilde{v_j}=W_{\widetilde{q}}$ in $L^1(Q_0)$, and
\begin{align}\label{eq:zuo}
\liminf_{j\to\infty}G_{\lambda_j,1,\gamma}(\widetilde{v_j};Q_0)
\leq m_{1,\gamma}(W_{\widetilde q})+\varepsilon.
\end{align}
For any $s\in\mathbb{R}$ and $j\in\mathbb{N},$
let  $ P(s):=\max\left\{0,\min\left\{1,s\right\}\right\}$ and $v_j:=P\circ \widetilde{v_j}.$
Then $0\leq P\leq 1$ and, for any $s_1,s_2\in\mathbb{R},$ $|P(s_1)-P(s_2)|\leq |s_1-s_2|.$ From
this and the fact that $0\leq W_{\widetilde{q}}\leq 1$, it follows  that, for any $j\in\mathbb{N}$,
\begin{align*}
\left\|v_j-W_{\widetilde q}\right\|_{L^1(Q_0)}=
\left\|P\circ \widetilde{v_j}-P\circ W_{\widetilde q}\right\|_{L^1(Q_0)}
\leq\left\|\widetilde{v_j}-W_{\widetilde q}\right\|_{L^1(Q_0)}.
\end{align*}
Thus, $\lim_{j\to\infty} v_j=W_{\widetilde{q}}$ in $L^1(Q_0)$.
Note that, for any $x,y\in Q_0$,
\begin{align*}
\left|P(\widetilde{v_j}(x))-P(\widetilde{v_j}(y))\right|\leq\left|\widetilde{v_j}(x)-\widetilde{v_j}(y)\right|.
\end{align*}
This implies that, for any $j\in\mathbb{N}$ and $\lambda\in (0,\infty)$,
\begin{align*}
G_{\lambda,1,\gamma}(v_j;Q_0)\leq G_{\lambda,1,\gamma}(\widetilde{v_j};Q_0).
\end{align*}
Using this and \eqref{eq:zuo}, we find that
\begin{align*}
\liminf_{j\to\infty}G_{\lambda_j,1,\gamma}(v_j;Q_0)
\leq \liminf_{j\to\infty}G_{\lambda_j,1,\gamma}(\widetilde{v_j};Q_0)
\leq m_{1,\gamma}(W_{\widetilde q})+\varepsilon.
\end{align*}
Passing to a subsequence realizing the limit inferior and relabelling, we may assume that
\begin{align}\label{eq:profnear}
\lim_{j\to\infty}G_{\lambda_j,1,\gamma}(v_j;Q_0)\leq m_{1,\gamma}(W_{\widetilde q})+\varepsilon.
\end{align}
Moreover, from the definition of $P$, we deduce that $0\leq v_j\leq 1$ for any $j\in\mathbb{N}.$
Let $\rho,$ $A_\rho$, and $C_\rho$  be as in Lemma~\ref{lem:replace}.
Apply Lemma~\ref{lem:replace} with $w:=W_{\widetilde q}$  and $\{v_j\}_{j\in\mathbb{N}}$,
we conclude that there exist sequences $\{\widetilde{\lambda_j}\}_{j\in\mathbb{N}}\subset (0,\infty)$
and $\{\widehat{v_j}\}_{j\in\mathbb{N}}\subset L^\infty (Q_0)$
satisfying  properties stated in Lemma~\ref{lem:replace}. Let
\begin{align*}
\omega_{\widetilde q}(\rho)&:=\limsup_{\lambda\to\infty}\lambda\int_{C_\rho}\int_{Q_0}
\mathbf 1_{\left\{(x,y)\in C_\rho\times Q_0:\ x\neq y,\
\frac{|W_{\widetilde q}(x)-W_{\widetilde q}(y)|}{|x-y|^{\gamma+1}}\geq\lambda\right\}}
|x-y|^{\gamma-N}\,dy\,dx.
\end{align*}
From Lemma~\ref{lem:collar}, we infer that
\begin{align}\label{eq:repairsmall}
\lim_{\rho\to 0^+}\omega_{\widetilde q}(\rho)=0.
\end{align}
By Lemma~\ref{lem:replace}(iii), the definition of $\omega_{\widetilde q}(\rho)$, and \eqref{eq:profnear}, we find that
\begin{align}\label{eq:zj2}
\limsup_{j\to\infty}G_{\widetilde\lambda_j,1,\gamma}(\widehat v_j;Q_0)&\leq
\limsup_{j\to\infty}G_{\lambda_j,1,\gamma}(v_j;A_\rho)+\limsup_{j\to\infty}G_{\lambda_j,1,\gamma}(W_{\widetilde q};C_\rho)\notag\\
&\leq m_{1,\gamma}(W_{\widetilde q})+\varepsilon+\omega_{\widetilde q}(\rho).
\end{align}
Now, we claim that there exist sequences $\{\Lambda^{(m)}_j\}_{j\in\mathbb{N}}\subset (0,\infty)$ and
$\{V_j^{(m)}\}_{j\in\mathbb{N}}\subset L^1(Q_0)$ such that $\lim_{j\to\infty}\Lambda^{(m)}_j=\infty$,
$\lim_{j\to\infty}V_j^{(m)}=W_{q_m}$ in $L^1(Q_0)$, and
\begin{align}\label{eq:claim}
m_{1,\gamma}(W_{q_m})\leq\limsup_{j\to\infty}G_{\Lambda_j^{(m)},1,\gamma}(V_j^{(m)};Q_0)
\leq m_{1,\gamma}(W_{\widetilde q})+\varepsilon+\omega_{\widetilde q}(\rho).
\end{align}
If this claim holds, then, letting $\varepsilon,\rho\to 0^+$ and using \eqref{eq:repairsmall},
we conclude that \eqref{eq:zhongji} holds, which completes the proof of the present proposition.

Now, we prove the above claim. Note that the intervals $(a_0,a_1),(a_1,a_2),\ldots,(a_{m-1},a_m)$
are pairwise disjoint and cover $I_0$ except for the partition points.
Taking their Cartesian products in the $N$ coordinate directions
gives $m^N$ pairwise disjoint open cubes that cover $Q_0$ except for
their boundaries. More precisely, let
$\mathcal A_m:=\left\{0,\ldots,m-1\right\}^N.$
For any $\alpha=(\alpha_1,\ldots,\alpha_N)\in\mathcal A_m$, define
\begin{align*}
Q_\alpha:=\prod_{i=1}^N(a_{\alpha_i},a_{\alpha_i+1}).
\end{align*}
Then the family $\mathscr Q_m:=\left\{Q_\alpha:\alpha\in\mathcal A_m\right\}$
consists of pairwise disjoint open cubes and covers $Q_0$ except for
their boundaries.  Furthermore, for any $\alpha \in\mathcal{A}_m$, the center of $Q_\alpha$ is
\begin{align*}
c_\alpha&:=\left(\frac{a_{\alpha_1}+a_{\alpha_1+1}}{2},\ldots,\frac{a_{\alpha_N}+a_{\alpha_N+1}}{2}\right)\\
&=-\frac12(1,\ldots,1)+\frac{1}{m}\left(\alpha_1+\frac12,\ldots,\alpha_N+\frac12\right),
\end{align*}
and hence
\begin{align*}
Q_\alpha=c_\alpha+\frac{1}{m}Q_0.
\end{align*}
From the definitions of $W_{q_m}$ and $q_m$, it follows that, for any $\alpha\in \mathcal{A}_m$
with $k:=\alpha_N$ and $x\in Q_\alpha$,
\begin{align}\label{eq:juti}
W_{q_m}(x)&=q_m(x_N)=a_k+\frac{1}{m}\widetilde q\left(m(x_N-a_k)-\frac12\right)
=a_k+\frac{1}{m}\widetilde{q}\left(m\left(x_N-(c_\alpha)_N\right)\right)\notag\\
&=a_k+\frac{1}{m}\widetilde{q}\left([m(x-c_\alpha)]_N\right)
=a_k+\frac{1}{m}W_{\widetilde{q}}(m(x-c_\alpha)).
\end{align}
Let $j\in\mathbb{N}$. Replacing $W_{\widetilde q}$ in \eqref{eq:juti} by $\widehat{v}_j$,
let
\begin{align*}
V_j^{(m)}(x)
:=\begin{cases}
\displaystyle
a_{\alpha_N}+\frac{1}{m}
\widehat v_j\left(m(x-c_\alpha)\right),
&x\in Q_\alpha\text{ for some }\alpha\in\mathcal A_m,\\[2mm]
W_{q_m}(x),&\displaystyle x\in Q_0\setminus
\left(\bigcup_{\alpha\in\mathcal A_m}Q_\alpha\right).
 \end{cases}
\end{align*}
By the change of variables, we find that
\begin{align*}
\left\|V_j^{(m)}-W_{q_m}\right\|_{L^1(Q_0)}=\sum_{\alpha\in\mathcal A_m}
\int_{Q_\alpha}\left|V_j^{(m)}-W_{q_m}\right|\,dx=\frac{1}{m}
\left\|\widehat v_j-W_{\widetilde q}\right\|_{L^1(Q_0)}.
\end{align*}
Thus, $\lim_{j\to\infty}V_j^{(m)}=W_{q_m}$ in $L^1(Q_0)$. Define
$\Lambda_j^{(m)}:=m^\gamma\widetilde\lambda_j.$
From Lemma~\ref{lem:scaling}, we infer that, for any $\alpha\in\mathcal A_m$,
\begin{align*}
G_{\Lambda_j^{(m)},1,\gamma}\left(V_j^{(m)};Q_\alpha\right)=\frac{1}{m^N}
G_{\widetilde\lambda_j,1,\gamma}\left(\widehat v_j;Q_0\right).
\end{align*}
Consequently,
\begin{align}\label{eq:samecell}
\sum_{\alpha\in\mathcal A_m}G_{\Lambda_j^{(m)},1,\gamma}\left(V_j^{(m)};Q_\alpha\right)
=G_{\widetilde\lambda_j,1,\gamma}\left(\widehat v_j;Q_0\right).
\end{align}
Let
\begin{align*}
G_{\Lambda_j^{(m)},1,\gamma}^{\rm cross}\left(V_j^{(m)};\mathscr Q_m\right)
&:=\Lambda_j^{(m)}\sum_{\substack{\alpha,\beta\in\mathcal A_m\\\alpha\neq\beta}}
\int_{Q_\alpha}\int_{Q_\beta}\mathbf 1_{\left\{(x,y)\in Q_\alpha\times Q_\beta:\ x\neq y,\
\frac{\left|V_j^{(m)}(x)-V_j^{(m)}(y)\right|}{|x-y|^{\gamma+1}}
\geq\Lambda_j^{(m)}\right\}}|x-y|^{\gamma-N}\,dy\,dx.
\end{align*}
By \eqref{eq:samecell}, we conclude that
\begin{align}\label{eq:zj}
G_{\Lambda_j^{(m)},1,\gamma}\left(V_j^{(m)};Q_0\right)&\leq\sum_{\alpha\in\mathcal A_m}
G_{\Lambda_j^{(m)},1,\gamma}\left(V_j^{(m)};Q_\alpha\right)+ G_{\Lambda_j^{(m)},1,\gamma}^{\rm cross}
\left(V_j^{(m)};\mathscr Q_m\right)\notag\\
&=G_{\widetilde\lambda_j,1,\gamma}\left(\widehat v_j;Q_0\right)+ G_{\Lambda_j^{(m)},1,\gamma}^{\rm cross}
\left(V_j^{(m)};\mathscr Q_m\right).
\end{align}
For any $\alpha\in\mathcal A_m$, define
\begin{align*}
Q_{\alpha,\rho}:=\left\{x\in Q_\alpha:\operatorname{dist}(x,\partial Q_\alpha)>\frac{\rho}{m}\right\}.
\end{align*}
From Lemma~\ref{lem:replace}(i), we deduce that
\begin{align}\label{eq:tiledcollar}
V_j^{(m)}=W_{q_m}\quad\text{on }Q_\alpha\setminus Q_{\alpha,\rho}.
\end{align}
Since $\Lambda_j^{(m)}\to\infty$, it follows that
\begin{align*}
\left(\Lambda_j^{(m)}\right)^{-\frac{1}{\gamma+1}}<\frac{\rho}{m}
\end{align*}
for any sufficiently large $j$.  Let $j$ be any such index, let
$\alpha,\beta\in\mathcal A_m$ satisfy $\alpha\neq\beta$, and let
$(x,y)\in Q_\alpha\times Q_\beta$  satisfy
\begin{align*}
\left|V_j^{(m)}(x)-V_j^{(m)}(y)\right|\geq\Lambda_j^{(m)}|x-y|^{\gamma+1}.
\end{align*}
By the fact that $-\frac12\leq V_j^{(m)}\leq\frac12$, we find that
\begin{align*}
|x-y|\leq\left(\Lambda_j^{(m)}\right)^{-\frac{1}{\gamma+1}}<\frac{\rho}{m}.
\end{align*}
If $x\in Q_{\alpha,\rho}$, then $y\notin Q_\alpha$ and hence
\begin{align*}
|x-y|\geq\operatorname{dist}(x,\partial Q_\alpha)>\frac{\rho}{m},
\end{align*}
which is impossible.  Thus $x\in Q_\alpha\setminus Q_{\alpha,\rho}$. Applying
the same argument, we have $y\in Q_\beta\setminus Q_{\beta,\rho}$.
From this and \eqref{eq:tiledcollar}, we infer that
$V_j^{(m)}(x)=W_{q_m}(x)$ and $V_j^{(m)}(y)=W_{q_m}(y).$ Consequently,
\begin{align*}
&\left\{(x,y)\in Q_\alpha\times Q_\beta:\ x\neq y,\
\frac{\left|V_j^{(m)}(x)-V_j^{(m)}(y)\right|}{|x-y|^{\gamma+1}}
\geq\Lambda_j^{(m)}\right\}\\&\quad\subset
\left\{(x,y)\in Q_\alpha\times Q_\beta:\ x\neq y,\
\frac{\left|W_{q_m}(x)-W_{q_m}(y)\right|}{|x-y|^{\gamma+1}}
\geq\Lambda_j^{(m)}\right\},
\end{align*}
which implies that, for any sufficiently large $j$.
\begin{align}\label{eq:cross}
G_{\Lambda_j^{(m)},1,\gamma}^{\rm cross}\left(V_j^{(m)};\mathscr Q_m\right)\leq
G_{\Lambda_j^{(m)},1,\gamma}^{\rm cross}\left(W_{q_m};\mathscr Q_m\right).
\end{align}
Define $q_m^{\rm ext}:\mathbb R\to\mathbb R$ by setting, for any $t\in\mathbb R$,
\begin{align*}
q_m^{\rm ext}(t):=
\begin{cases}
-\dfrac{1}{2}, &t\in(-\infty,-\frac{1}{2}],\\[2mm]
q_m(t),&t\in(-\frac{1}{2},\frac{1}{2}),\\[2mm]
\dfrac{1}{2},&t\in[\frac{1}{2},\infty).
 \end{cases}
\end{align*}
Obviously, $q_m^{\rm ext}$ is nondecreasing and, for any $k\in\{0,\ldots,m\},$
\begin{align}\label{eq:qmnoatoms}
Dq_m^{\rm ext}\left(\left\{a_k\right\}\right)=0.
\end{align}
Choose $\chi\in C_{\rm c}^\infty(\mathbb R^N)$ such that $\chi=1$ on a open
neighborhood $U$ of $\overline{Q}_0$. For any $z\in\mathbb{R}^N,$ let
\begin{align*}
W_m^{\rm ext}(z):=\chi(z)q_m^{\rm ext}(z_N).
\end{align*}
Then $W_m^{\rm ext}\in BV(\mathbb R^N)\cap L^\infty(\mathbb R^N)$ and
$W_m^{\rm ext}=W_{q_m}$ on $Q_0$.
By an argument similar to that  used in \eqref{eq:dou2}, we conclude that, for any $i\in\{1,\ldots, N-1\}$,
\begin{align*}
(D_iW_m^{\rm ext})\llcorner U=0\ \ \text{and}\ \ (D_NW_m^{\rm ext})\llcorner U
=\left(\mathcal L^{N-1}\otimes Dq_m^{\rm ext}\right)\llcorner U.
\end{align*}
This implies that
\begin{align*}
|DW_m^{\rm ext}|\llcorner U=\left(\mathcal L^{N-1}\otimes|Dq_m^{\rm ext}|\right)\llcorner U.
\end{align*}
and hence, for any Borel sets $A'\subset\mathbb R^{N-1}$ and
$B\subset\mathbb R$ satisfying $A'\times B\subset U$,
\begin{align*}
|DW_m^{\rm ext}|(A'\times B)=\mathcal L^{N-1}(A')|Dq_m^{\rm ext}|(B).
\end{align*}
From this and \eqref{eq:qmnoatoms}, we deduce that
\begin{align*}
|DW_m^{\rm ext}|(S_{\mathscr Q_m})=0.
\end{align*}
Thus, all  assumptions of Lemma~\ref{lem:skeleton} are satisfied. Using this and
applying Lemma~\ref{lem:skeleton}  with $U:=W_{q_{m}}$
and $\mathscr Q:=\mathscr Q_m$, we find that
\begin{align*}
\lim_{\lambda\to\infty}G_{\lambda,1,\gamma}^{\rm cross}(W_{q_m};\mathscr Q_m)=0
\end{align*}
This, combined with \eqref{eq:cross}, implies that
\begin{align*}
\lim_{j\to\infty} G_{\Lambda_j^{(m)},1,\gamma}^{\rm cross}\left(V_j^{(m)};\mathscr Q_m\right)
=\lim_{j\to\infty}G_{\Lambda_j^{(m)},1,\gamma}^{\rm cross}\left(W_{q_m};\mathscr Q_m\right)=0.
\end{align*}
From this, the definition of $m_{1,\gamma}(W_{q_m})$, \eqref{eq:zj}, and \eqref{eq:zj2}, we infer that
\begin{align*}
m_{1,\gamma}\left(W_{q_m}\right)&\leq\limsup_{j\to\infty}G_{\Lambda_j^{(m)},1,\gamma}\left(V_j^{(m)};Q_0\right)\\
&\leq\limsup_{j\to\infty}\left[G_{\widetilde\lambda_j,1,\gamma}\left(\widehat v_j;Q_0\right)+ G_{\Lambda_j^{(m)},1,\gamma}^{\rm cross}
\left(V_j^{(m)};\mathscr Q_m\right)\right]\leq m_{1,\gamma}\left(W_{\widetilde q}\right)+\varepsilon
+\omega_{\widetilde q}(\rho).
\end{align*}
This shows the claim \eqref{eq:claim}, which completes the proof of Proposition \ref{prop:profile}.
\end{proof}

\subsection{Proof of Theorem \ref{thm:oneliminf}\label{proofone}}

In this subsection, we prove Theorem \ref{thm:oneliminf} by using Proposition \ref{prop:profile}
and Lemma \ref{lem:tangent}.

\begin{proof}[Proof of Theorem \ref{thm:oneliminf}]
Let $L:=\liminf_{j\to\infty}G_{\lambda_j,1,\gamma}(u_j;\Omega).$
If $L=\infty$, then \eqref{eq:oneliminf} holds automatically. Assume that $L<\infty$.
After passing to a subsequence and relabeling, we may assume that $G_{\lambda_j,1,\gamma}(u_j;\Omega)\to L.$
By Lemma~\ref{lem:gp}, we conclude that $u\in BV(\Omega)$.
If $|Du|(\Omega)=0$, then \eqref{eq:oneliminf} follows from the
nonnegativity of $G_{\lambda_j,1,\gamma}$. Hence we assume that $|Du|(\Omega)>0$.

Let $A_1\subset\Omega$ be a Borel set with
$|Du|(\Omega\setminus A_1)=0$ such that Lemma~\ref{lem:tangent}
holds at every point of $A_1$. Fix $x_0\in A_1$ and let $k\in\mathbb N$. For any $z\in Q_0$, let
\begin{align*}
Q_k:=Q_{r_k}^{R_{x_0}}(x_0),\ \ A_k:=\frac{|Du|(Q_k)}{r_k^{N-1}},
\ \ b_k:=\fint_{Q_k}u\,dy,
\end{align*}
and
\begin{align*}
v_k(z):=\frac{u(x_0+r_kR_{x_0}z)-b_k}{A_k}.
\end{align*}
By Lemma~\ref{lem:tangent}(i), we conclude that $Q_k\subset\Omega$ and
$|Du|(Q_k)\in(0,\infty)$.
For any $z\in Q_0$,  define
\begin{align*}
v_{j,k}(z):=\frac{u_j(x_0+r_kR_{x_0}z)-b_k}{A_k}\ \ \text{and}\ \
\Lambda_{j,k}:=\frac{\lambda_jr_k^{\gamma+1}}{A_k}.
\end{align*}
Then $\lim_{j\to\infty}v_{j,k}=v_k$ in $L^1(Q_0)$ and
$\lim_{j\to\infty}\Lambda_{j,k}=\infty$. Using Lemma~\ref{lem:scaling} with $p:=1$,
$a:=\frac{A_k}{r_k}$, $b:=b_k$, and $w:=v_{j,k}$, we find that
\begin{align}\label{eq:localscale}
G_{\lambda_j,1,\gamma}(u_j;Q_k)=\frac{A_k}{r_k}r_k^NG_{\frac{\lambda_jr_k^{\gamma+1}}{A_k},1,\gamma}
(v_{j,k};Q_0)=|Du|(Q_k)G_{\Lambda_{j,k},1,\gamma}(v_{j,k};Q_0).
\end{align}
Using \eqref{eq:localscale} and the definition of $m_{1,\gamma}$, we conclude that
\begin{align}\label{eq:localone}
\liminf_{j\to\infty}G_{\lambda_j,1,\gamma}(u_j;Q_k)
&= |Du|(Q_k)\liminf_{j\to\infty}G_{\Lambda_{j,k},1,\gamma}(v_{j,k};Q_0)\notag\\
&\geq |Du|(Q_k)m_{1,\gamma}(v_k).
\end{align}
Furthermore, by Lemma~\ref{lem:tangent}, we conclude that
$v_k\to W_{q_{x_0}}$ in $L^1(Q_0)$ and $|Dq_{x_0}|(I_0)=1$.
This, together with Lemma~\ref{lem:mplsc} and Proposition \ref{prop:profile}, implies that
\begin{align}
\liminf_{k\to\infty}m_{1,\gamma}(v_k)\geq m_{1,\gamma}(W_{q_{x_0}})
\geq C_{N,1,\gamma}^{\mathrm{cell}}|Dq_{x_0}|(I_0)=C_{N,1,\gamma}^{\mathrm{cell}}.\label{eq:mtangent}
\end{align}
Fix $\varepsilon\in(0,\frac{C_{N,1,\gamma}^{\mathrm{cell}}}{2})$.
Using
\eqref{eq:mtangent}, we conclude  that there exists
$k_\varepsilon(x_0)\in\mathbb{N} $ such that, for any
$k\in\mathbb N\cap(k_\varepsilon(x_0),\infty)$,
\begin{align*}
m_{1,\gamma}(v_k)
\geq C_{N,1,\gamma}^{\mathrm{cell}}-\varepsilon.
\end{align*}
From this
and \eqref{eq:localone}, we deduce that, for any
$k\in\mathbb N\cap(k_\varepsilon(x_0),\infty)$,
\begin{align}\label{eq:onelocal}
\liminf_{j\to\infty}G_{\lambda_j,1,\gamma}(u_j;Q_k)
\geq(C_{N,1,\gamma}^{\mathrm{cell}}-\varepsilon)|Du|(Q_k).
\end{align}
Now, define
\begin{align*}
V_{\varepsilon}:=\left\{Q_{r_k}^{R_x}(x):x\in A_1,\ k>k_\varepsilon(x)\right\}.
\end{align*}
Then, applying Lemma~\ref{lem:morse} to $(|Du|, \Omega, V_\varepsilon)$,
we further find that there exists a countable subfamily
$\{Q_i\}_{i\in\mathcal I}\subset V_\varepsilon$ of pairwise disjoint cubes such that
\begin{align*}
|Du|\left(A_1\setminus\left(\bigcup_{i\in\mathcal I}Q_i\right)\right)=0.
\end{align*}
This, combined with the fact that $|Du|(\Omega\setminus A_1)=0$, implies
\begin{align*}
|Du|\left(\Omega\setminus\left(\bigcup_{i\in\mathcal I}Q_i\right)\right)=0.
\end{align*}
Since $|Du|(\Omega)<\infty$, we can choose a finite subfamily of $\{Q_i\}_{i\in\mathcal I}$,
relabeled as $Q_1,\ldots,Q_M$, such that
\begin{align*}
|Du|\left(\Omega\setminus\left(\bigcup_{i=1}^M Q_i\right)\right)<\varepsilon.
\end{align*}
Using this and \eqref{eq:onelocal}, we obtain that
\begin{align*}
\liminf_{j\to\infty}G_{\lambda_j,1,\gamma}(u_j;\Omega)
&\geq\liminf_{j\to\infty}\sum_{i=1}^M G_{\lambda_j,1,\gamma}(u_j;Q_i)\\
&\geq(C_{N,1,\gamma}^{\mathrm{cell}}-\varepsilon)\sum_{i=1}^M|Du|(Q_i)
>(C_{N,1,\gamma}^{\mathrm{cell}}-\varepsilon)(|Du|(\Omega)-\varepsilon).
\end{align*}
Letting $\varepsilon\to 0$, we find that \eqref{eq:oneliminf} holds.
This completes the
proof of Theorem \ref{thm:oneliminf}.
\end{proof}

\smallskip
\noindent\textbf{Acknowledgements}\quad
The authors acknowledge the use of AI tools during the exploratory stage of this project.
All mathematical arguments and proofs in the final manuscript were checked
and written by the authors.

\bigskip

\noindent
Xiaosheng Lin

\smallskip

\noindent
School of Mathematical Sciences, Jimei University,
Xiamen 361005, The People's Republic of China

\smallskip

\noindent {\it E-mail}: \texttt{xslin@jmu.edu.cn}

\bigskip

\noindent Dachun Yang, Wen Yuan and Yangyang Zhang

\smallskip

\noindent Laboratory of Mathematics and Complex Systems
(Ministry of Education of China),
School of Mathematical Sciences, Institute for Advanced Study,
Beijing Normal University,
Beijing 100875, The People's Republic of China

\smallskip

\noindent{\it E-mails:} \texttt{dcyang@bnu.edu.cn} (D. Yang)

\noindent\phantom{{\it E-mails:}} \texttt{wenyuan@bnu.edu.cn} (W. Yuan)

\noindent\phantom{{\it E-mails:}} \texttt{yangyzhang@bnu.edu.cn} (Y. Zhang)

\bigskip

\noindent Sibei Yang

\medskip

\noindent School of Mathematics and Statistics, Lanzhou University, Lanzhou 730000, The People's Republic of China

\smallskip

\noindent{\it E-mail:} \texttt{yangsb@lzu.edu.cn}

\end{document}